
\documentclass[10pt]{amsart}
\usepackage{xcolor}
\usepackage{amsmath,amsthm,amssymb,latexsym,soul,cite,mathrsfs}
\usepackage{enumitem}
\pretolerance=10000


\usepackage{color,enumitem,graphicx}
\usepackage[colorlinks=true,urlcolor=blue,
citecolor=red,linkcolor=blue,linktocpage,pdfpagelabels,
bookmarksnumbered,bookmarksopen]{hyperref}
\usepackage[english]{babel}

\usepackage[left=2.7cm,right=2.7cm,top=2.5cm,bottom=2.5cm]{geometry}



\numberwithin{equation}{section}

\theoremstyle{plain}
\newtheorem{theorem}{Theorem}[section]
\newtheorem{theoremletter}{Theorem}

\newtheorem{lemma}[theorem]{Lemma}

\newtheorem{corollary}[theorem]{Corollary}
\newtheorem{proposition}[theorem]{Proposition}

\theoremstyle{remark}
\newtheorem{example}[theorem]{Example}
\newtheorem{remark}[theorem]{Remark}
\theoremstyle{definition}

\newcommand{\dx}{\,\mathrm{d}x}

\newcommand{\dxi}{\,\mathrm{d}\xi}
\newcommand{\dtau}{\,\mathrm{d}\tau}
\newcommand{\dxdy}{\,\mathrm{d}x\mathrm{d}y}

\newcommand{\vertiii}[1]{{\left\vert\kern-0.25ex\left\vert\kern-0.25ex\left\vert #1 
    \right\vert\kern-0.25ex\right\vert\kern-0.25ex\right\vert}}

\newcommand{\dive}{\mathrm{div}}
\newcommand{\rad}{\mathrm{rad}}
\DeclareMathOperator{\supp}{supp}

\DeclareMathOperator{\sgn}{sgn}
\newcommand{\loca}{\operatorname{loc}}

\makeatletter
\newcommand{\leqnomode}{\tagsleft@true}
\newcommand{\reqnomode}{\tagsleft@false}
\makeatother
\usepackage[norefs,nocites]{refcheck}
\title[Fractional Schr\"{o}dinger equations]{Concentration-compactness at the mountain pass level for nonlocal Schr\"{o}dinger equations}

\author[J.M.\ do \'O]{Jo\~ao Marcos do \'O}
\author[D.~Ferraz]{Diego Ferraz}

\address[J.M. do \'O]{Department of Mathematics,
Federal University of Para\'{\i}ba
\newline\indent
58051-900, Jo\~ao Pessoa-PB, Brazil}
\email{\href{mailto:jmbo@pq.cnpq.br}{jmbo@pq.cnpq.br}}

\address[D.~Ferraz]{Department of Mathematics,
	Federal University of Para\'{\i}ba
	\newline\indent
	58051-900, Jo\~ao Pessoa-PB, Brazil}
\email{\href{mailto:diego.ferraz.br@gmail.com}{diego.ferraz.br@gmail.com}}

\subjclass[2000]{35P15, 35P30, 35R11}
\keywords{Fractional Schr\"{o}dinger equation, Fractional Laplacian, Concentration-compactness, Critical exponents}

\begin{document}

\begin{abstract}
The current paper is dedicated to the theory of concentration compactness principles for inhomogeneous fractional Sobolev spaces.
This subject for the local case has been studied since the publication of celebrated works due to P.-L. Lions, which laid the broad foundations of the method and outlined a wide scope of its applications. Our study is based on the analysis of the profile decomposition for the weak convergence following the approach of dislocation spaces, introduced by K. Tintarev and K.-H Fieseler. As an application we obtain existence of nontrivial and nonnegative solutions and ground states of fractional Schr\"{o}dinger equations for a wide class of possible singular potentials, not necessarily bounded away from zero. We consider possible oscillatory nonlinearities for both cases, subcritical and critical which are superlinear at origin, without the classical Ambrosetti and Rabinowitz growth condition. In some of our results we prove existence of solutions by means of compactness of Palais-Smale sequences of the associated functional at the mountain pass level. To this end we study and provide the behavior of the weak profile decomposition convergence under the related functionals.  Moreover, we use a Pohozaev type identity in our argument to compare the minimax levels of the energy functional with the ones of the associated limit problem. Motivated by this fact, in our work we also proved that this kind of identities hold for a larger class of potentials and nonlinearities for the fractional framework.
\end{abstract}
\maketitle



\section{Introduction}
The main goal of the present work is to analyze concentration-compactness principles for inhomogeneous fractional Sobolev spaces. As an application we address questions on the existence of solutions for the following nonlocal Schr\"{o}dinger equation
\begin{equation}\label{P}
(-\Delta)^{s} u +a(x)u= f(x,u)\quad\text{in }\mathbb{R}^N,
\tag{$\mathcal{P}_{s}$}
\end{equation}
where 
$0<s<1$ and $(-\Delta)^{s}$ is the fractional Laplacian (see \cite{silvestre,hitchhiker} for more details).

During the past years there has been a considerable amount of research on nonlinear elliptic problems involving fractional Laplacian motivated from the fact this class of problems arise naturally in several branches of mathematical physics. For instance, solutions of \eqref{P}, can be seen as stationary states (corresponding to solitary waves) of nonlinear Schr\"{o}dinger equations of the form $i \phi _t - (- \Delta) ^s \phi +a(x)\phi +f(x,\phi)=0\quad\text{in } \mathbb{R}^N.$ For more motivation we refer to \cite{levy1}.

This paper is motivated by recent advances in the study of existence of solutions for nonlinear and nonlocal Schr\"{o}dinger field equations. In \cite{secchi} S. Secchi investigated the existence of ground state solutions for fractional Schr\"{o}dinger equations by using a minimization argument on the Nehari manifold. He proved existence results under suitable assumptions on the behavior of the potential $a(x)$ and superlinear growth conditions on the nonlinearity. See also \cite{feng}, where B. Feng proved the existence of ground state solutions of \eqref{P}, for the particular case $f(x,t) = |t|^{p-2}t,$ $2<p<2(N+2s)/N ,$ $N \geq 2,$ by using the P.-L. Lions concentration-compactness principle (see \cite{lionscompcase2}). R. Lehrer et al. \cite{lehrer} studied the existence of solutions through projection over an appropriated Pohozaev manifold, assuming that $f(x,t) = a(x)f_0(t),$ where $f_0(t)$ is asymptotically linear that is, $ \lim_{t\rightarrow \infty}f_0(t) / t =1$ and $\lim_{|x|\rightarrow \infty} a(x) = a_\infty>0.$ For the local case ($s=1$), R. de Marchi \cite{reinaldo} studied existence of nontrivial solutions for \eqref{P} assuming that $a(x)$ and $f(x,t)$ are asymptotically $\mathbb{Z}^N$--periodic combining variational methods and concentration-compactness principle. 
In \cite{reinaldo} it was also proved the existence of ground states,  without assuming that $t \mapsto f(x,t)t^{-1}$ is an increasing function. By using a similar approach H. Zhang et al \cite{hzhang} studied existence of ground states and infinitely many geometrically distinct solutions of Eq. \eqref{P}, based on the method of Nehari manifold and Lusternik-Schnirelmann category theory. For recent works on nonlinear Schr\"{o}dinger equations where the Ambrosetti-Rabinowitz condition is not required we cite \cite{lehrer,hzhang,reinaldo}. See also the recent work due to A. Ambrosetti et al. \cite{malchiodi_GS}, where the existence of ground states with potentials vanishing at infinity $a(x)$ and $K(x),$ where $ f(x,t) = K(x) t^{p-1}$ and $2<p<2N/(N-2),$ was studied.

Problems involving potentials bounded away from zero and \textit{critical Sobolev exponent}, precisely, when $f(x,t)=g(x,t)+|t|^{2_s^\ast-2}t,$ $2_s^\ast = 2N/(N-2s),$ where $g(x,t)$ have subcritical growth, we may refer to \cite{shang, shang_gs, liding} and the references given there. In these works, it was crucial the presence of perturbation $g(x,t)$ of the critical power $|t|^{2_s^\ast-2}t.$ Moreover, it was assumed the following condition on the potential $0<\inf_{x \in \mathbb{R}^N} a(x) < \liminf _{|x| \rightarrow \infty} a (x)$ which was introduced by P.L. Rabinowitz in \cite{Rabi1992} to study the local case of Eq. \eqref{P} (see also for the critical case \cite{miyagaki}). We cite \cite{tintahardy, deng,smets} for works on local Schr\"{o}dinger equations with nonlinearities of the pure critical power type (without subcritical perturbation term) and inverse square type potentials. For the fractional case we cite \cite{peral}, where it was studied the existence and qualitative properties of positive solutions.

Motivated by the above works, we study existence of nontrival solutions for Eq. \eqref{P} in several cases, which were not considered by the aforementioned papers. Our potential $a(x)$ may change sign, can have singular points of blow up and even vanish, and the nonlinearity can be considered with subcritical or critical oscillatory growth. We prove some of our existence results by means of compactness of Palais-Smale sequences (for short PS sequences) of the associated functional at the mountain pass level.

In the subcritical case we assume a condition on $a(x)$ which ensures the continuous embedding of the associated space of functions similar to \cite{sirakov}. Nevertheless differently from \cite{sirakov}, we do not impose assumption on $a(x)$ to guarantee the compactness of the Sobolev embedding. To compensate, we ask that the limit of $a(x),$ as $|x|$ goes to infinity, exists and is positive, or alternatively, that $a(x)$ is $\mathbb{Z}^N$--periodic. Moreover, by considering similar assumptions made in \cite{manasses}, the potential does not need to be bounded from below by a constant. We also take account the case where the nonlinearity has oscillatory behavior and does not satisfy the typical assumption of Ambrosetti-Rabinowitz. Similar to \cite{reinaldo}, the nonlinearity $f(x,t)$ is supposed to has a periodic asymptote $f_\mathcal{P}(x,t)$, which allows us to ``transfer'' the usual assumptions to it. Also we mention that we complement and improve some results in \cite{reinaldo}, since we consider the fractional case and we do not require the monotonicity of $t \mapsto f_\mathcal{P}(x,t)t^{-1}.$


In the critical case, inspired in some ideas contained in \cite{tintahardy}, we treated in this work a class of potentials somehow different, since we consider a general class that include as a particular case the inverse fractional square potential $a(x)=-\lambda |x|^{-2s},$ where $0<\lambda < \Lambda _{N,s}$ and $\Lambda _{N,s}$ is the sharp constant of the Hardy-Sobolev inequality,
\begin{equation}\label{par_hardy}
\Lambda _{N,s} \int _{\mathbb{R}^N}|x|^{-2s}u^2 \dx \leq \int _{\mathbb{R}^N} |\xi |^{2s} \left |\mathscr{F} u \right| ^2 \dxi,\quad\forall \, u \in C^\infty _0 (\mathbb{R}^N),
\end{equation}
where $\Lambda _{N,s}$ is the sharp constant of this inequality. Further details about \eqref{par_hardy} can be found in \cite{hardysharp}. 
Here we consider self-similar nonlinearities which generalize the idea of oscillations about the critical power $|t|^{2_s ^\ast-2}t,$ turning the approach by variational methods more involved. This class of functions were introduced in \cite{paper1} and for the local case in \cite{tinta_pos,tintabook,tintapaper}.

We are able to avoid the monotonicity of $t \mapsto f_\mathcal{P}(x,t)t^{-1}$ by comparing the mini\-max level of the associated energy functional of Eq. \eqref{P} with the one of the associated limit problem. To this end we use a Pohozaev type identity and an appropriated concentration-compactness principle. The proof of this identity is essentially based in the use of the so called $s$-harmonic extension introduced by L. Caffarelli and L. Silvestre \cite{caf_silv} and remarks contained in \cite{moustapha} and \cite{frac_niremberg}, which allow us to ``transform'' the nonlocal problem \eqref{P} in a local one, from this we may apply a truncation argument. Our method of proof is more general than the usual one, in the sense that in our argument we do not have to study the behavior of solutions in the whole space $\mathbb{R}^N;$ and we also can consider singular potentials (see Proposition \ref{pohozaev_id}).

It is worth to mention that the main difficulty to approach problem \eqref{P} using variational methods lies on the lack of compactness, which roughly speaking, originates from the invariance of $\mathbb{R}^N$ with respect to translation and dilation and, analytically, appears because of the non-compactness of the Sobolev embedding. We are able to overcome this difficulty by relying in a concentration-compactness principle by means of profile decomposition for weak convergence in inhomogeneous fractional Sobolev spaces, which can be considered as extensions of the Banach-Alaoglu theorem (see Theorem \ref{teo_tinta_frac_sub}). This kind of results were considered in various settings, for instance we may cite the ones in \cite{gerard,struwe,palatucci}. It describes how the convergence of a bounded sequence fails in the considered space. Our approach in this matter was motivated by \cite{tintacocompact} and based in the abstract version of profile decomposition in Hilbert spaces due to K. Tintarev and K.-H Fieseler \cite{tintabook}. It seems for us that this approach is more appropriated to study existence of nontrival solutions for problems like \eqref{P}, under our settings, then the usuals ones using P.-L. Lions concentration-compactness principle (see \cite[Lemma 2.2]{felmer}).

Another important goal here is the study of the existence of ground states for $\eqref{P},$ i.e., nontrivial solutions with least possible energy. We consider three cases: First when \eqref{P} is invariant under the action of translations in $\mathbb{Z}^N$ (subcritical growth), second when \eqref{P} is invariant under dilations (critical growth), and third when the monotonicity of $t \mapsto f(x,t)t^{-1}$ is considered.

This paper is organized as follows. Sect.~\ref{s_profile_decomp} is we have the description of the profile decomposition of bounded sequences in $H^s(\mathbb{R}^N)$. In Sect.~\ref{s_Application}, we give some applications of the profile decomposition to study the existence of mountain-pass type solutions of \eqref{P} for the autonomous and non-autonomous cases. In Sect.~\ref{s_Preliminaries}, we state some basic results (without prove) on the fractional Sobolev spaces. In Sect.~\ref{s_proof_profile_decomp}, we prove the abstract result stated in Sect. \ref{s_profile_decomp}. In Sect.~\ref{s_variational}, we provide a suitable variational settings to prove our main results. More precisely, we describe the limit under the profile decomposition of the PS sequences at the mountain pass level of the Lagrangian of \eqref{P}. We also prove that solutions for \eqref{P} in the autonomous case satisfy a Pohozaev type identity. Sections \ref{s_proof_GS}--\ref{s_proof_crit} are dedicated to the proof of our main results concerning the existence of mountain pass solutions of Eq. \eqref{P}.


\section{Profile Decomposition for weak convergence in fractional Sobolev spaces}\label{s_profile_decomp}
Assume $0<s<N/2$ and let $\mathcal{D}^{s,2}(\mathbb{R}^N)$ be the homogeneous fractional Sobolev space, which are defined as the completion of $C ^{\infty } _0 (\mathbb{R}^N)$ under the norm $[u]_s^2 :=  \int _{\mathbb{R}^N} |\xi|^{2s} \left |\mathscr{F} u \right| ^2 \dxi.$ It is well known that $\mathcal{D}^{s,2}(\mathbb{R}^N)$ is continuous embedded in the space $L^{2^\ast_s}(\mathbb{R}^N).$
The following result represents the results developed by K. Tintarev and K.-H Fieseler in \cite{tintabook} to obtain concentration-compactness by means of profile decomposition for bounded sequences in abstract Hilbert spaces, and was studied in \cite{paper1,palatucci} for the fractional framework.
\begin{theoremletter}\cite[Theorem 2.1]{paper1}\label{teo_tinta_frac}
	Let $(u_k) \subset \mathcal{D}^{s,2}(\mathbb{R}^N)$ be a bounded sequence, $\gamma > 1$ and $0<s<\min\{1,N/2\}.$ Then there exist $\mathbb{N}_{\ast } \subset \mathbb{N},$ disjoints sets (if non-empty) $\mathbb{N} _{0}, \mathbb{N} _{- }, \mathbb{N}_ {+} \subset \mathbb{N}, $ with $\mathbb{N}_{\ast } = \mathbb{N} _{0} \cup  \mathbb{N}_ {+} \cup \mathbb{N} _{-} $ and sequences $(w ^{(n)}) _{n \in \mathbb{N}_{\ast } } \subset \mathcal{D}^{s,2}(\mathbb{R}^N),$ $(y_k ^{(n)}) _ {k \in \mathbb{N}} \subset \mathbb{Z}^N,$ $(j _k ^{(n)}) _{k \in \mathbb{N}}  \subset \mathbb{Z},$ $n \in \mathbb{N}_{\ast },$ such that, up to a subsequence of $(u_k),$
	\begin{align}
	\qquad \qquad &\gamma ^{-\frac{N-2s}{2}j_k ^{(n)}}u_k \big(\gamma ^{- j_k ^{(n)}} \cdot + y_k^{(n)} \big)\rightharpoonup w^{(n)} \text{ as } k \rightarrow \infty,\mbox{ in } \mathcal{D}^{s,2}(\mathbb{R}^N),&\nonumber \\ 
	\qquad \qquad &\big|j_k ^{(n)} - j_k ^{(m)}\big|+\big|\gamma ^ {j_k ^{(n)}}( y^{(n)}_k - y^{(m)}_k )\big| \rightarrow \infty , \text{ as } k \rightarrow \infty,\text{ for } m \neq n,&\nonumber \\ 
	\qquad \qquad &\sum _{n \in \mathbb{N}_{\ast}}\big \| w^{(n)} \big \|^2  \leq \limsup_k \|u_k \| ^2,&\nonumber \\ 
	\qquad \qquad & u_k - \sum _{n \in \mathbb{N}_{\ast }} \gamma ^{\frac{N-2s}{2} j_k ^{(n)} } w^{(n)}\big(\gamma^{j _k ^{(n)}} ( \cdot - y_k^{(n)} ) \big) \rightarrow  0, \text{ as } k \rightarrow \infty, \text{ in } L^{2^{\ast } _s}(\mathbb{R}^{N}),&\label{seis.quatro}
	\end{align}
	and the series in \eqref{seis.quatro} converges uniformly in $k.$ Furthermore, $1 \in \mathbb{N} _0,$ $y_k ^{(1)} = 0;$ $j _k ^{(n)} = 0 $ whenever $n \in \mathbb{N}_0;$ $j_k ^{(n)} \rightarrow -\infty$ whenever $n \in \mathbb{N}_{- };$ and $j_k ^{(n)} \rightarrow +\infty$ whenever $n \in \mathbb{N}_{ +}.$
\end{theoremletter}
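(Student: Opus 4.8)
The plan is to obtain the statement from the abstract profile decomposition for bounded sequences in Hilbert spaces of Tintarev and Fieseler \cite{tintabook}, applied to the group generated by integer translations and $\gamma$-adic dilations acting on $H:=\mathcal{D}^{s,2}(\mathbb{R}^N)$, combined with the cocompactness of the embedding $\mathcal{D}^{s,2}(\mathbb{R}^N)\hookrightarrow L^{2^\ast_s}(\mathbb{R}^N)$ with respect to that group.

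First I would fix on $H$ the inner product $\langle u,v\rangle=\int_{\mathbb{R}^N}|\xi|^{2s}\mathscr{F}u\,\overline{\mathscr{F}v}\dxi$, whose norm is $[\,\cdot\,]_s$, and introduce for $(j,y)\in\mathbb{Z}\times\mathbb{Z}^N$ the operator
\[
g_{j,y}u(x):=\gamma^{\frac{N-2s}{2}j}\,u\big(\gamma^{j}(x-y)\big),\qquad
g_{j,y}^{-1}u(x)=\gamma^{-\frac{N-2s}{2}j}\,u\big(\gamma^{-j}x+y\big).
\]
A change of variables on the Fourier side shows that each $g_{j,y}$ is a unitary operator of $H$ and that $D:=\{g_{j,y}:(j,y)\in\mathbb{Z}\times\mathbb{Z}^N\}$ is a group under composition. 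The point to check is that $D$ is a group of dislocations in the sense of \cite{tintabook}: if $(g_{j_k,y_k})\subset D$ satisfies $|j_k|+|y_k|\to\infty$, then $g_{j_k,y_k}v_k\rightharpoonup 0$ in $H$ whenever $v_k\rightharpoonup 0$; this follows from the explicit form of $g_{j,y}$ by testing against $C_0^\infty$ functions and using density.

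Next I would invoke the abstract theorem of \cite{tintabook}: after passing to a subsequence there are $w^{(n)}\in H$ and $g_k^{(n)}=g_{j_k^{(n)},y_k^{(n)}}\in D$, with $n$ in an at most countable index set, such that $(g_k^{(n)})^{-1}u_k\rightharpoonup w^{(n)}$, the sequences $(g_k^{(n)})$ are asymptotically orthogonal, i.e.\ $(g_k^{(n)})^{-1}g_k^{(m)}\rightharpoonup 0$ for $n\neq m$, one has $\sum_n\|w^{(n)}\|^2\le\limsup_k\|u_k\|^2$, and the remainder $r_k:=u_k-\sum_n g_k^{(n)}w^{(n)}$ converges to $0$ $D$-weakly, meaning $g_kr_k\rightharpoonup 0$ for every choice $g_k\in D$. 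The energy bound together with the asymptotic orthogonality makes $\sum_n g_k^{(n)}w^{(n)}$ convergent in $H$ uniformly in $k$. Writing out $(g_k^{(n)})^{-1}u_k$ gives the first line of the theorem, and a direct computation of $(g_k^{(n)})^{-1}g_k^{(m)}$ identifies the asymptotic orthogonality with $|j_k^{(n)}-j_k^{(m)}|+|\gamma^{j_k^{(n)}}(y_k^{(n)}-y_k^{(m)})|\to\infty$.

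Finally, to turn the $D$-weak decay of $r_k$ into strong decay in $L^{2^\ast_s}(\mathbb{R}^N)$, which is exactly \eqref{seis.quatro}, I would appeal to the cocompactness of $\mathcal{D}^{s,2}(\mathbb{R}^N)\hookrightarrow L^{2^\ast_s}(\mathbb{R}^N)$ relative to $D$ (a fractional Lions-type vanishing lemma, cf.\ \cite{palatucci,tintacocompact}): a bounded sequence that tends to $0$ $D$-weakly tends to $0$ in $L^{2^\ast_s}$. Then I would organize the indices: take $n=1$ with $g_k^{(1)}=\mathrm{Id}$ (hence $j_k^{(1)}=0$, $y_k^{(1)}=0$), so that $w^{(1)}$ is the ordinary weak limit; for every other $n$ the asymptotic orthogonality forces either $(j_k^{(n)})$ to be bounded --- in which case, passing to a further subsequence it is eventually constant and, after absorbing the constant dilation into $w^{(n)}$ and adjusting $y_k^{(n)}$, may be taken identically $0$, so $n\in\mathbb{N}_0$ --- or $|j_k^{(n)}|\to\infty$, and the two signs give $\mathbb{N}_+$ and $\mathbb{N}_-$. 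A diagonal argument over the countably many profiles produces a single subsequence for which everything holds simultaneously. I expect the real obstacle to be the cocompactness lemma: showing that $D$-weak convergence forces $L^{2^\ast_s}$-convergence requires genuine harmonic analysis --- a dyadic/Littlewood--Paley splitting of the homogeneous seminorm, or a refined fractional Sobolev inequality --- whereas the remaining steps are bookkeeping around the abstract Hilbert-space result.
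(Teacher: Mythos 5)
Your proposal follows essentially the same route as the source: the paper does not reprove Theorem \ref{teo_tinta_frac} but imports it from \cite{paper1}, and both that proof and the paper's own proof of the inhomogeneous analogue (Theorem \ref{teo_tinta_frac_sub}, Section \ref{s_proof_profile_decomp}) proceed exactly as you describe --- verify the dislocation-space axioms for the rescaling group, apply the abstract decomposition of \cite[Theorem 3.1]{tintabook}, identify asymptotic orthogonality of the dislocations by computing $(g_k^{(n)})^{-1}g_k^{(m)}$, and convert $D$-weak vanishing of the remainder into $L^{2^\ast_s}$-vanishing via a cocompactness lemma in the spirit of \cite{tintacocompact,palatucci}. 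The one inaccuracy worth flagging is your claim that $D=\{g_{j,y}:(j,y)\in\mathbb{Z}\times\mathbb{Z}^N\}$ is a group: composing gives $g_{j_1,y_1}g_{j_2,y_2}=g_{j_1+j_2,\,y}$ with a shift $y$ involving $\gamma^{-j}$-multiples of lattice points, which leaves $\mathbb{Z}^N$ for generic $\gamma>1$; the standard remedy (and what the abstract theorem actually needs) is to take the translations over $\mathbb{R}^N$, so the group structure and the dislocation axioms hold, and the lattice restriction on $y_k^{(n)}$ in the statement should be read accordingly.
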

In \cite{gerard} P. G\'{e}rard introduced this subject in a fractional framework. Theorem \ref{teo_tinta_frac} can be used to prove the fractional version of Lions concentration-compactness principle due to G.~Palatucci and A. Pisante  \cite[Theorem 5]{palatucci}. Indeed, Theorem \ref{teo_tinta_frac} improves \cite[Theorem 5]{palatucci} for the case $\Omega = \mathbb{R}^N,$ since the sums of Dirac masses that appear in this result comes from the profiles given in \eqref{seis.quatro}. The new notion of criticality introduced in \cite{paper1} together with the concentration-compactness given in Theorem \ref{teo_tinta_frac} can lead to a new way to approach nonlocal elliptic problems involving critical growth. For instance, it is usual to apply \cite[Theorem 5]{palatucci} to study Eq. \eqref{P} considering nonlinearities of the type $f(x,t) = K(x) |t|^{2_s^\ast - 2} t.$ With aid of Theorem \ref{teo_tinta_frac}, it is possible to consider more general self-similar critical nonlinearities (for more details see \cite[Sect. 3.1]{paper1}).

\begin{remark}\label{r_radial_Dcompact}
We can consider the closed subspace consisting of radial functions $\mathcal{D}^{s,2}_\rad (\mathbb{R}^N) = \left\lbrace u \in \mathcal{D}^{s,2}(\mathbb{R}^N) : u(x) = u(y) \mbox{ if } |x|=|y|\right\rbrace$ to get more compactness.
In this case, by the same argument of \cite[Proposition 5.1]{tintabook}, we have $w^{(n)} \in \mathcal{D}^{s,2}_\rad (\mathbb{R}^N)$ with $w^{(n)} = 0,$ for all $n \in \mathbb{N}_0.$
\end{remark}

In this paper, we prove the inhomogeneous case of Theorem \ref{teo_tinta_frac}, that is, for the space $H^s (\mathbb{R}^N) = \left\lbrace  u \in L^2(\mathbb{R}^N) : | \cdot|^{2s} \mathscr{F}u \in L^2(\mathbb{R}^N) \right\rbrace ,\quad 0<s\leq N/2,$ with the norm $\|u\|^2 := \int _{\mathbb{R}^N} |\xi|^{2s} \left |\mathscr{F} u \right| ^2 +u^2\dxi.$ It is known that $H^s (\mathbb{R}^N)$ is continuous embedded in $L^p(\mathbb{R}^N),$ for $2\leq p\leq 2_s^\ast,$ in the case where $N>2s,$ and in $L^p(\mathbb{R}^N),$ for $2\leq p< \infty,$ in the case where $N=2s.$ The following version of Theorem \ref{teo_tinta_frac} will be used to study the existence of solutions of \eqref{P} for the case where $f(x,t)$ has subcritical growth. Next we set $2_s^\ast = \infty,$ when $N=2s.$
\begin{theorem}\label{teo_tinta_frac_sub}
Let $(u_k) \subset H^s(\mathbb{R}^N)$ be a bounded sequence and $0<s\leq N/2.$ Then there exist $\mathbb{N}_{0 } \subset \mathbb{N},$ $(w ^{(n)}) _{n \in \mathbb{N}_{0 } }\subset H^s(\mathbb{R}^N),$ $(y_k ^{(n)}) _ {k \in \mathbb{N}}\subset\mathbb{Z}^N,$ $n \in \mathbb{N}_{0 },$ such that, up to a subsequence of $(u_k)$
\begin{align}
\qquad \qquad &u_k (\cdot + y_k^{(n)} )\rightharpoonup w^{(n)}, \text{ as } k \rightarrow \infty,\mbox{ in } H^s(\mathbb{R}^N),&\label{seis.um.sub}\\
\qquad \qquad &|y^{(n)}_k - y^{(m)}_k| \rightarrow \infty , \text{ as } k \rightarrow \infty,\text{ for } m \neq n,&\label{seis.dois.sub} \\
\qquad \qquad &\sum _{n \in \mathbb{N}_0}\| w^{(n)} \|^2 \leq \limsup_k \|u_k \| ^2,&\label{seis.tres.sub} \\
\qquad \qquad &u_k - \sum _{n \in \mathbb{N}_0} w^{(n)}(\cdot + y_k^{(n)} ) \rightarrow  0, \text{ as } k \rightarrow \infty, \text{ in } L^{p}(\mathbb{R}^{N}),&\label{seis.quatro.sub}
\end{align}
for any $p \in (2,2^{\ast } _s)$. Moreover, $1 \in \mathbb{N}_0,\ y_k ^{(1)} = 0$ and the series in \eqref{seis.quatro.sub} converges uniformly in $k.$
\end{theorem}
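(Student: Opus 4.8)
The plan is to deduce Theorem \ref{teo_tinta_frac_sub} from the homogeneous version (in its abstract Hilbert-space form, as carried out for $\mathcal{D}^{s,2}$ in Theorem \ref{teo_tinta_frac}), applied this time to the Hilbert space $H^s(\mathbb{R}^N)$. The key structural observation is that, in the inhomogeneous setting, the only noncompact symmetry group acting by isometries on $H^s(\mathbb{R}^N)$ and relevant to the failure of weak compactness is the group of integer translations $u \mapsto u(\cdot + y)$, $y \in \mathbb{Z}^N$ — the dilation group $u \mapsto \gamma^{(N-2s)j/2} u(\gamma^j \cdot)$ is \emph{not} a group of isometries of the $H^s$ norm (the $L^2$ part scales differently from the Gagliardo seminorm), so no dilation profiles can appear. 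Concretely, I would verify the hypotheses of the abstract dislocation-space machinery of Tintarev–Fieseler \cite{tintabook}: the operators $g_y : u \mapsto u(\cdot + y)$, $y \in \mathbb{Z}^N$, form a group of unitary operators on $H^s(\mathbb{R}^N)$, and if $y_k \to \infty$ in $\mathbb{Z}^N$ then $g_{y_k} \rightharpoonup 0$ weakly (this is the "discreteness at infinity" of the group). One then checks the compactness-modulo-the-group property, which in this setting is exactly the classical fact that translation-invariant weak-to-norm compactness holds in $L^p$ for $p \in (2, 2^\ast_s)$ — i.e., if $g_{y_k} u_k \rightharpoonup 0$ in $H^s$ for every sequence $(y_k) \subset \mathbb{Z}^N$, then $u_k \to 0$ in $L^p(\mathbb{R}^N)$. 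This last statement is the inhomogeneous analogue of P.-L. Lions' vanishing lemma and can be proved by a covering of $\mathbb{R}^N$ by unit cubes centered at $\mathbb{Z}^N$ combined with the local compact embedding $H^s \hookrightarrow\hookrightarrow L^p_{\loc}$ and interpolation.

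With those ingredients in place, the abstract profile decomposition theorem of \cite{tintabook} produces the countable index set $\mathbb{N}_0 \subset \mathbb{N}$, the profiles $w^{(n)} \in H^s(\mathbb{R}^N)$ and the translation sequences $(y_k^{(n)}) \subset \mathbb{Z}^N$ satisfying \eqref{seis.um.sub}, the asymptotic orthogonality \eqref{seis.dois.sub} (which is just $|y_k^{(n)} - y_k^{(m)}| \to \infty$, the "distance in the group goes to infinity" condition — here with no dilation term, in contrast to Theorem \ref{teo_tinta_frac}), and the Bessel-type inequality \eqref{seis.tres.sub} $\sum_n \|w^{(n)}\|^2 \le \limsup_k \|u_k\|^2$, together with the statement that the remainder $r_k := u_k - \sum_n w^{(n)}(\cdot + y_k^{(n)})$ converges to zero in the sense that $g_{y}$-translates of $r_k$ converge weakly to zero uniformly. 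The uniform convergence in $k$ of the series is inherited from the abstract theorem (it follows from \eqref{seis.tres.sub} and the asymptotic orthogonality, as the tails have norm controlled uniformly). Finally, the normalization $1 \in \mathbb{N}_0$, $y_k^{(1)} = 0$ is obtained, as usual, by taking $w^{(1)}$ to be the weak limit of $(u_k)$ itself (passing to the subsequence along which $u_k \rightharpoonup w^{(1)}$) and including it as the first profile.

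The one genuinely new step — and the place where one cannot simply quote the abstract theorem — is upgrading the abstract "remainder vanishes in the group-weak sense" to the concrete conclusion \eqref{seis.quatro.sub}, namely $r_k \to 0$ strongly in $L^p(\mathbb{R}^N)$ for every $p \in (2, 2^\ast_s)$. Here I would argue as follows: the remainder $r_k$ is bounded in $H^s$ (by \eqref{seis.tres.sub} and the uniform convergence of the series), and by the abstract conclusion $g_{y_k} r_k \rightharpoonup 0$ in $H^s$ for every $(y_k) \subset \mathbb{Z}^N$; then the vanishing lemma quoted above gives $r_k \to 0$ in $L^p$. The main obstacle is making the vanishing lemma rigorous in the inhomogeneous fractional setting and treating the endpoint behavior correctly: one must be careful that $p$ stays strictly below $2^\ast_s$ (strong $L^{2^\ast_s}$ convergence of the remainder genuinely fails when $N > 2s$, which is precisely why the homogeneous theorem needs dilation profiles), and, in the borderline case $N = 2s$ where $2^\ast_s = \infty$, that the local embedding $H^s \hookrightarrow L^q_{\loc}$ for all finite $q$ still closes the argument. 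I would handle this by writing $\mathbb{R}^N = \bigcup_{z \in \mathbb{Z}^N}(z + Q)$ with $Q = [0,1]^N$, bounding $\|r_k\|_{L^p(\mathbb{R}^N)}^p \le \sum_z \|r_k\|_{L^p(z+Q)}^p$, interpolating $\|r_k\|_{L^p(z+Q)} \le \|r_k\|_{L^2(z+Q)}^{1-\theta} \|r_k\|_{L^{2^\ast_s}(z+Q)}^{\theta}$, summing using $\ell^2 \hookrightarrow \ell^{p}$-type estimates and the uniform $H^s$ bound, and then invoking the local compact embedding together with $g_{y_k} r_k \rightharpoonup 0$ to kill $\sup_z \|r_k\|_{L^2(z+Q)}$, which carries the product to zero.
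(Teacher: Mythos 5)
Your proposal is correct and follows essentially the same route as the paper: view $(H^s(\mathbb{R}^N), D_{\mathbb{Z}^N})$ as a dislocation space, invoke the abstract profile decomposition of Tintarev--Fieseler \cite{tintabook}, and convert the $D_{\mathbb{Z}^N}$-weak vanishing of the remainder into strong $L^p$ convergence via cocompactness. The only difference is that you sketch a direct proof of the cocompactness step (unit-cube covering, local compact embedding, interpolation), whereas the paper proves only the easy implication of Proposition \ref{cocompact} and quotes \cite[Theorem 2.4]{tintacocompact} for the converse; your sketch is the standard argument behind that citation.
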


Those profile decompositions for bounded sequence are unique up to a permutation of index, and constant operator (see \cite[Proposition 3.4]{tintabook}). Theorem \ref{teo_tinta_frac_sub} is the fractional counterpart of \cite[Corollary 3.3]{tintabook} and it describes how bounded sequences in $H^s (\mathbb{R}^N)$ fails to converges in $L^p(\mathbb{R}^N),\ 2<p<2_s^\ast.$ This ``error'' of convergence is generated by the invariance of action of translations in $H^s (\mathbb{R}^N).$ Moreover, it can be seen as an alternative result to a version of Lions Lemma of compactness, for $H^s (\mathbb{R}^N),$ proved in \cite[Lemma 2.2]{felmer}. Also, we empathize that the profile decomposition of Theorem \ref{teo_tinta_frac_sub} is given by translations of the form $u \mapsto u(\cdot - y),$ $y\in \mathbb{Z}^N$ and, differently from \cite[Theorem 4 and Theorem 8]{palatucci}, we also consider the limit case where $s=N/2.$ Theorem \ref{teo_tinta_frac_sub} holds thanks to a cocompactness result contained in \cite{tintacocompact}, and using the abstract approach of \cite{tintabook}, considering $H^s(\mathbb{R}^N),$ $0<s\leq N/2,$ as the starting Hilbert space.

\section{Nonlinear fractional Schr\"{o}dinger equation}\label{s_Application}
\subsection{Hypothesis}
In order to describe our results in a more precisely way, next we state the main assumptions on the potential $a(x)$ and the nonlinearity $f(x,t).$ We always assume that $0<s<\min\{1,N/2 \}.$ We denote by $\|\cdot\|_p$ and $\|\cdot\|_\infty$ the norms of the spaces $L^p(\mathbb{R}^N),$ $1\leq p < \infty$ and $L ^\infty(\mathbb{R}^N)$ respectively. $|A|$ denotes the Lebesgue measure of the set $A \subset \mathbb{R}^N.$
\subsubsection{Subcritical case} Let us first introduce the assumptions on $a(x)=V(x)-b(x).$
\begin{flalign}\tag{$V_1$}\label{V_pe}
V(x) \text{ is a }\mathbb{Z}^N\text{--periodic function in the space }L ^{\sigma} _{\loca } (\mathbb{R}^N) \text{ for some }\sigma > 2N/(N+2s).&&
\end{flalign}
\begin{flalign}\tag{$V_2$}\label{V_sirakov}
\begin{aligned}
&\exists \ \mathcal{B}>0 \ \text{such that (s.t.) }V(x) \geq -\mathcal{B}\ \text{ almost everywhere (a.e.) }\ x \in \mathbb{R}^N \text{ and }\\
&\mathcal{C}_{V} := \inf _{ u \in C ^\infty _0 ( \mathbb{R} ^N),\|u\|_2 = 1 } \int _{\mathbb{R}^N} |(-\Delta)^{s/2} u | ^2 +V(x)u^2\dx>0.
\end{aligned}
&&
\end{flalign}
\begin{flalign}\tag{$V_3$}\label{V_be}
\begin{aligned}
&\exists \ \beta> N/2s \text{ s.t. } 0\leq b(x)\in L^{\beta}(\mathbb{R}^N)\text{ and for }\beta' = \beta/(\beta -1),\\
&\|b(x)\|_{\beta}<\mathcal{C}_{V} ^{(\beta)}:= \inf _{u \in H^s_{V}(\mathbb{R}^N),\|u\|_{2\beta'}=1} \int _{\mathbb{R}^N} |(-\Delta)^{s/2} u | ^2 +V(x)u^2\dx.
\end{aligned}
&&
\end{flalign}
\begin{flalign}\tag{$V_4$}\label{V_as}
\exists \ \sigma > N/2s \text{ s.t } V(x) \in L ^{\sigma} _{\loca } (\mathbb{R}^N)\text{ and } \exists\ V _{\infty}:=\lim _{|x|\rightarrow \infty} V(x) > 0.&&
\end{flalign}
We also assume the following conditions on the nonlinear function $f(x,t).$

\begin{flalign}\tag{$f_1$}\label{bem_def}
\begin{aligned}
& f: \mathbb{R}^N \times \mathbb{R} \rightarrow \mathbb{R}
  \text{ is a Carath\'{e}odory function and }\forall \, \varepsilon >0,\ \exists \, C_\varepsilon >0, \ p_\varepsilon \in (2,2_s ^\ast)\text{ s.t. }\\
&|f(x,t)| \leq \varepsilon (|t| + |t| ^{2 _s ^{\ast} -1}) + C_{\varepsilon} |t| ^{p_{\varepsilon } -1},\text{ a.e. } x \in \mathbb{R}^N, \ \forall \,  t \in \mathbb{R}.\nonumber
\end{aligned}
&&
\end{flalign}
\begin{flalign}\tag{$f_2$}\label{A-R}
\exists \ \mu > 2 \quad \text{s.t.}\quad \mu F(x,t):= \mu \int _0 ^t f(x, \tau)\dtau \leq f(x,t)t,\text{ a.e. } x \in \mathbb{R}^N, \  \forall \, t \in \mathbb{R} &&
\end{flalign}
\begin{flalign}\tag{$f_3$}\label{posi_algum}
\begin{aligned}
&\exists \ R>0,\ t_0 > 0,\ x_0 \in \mathbb{R}^N\text{ s.t. setting } C_R(x_0,t_0) = (B_{R+1}(x_0)\setminus B_R(x_0)) \times [0,t_0],\\
&|B_R|\inf _{B_R(x_0)} F(x,t_0) + |B_{R+1}\setminus B_R| \inf_{(x,t) \in  C_R(x_0,t_0)} F(x,t) >0.
\end{aligned}
 &&
\end{flalign}
\begin{flalign}\tag{$f_4$}\label{quadratic}
\begin{aligned}
&\lim_{t \rightarrow 0} \frac{f(x,t)}{t}=0\quad\text{and}\quad\lim _{|t|\rightarrow \infty}\frac{F(x,t)}{t^2} = \infty,\text{ uniformly in }x\text{ and }\\
&\forall \ \text{compat } K \subset \mathbb{R}, \ \exists C=C(K)>0 \ \text{s.t.} \ |f(x,t)|\leq C, \text{ a.e }x \in \mathbb{R}^N ,\ \forall \, t \in K.
\end{aligned}
&&
\end{flalign}
\begin{flalign}\tag{$f_5$}\label{dife_posi}
\forall \, 0<a<b,\ \inf_{x \in \mathbb{R}^N} \inf _{a \leq |t| \leq b} \mathcal{F}(x,t) >0, \text{ where }\mathcal{F}(x,t):=\frac{1}{2}f(x,t)t-F(x,t). &&
\end{flalign}
\begin{flalign}\tag{$f_6$}\label{dife_esti}
\exists \ p_0 > \max\{1,N/2s\},\, a_0,\ R_0 >0 \text{ s.t. } |f(x,t)|^{p_0} \leq a_0 |t|^{p_0} \mathcal{F}(x,t),\text{ a.e.}x \in \mathbb{R}^N,\ \forall \, |t|>R_0. &&
\end{flalign}
\begin{flalign}\tag{$f_7$}\label{lim_infinito}
\begin{aligned}
&\exists\ \mathbb{Z}^N\text{--periodic function }f_{\mathcal{P}}(x,t), \text{ satisfyng \eqref{bem_def} and either \eqref{A-R}--\eqref{posi_algum} or \eqref{quadratic}. } \ s.t.\\
&\lim _{|x| \rightarrow \infty}|f(x,t) - f_{\mathcal{P}}(x,t)|=0,\text{ uniformly in compact subsets of }\mathbb{R}.
\end{aligned}
&&
\end{flalign}
\begin{flalign}\tag{$f_8$}\label{increasing}
\text{For a.e. }x \in \mathbb{R}^N,\text{ the function } t\mapsto f_\mathcal{P} (x,t)/|t|,\text{ is strict increasing in }\mathbb{R}. &&
\end{flalign}
Next we assume that $f_{\mathcal{P}}(x,t)$ is independent of $x$ and we denote by $f_\infty(t) = f_\mathcal{P}(t).$
\begin{flalign}\tag{$f_9$}\label{lim_infinito_infty}
f_\infty (t) \in C^1(\mathbb{R}),\ \exists \, t_0 >0\text{ s.t. }F_\infty(t_0) -\frac{V_\infty}{2}t^2_0 > 0,\ \text{where } F_\infty (t) = \int_0 ^t f_\infty(\tau ) d \tau. &&
\end{flalign}
We look for solutions in the Hilbert space $H^s_{V} (\mathbb{R}^N)$ defined as the completion of $C _0 ^\infty (\mathbb{R}^N)$ with respect to the norm and scalar product
\begin{equation*}
\|u\|_{V} ^2 := \int _{\mathbb{R}^N} |(-\Delta)^{s/2} u | ^2 +V(x)u^2\dx\quad \text{and}\quad (u,v)_{V} := \int _{\mathbb{R}^N} (-\Delta )^{s/2} u (-\Delta )^{s/2} v +V(x) u v \dx,
\end{equation*}
see Proposition \ref{welldef}. Writing, $a(x) = V(x) - b(x),$ and assuming \eqref{V_be} and \eqref{bem_def} we can see that the functional associated with \eqref{P}, $I :H_{V}^s (\mathbb{R}^N ) \rightarrow \mathbb{R}$ given by
\begin{equation*}
I(u) = \frac{1}{2}\|u\|_{V} ^2 -\frac{1}{2}\int _{\mathbb{R}^N}b(x) u^2 \dx -  \int _{\mathbb{R}^N} F(x,u) \dx,
\end{equation*}
is well defined, belongs to $C^1 (H _{V} ^s (\mathbb{R}^N ))$, with
\begin{equation*}
I'(u)\cdot v= \int _{\mathbb{R}^N} (-\Delta )^{s/2} u (-\Delta )^{s/2 } v +(V(x) -b(x)) u v \dx - \int _{\mathbb{R}^N} f(x,u) v \dx,\quad u, v \in H^{s,2} _{V}(\mathbb{R}^N).
\end{equation*}
Thus critical points of $I$ correspond to weak solutions of \eqref{P} and conversely. Consider the minimax level
\begin{equation}\label{minimax}
c(I)= \inf_{\gamma \in \Gamma_I } \sup_{t \geq 0 } I(\gamma (t)),
\end{equation}
where
\begin{equation}\label{minimax_paths}
\Gamma _{I} = \left\lbrace \gamma \in C([0, \infty ),  H^s _{V}(\mathbb{R} ^N) ) : \gamma(0)=0, \ \lim _{t \rightarrow \infty}I(\gamma (t)) = - \infty \right\rbrace.
\end{equation}
Associated with the limit functions given in \eqref{V_as}, \eqref{lim_infinito}, \eqref{lim_infinito_infty}, we consider the $C^1$ functionals
\begin{align*}
I _{\mathcal{P}} (u)&:= \frac{1}{2}\| u \| _{V} ^2 - \int _{\mathbb{R}^N} F_{\mathcal{P}}(x,u) \dx,\quad u \in H^s _{V}(\mathbb{R}^N),\\
I _\infty (u) &:=\frac{1}{2} \|u\|_{V_\infty} ^2 - \int _{\mathbb{R}^N} F_{\infty}(u) \dx,\quad u \in H^s _{V} (\mathbb{R}^N),
\end{align*}
where $F_\mathcal{P}(x,t) = \int_0 ^t f _\mathcal{P} (x, \tau) d \tau.$ Similarly, as in \eqref{minimax} and \eqref{minimax_paths}, we can define $c(I_\mathcal{P}),$ $c(I_\infty), \Gamma _{I_\mathcal{P}}$ and $\Gamma _{I_\infty}.$ Next we state the assumption on the minimax levels to guarantees compactness of the PS sequences at the mountain pass level of $I.$
\begin{flalign}\tag{$f_{10}$}\label{minimax_comper}
c(I) < c(I _{\mathcal{P}}).&&
\end{flalign}
\begin{flalign}\tag{$f'_{10}$}\label{minimax_comper'}
c(I) < c(I _\infty).&&
\end{flalign}
\noindent In the autonomous case, $f(x,t)=f(t),$ we consider the following variant of \eqref{posi_algum}.
\begin{flalign}\tag{$f_3'$}\label{posi_algum_auto}
\exists \ t_0 > 0 \text{ s.t. }F(t_0) >0. &&
\end{flalign}
\subsubsection{Critical case} Next we state our hypothesis on $a(x)\equiv V(x),$ assuming that $b(x)\equiv 0.$

\begin{flalign}\tag{$V_1 ^\ast$}\label{V_sirakov_ast}
\begin{aligned}
& \exists \text{ a finte set }  \mathcal{O} \subset \mathbb{R}^N \text{  s.t. }  V(x) \in L ^1_{\loca } (\mathbb{R}^N)\cap C(\mathbb{R}^N \setminus \mathcal{O}), \; V(x) \leq 0 \text{ a.e. in } \mathbb{R}^N \text{ and } \\
&  \mathcal{C}^\ast_{V}:= \inf _{ u \in C ^\infty _0 ( \mathbb{R} ^N)\setminus\{0\} } \frac {\int _{\mathbb{R}^N} |(-\Delta)^{s/2} u | ^2 +V(x)u^2\dx}{\int _{\mathbb{R}^N} |V(x)| u^2 \dx}>0.
\end{aligned}
&&
\end{flalign}

\begin{flalign}\tag{$V_2 ^\ast$}\label{V_limits_ast}
\begin{aligned}
&\exists \, a_\ast \in \mathbb{R}^N\text{ s.t.  there exist the limits uniformly on every compact subset of } \mathbb{R}^N,\\
& V_+ (x) := \lim_{\lambda \rightarrow \infty } \lambda ^{-2s} V(\lambda ^{-1} (x+a_\ast)) \text{ and } 
 V_{-} (x) :=  \lim_{\lambda \rightarrow 0 } \lambda ^{-2s} V(\lambda ^{-1} (x+a_\ast)). \\
 &\text{Moreover }V_\kappa (x) \text{ satisfies \eqref{V_sirakov_ast} if }V_\kappa (x) \not \equiv 0\text { for }\kappa =+,-.\text{ Also, }\lim _{|x|\rightarrow \infty} V(x)=0.\\
\end{aligned}
&&
\end{flalign}


\begin{flalign}\tag{$V_3 ^\ast$}\label{V_conve_seq_crit}
\begin{aligned}
&\forall \, (\lambda_k)\subset \mathbb{R}^+\text{ s.t. either }|\lambda_k|\rightarrow\infty\text{ or }|\lambda_k|\rightarrow 0;\text{ and }(y_k) \subset \mathbb{R}^N\text{ s.t. }|\lambda_k y_k|\rightarrow \infty, \\
& \exists \lim _{k \rightarrow \infty} \lambda_k ^{-2s} V(\lambda _k ^{-1}x + y_k ) = 0,\text{ uniformly on every compact subset of } \mathbb{R}^N 
\end{aligned}
&&
\end{flalign}
We assume the following conditions on the nonlinearity $f(x,t).$
\begin{flalign}\tag{$f_1 ^\ast$}\label{bem_def_ast}
\begin{aligned}
&f: \mathbb{R}^N \times \mathbb{R} \rightarrow \mathbb{R}
  \text{ is a Carath\'{e}odory function satisfying the growth condition,}\\
&\exists \ C>0,\text{ s.t. }|f(x,t)| \leq C |t| ^ {2 ^\ast _s - 1}\text{ a.e. } x \in \mathbb{R}^N,\ \forall \, t \in \mathbb{R}.
\end{aligned}
&&
\end{flalign}
\begin{flalign}\tag{$f_2 ^\ast$}\label{f_extra}
\begin{aligned}
&\forall \, a_1, \ldots, a_{M}\in \mathbb{R},\ \exists C=C(M)>0 \text{ s.t. }\\
&\left|F\left(x,\sum _{n=1} ^{M} a_n \right) - \sum_{n=1}^{M}F(x,a_n )\right| \leq C(M) \sum _{m \neq n \in \{1,\ldots,M\}} |a_n| ^{2^{\ast} _s -1} |a_m|\ \text{ a.e. } x \in \mathbb{R}^N.
\end{aligned}
&&
\end{flalign}
\begin{flalign}\tag{$f_3 ^\ast$}\label{limites_ast}
\begin{aligned}
&\exists \ \gamma > 1 \text{ s.t. there exist the following limits uniformly on every compact subset of } \mathbb{R}^N,\\
& f_0 (t) := \lim _{|x| \rightarrow \infty} f(x,t), \\
& f_+(t):= \lim _{j \in \mathbb{Z}, j \rightarrow + \infty} \gamma ^{- \frac{N+2s}{2}j} f\left(\gamma^{-j} x , \gamma ^{ \frac{N-2s}{2}j}  t \right),\\
& f_{-}(t):= \lim _{j \in \mathbb{Z}, j \rightarrow - \infty} \gamma ^{- \frac{N+2s}{2}j} f\left(\gamma ^{-j} x, \gamma ^{ \frac{N-2s}{2}j}  t \right),\\
&\text{and the primitive }F_\kappa (t) \text{ satisfies \eqref{posi_algum_auto} for }\kappa =0,+,-.
\end{aligned}
&&
\end{flalign}
\begin{flalign}\tag{$f_4 ^\ast$}\label{increasing_crit}
\text{The function }t\mapsto \frac{f_{\kappa} (t)}{|t|} \text{ is strict increasing for }\kappa=0,+,-.&&
\end{flalign}

From \eqref{V_sirakov_ast} we can see that $\| \cdot \|_{V}$ defines a norm in $\mathcal{D}^{s,2}(\mathbb{R}^N)$ which is equivalent to the standard one (see Proposition \ref{welldef}). Thus, the energy functional $I_\ast :  \mathcal{D}^{s,2}(\mathbb{R}^N) \rightarrow \mathbb{R}$ given by
\begin{equation*}
I_\ast(u) = \frac{1}{2}\|u\|_{V} ^2 -  \int _{\mathbb{R}^N} F(x,u) \dx,\quad u \in \mathcal{D}^{s,2}(\mathbb{R}^N)
\end{equation*}
is well defined and is continuously differentiable provided that \eqref{bem_def_ast} holds. We can define $c(I_\ast)$ and $\Gamma _{I_\ast}$ similarly as in \eqref{minimax} and \eqref{minimax_paths}, by just replacing $H^s _{V} (\mathbb{R} ^N)$ by $\mathcal{D}^{s,2}(\mathbb{R}^N).$ 

We use the next assumptions to compare certain minimax levels. 
\begin{flalign}\tag{$\mathscr{H}^\ast$}\label{suficient_ast}
\begin{aligned}
&V(x)\leq V_\pm(x),\text{ a.e. }x\in \mathbb{R}^N,\\
&F _\kappa (t) \leq  F(x,t),\text{ a.e. } x \in \mathbb{R}^N,\ \forall \, t \in \mathbb{R}\text{ for any }\kappa = 0, +, -.
\end{aligned}
&&
\end{flalign}
\begin{flalign}\tag{$\mathscr{H}^\ast_0$}\label{suficient_ast_0}
\begin{aligned}
&\text{Assume \eqref{suficient_ast}. The first inequality in \eqref{suficient_ast} is strict for a set of positive measure or}\\
&\exists \ \delta >0 \text{ s.t. the second inequality in \eqref{suficient_ast} is strict a.e. } x \in \mathbb{R}^N,  \, \forall \, t \in (-\delta,\delta).
\end{aligned}
&&
\end{flalign}
To study the autonomous case $f(x,t) = f(t)$ we assume that the nonlinearity is self-similar,
\begin{flalign}\tag{$f_5^\ast$}\label{selfsimilar}
\exists \ \gamma>1 \text{ s.t. }F(t)=\gamma ^{-Nj} F\left(\gamma ^{\frac{N-2s}{2}j} t \right),\ \forall \, t \in \mathbb{R},\ j \in \mathbb{Z}.
&&
\end{flalign}

\subsection{Statement of the main existence results}
We first state our results on the existence of ground states for Eq. \eqref{P} for subcritical and critical growth.
\begin{theorem}\label{teoremao}~
\begin{enumerate}[label=(\roman*)]
\item\label{GS_UM} Suppose that $f(x,t)$ and $a(x)\equiv V(x)$ are $\mathbb{Z}^N-$periodic and satisfy \eqref{bem_def}--\eqref{posi_algum} or  \eqref{posi_algum}--\eqref{dife_esti} and \eqref{V_pe}--\eqref{V_sirakov} respectively. Then the equation \eqref{P} has a ground state.
\item\label{GS_DOIS} Suppose that $f(t) \in C^1 (\mathbb{R}^N)$ satisfies \eqref{posi_algum_auto} and \eqref{selfsimilar}. Let $0<\lambda<\Lambda _{N,s}$ given in \eqref{par_hardy} and $\mathcal{G}=\{ u \in \mathcal{D}^{s,2} (\mathbb{R}^N ):\int_{\mathbb{R}^N}F(u)\dx=1 \}.$ Then, there is a radial minimizer $w$ for
	\begin{equation}\label{min_crit}
	\mathcal{I}_\lambda = \inf _{ u \in \mathcal{G}} \int _{\mathbb{R}^N} |(-\Delta)^{s/2} u| ^2 -\lambda |x|^{-2s}u^2 \dx.
	\end{equation}
	Furthermore, for any $w$ minimizer of \eqref{min_crit}, there exists $\alpha >0$ such that $u=w(\cdot / \alpha )$ is a ground state of \eqref{P} for $a(x) = -\lambda |x|^{-2s}.$
\end{enumerate}
\end{theorem}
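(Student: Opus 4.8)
\emph{Preliminary remark.} The two items call for related but distinct variational schemes; in both the lack of compactness is handled through the profile decompositions of Section~\ref{s_profile_decomp}. I sketch each in turn.

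\emph{Item \ref{GS_UM} (periodic subcritical case).} Set $m:=\inf\{I(v):v\in H^s_{V}(\mathbb{R}^N)\setminus\{0\},\ I'(v)=0\}$. I would first record the standard facts from the variational setting: under \eqref{bem_def}--\eqref{posi_algum} or \eqref{posi_algum}--\eqref{dife_esti} together with \eqref{V_pe}--\eqref{V_sirakov}, the functional $I$ has mountain-pass geometry, all Palais--Smale sequences are bounded, $\mathcal{F}(x,\cdot)\geq 0$ so that $I(v)=\int_{\mathbb{R}^N}\mathcal{F}(x,v)\dx\geq 0$ at every critical point $v$, and there is $\rho_0>0$ with $\|v\|_{V}\geq\rho_0$ for every nontrivial critical point $v$ (using \eqref{V_be}, \eqref{bem_def}, and \eqref{A-R} in the first case, \eqref{dife_posi}--\eqref{dife_esti} in the second). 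A mountain-pass argument combined with the compactness step below yields at least one nontrivial critical point with $I\leq c(I)$, so $m<\infty$; and $m>0$, because if a minimizing sequence of critical points vanished in $L^p$, $2<p<2_s^\ast$, then \eqref{V_be}--\eqref{bem_def} would force its $\|\cdot\|_{V}$-norm to tend to $0$, against $\|v_k\|_{V}\geq\rho_0$. Now take $(v_k)$ with $I'(v_k)=0$, $I(v_k)\to m$; it is bounded, so Theorem~\ref{teo_tinta_frac_sub} gives $v_k(\cdot+y_k^{(n)})\rightharpoonup w^{(n)}$, $y_k^{(n)}\in\mathbb{Z}^N$. Since $V$ and $f$ are $\mathbb{Z}^N$-periodic, $I$ commutes with integer translations, so each translated sequence is again Palais--Smale and, by local compactness of the Sobolev embeddings, each $w^{(n)}$ is a critical point of $I$; not all vanish (else $v_k\to 0$ in $L^p$ by \eqref{seis.quatro.sub}, contradicting $\|v_k\|_{V}\geq\rho_0$). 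Finally a Brezis--Lieb-type splitting along the profiles --- the quadratic part from asymptotic orthogonality of the translates and periodicity of $V$, the nonlinear part from \eqref{seis.quatro.sub} --- gives $\sum_n I(w^{(n)})\leq m$; since each nontrivial profile has $I(w^{(n)})\geq m>0$, exactly one survives and realizes $I(w^{(n_0)})=m$. Hence $w^{(n_0)}$ is a ground state.

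\emph{Item \ref{GS_DOIS}, existence of a radial minimizer.} Write $Q(u):=\int_{\mathbb{R}^N}|(-\Delta)^{s/2}u|^2-\lambda|x|^{-2s}u^2\dx$. By the sharp Hardy inequality \eqref{par_hardy} and $0<\lambda<\Lambda_{N,s}$, $Q$ is an equivalent square norm on $\mathcal{D}^{s,2}(\mathbb{R}^N)$; together with the Sobolev embedding and the bound $|F(t)|\leq C|t|^{2_s^\ast}$ (which follows at once from \eqref{selfsimilar} and continuity of $F$), this yields $\mathcal{I}_\lambda>0$, while from \eqref{posi_algum_auto} a radial bump taking the value $t_0$ on a large ball, rescaled by a dilation, lies in $\mathcal{G}$, so $\mathcal{G}\cap\mathcal{D}^{s,2}_\rad(\mathbb{R}^N)\neq\emptyset$. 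Take a radial minimizing sequence $(u_k)$, bounded in $\mathcal{D}^{s,2}(\mathbb{R}^N)$, and apply Theorem~\ref{teo_tinta_frac} with Remark~\ref{r_radial_Dcompact}: radiality forces $\mathbb{N}_0=\emptyset$, so only dilation profiles remain, $u_k-\sum_n\gamma^{\frac{N-2s}{2}j_k^{(n)}}w^{(n)}(\gamma^{j_k^{(n)}}\cdot)\to 0$ in $L^{2_s^\ast}(\mathbb{R}^N)$ with $w^{(n)}$ radial. Both $Q$ and $u\mapsto\int F(u)$ are invariant under the $\gamma$-self-similar action --- the latter by \eqref{selfsimilar}, the former because $[\,\cdot\,]_s^2$ and $\int|x|^{-2s}u^2$ are --- and the profiles separate in scale, so $Q(u_k)=\sum_n Q(w^{(n)})+o(1)$ and, by the decoupling of $F$ at the critical exponent (a consequence of the growth and self-similarity of $F$), $\int F(u_k)=\sum_n\int F(w^{(n)})+o(1)$; hence $\sum_n\int F(w^{(n)})=1$ and $\mathcal{I}_\lambda\geq\sum_n Q(w^{(n)})$. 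Using the scalings $Q(v(\cdot/\sigma))=\sigma^{N-2s}Q(v)$ and $\int F(v(\cdot/\sigma))=\sigma^N\int F(v)$, one gets $Q(w^{(n)})\geq\mathcal{I}_\lambda\,(\int F(w^{(n)}))^{2/2_s^\ast}$ whenever $\int F(w^{(n)})>0$ and $Q(w^{(n)})\geq 0$ otherwise; inserting this into $\mathcal{I}_\lambda\geq\sum_n Q(w^{(n)})$ and using strict superadditivity of $t\mapsto t^{2/2_s^\ast}$ on $[0,1]$ forces all profiles but one to vanish, and the surviving $w:=w^{(n_0)}\in\mathcal{D}^{s,2}_\rad(\mathbb{R}^N)$ satisfies $\int F(w)=1$, $Q(w)=\mathcal{I}_\lambda$. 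That $\mathcal{I}_\lambda$ equals its radial counterpart follows from polarization with respect to hyperplanes through the origin, which preserves $\int|x|^{-2s}u^2$ and $\int F(u)$ and does not increase $[\,\cdot\,]_s^2$.

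\emph{Item \ref{GS_DOIS}, from a minimizer to a ground state.} Let $w$ be any minimizer of \eqref{min_crit}. Since the dilation direction is transversal to $\mathcal{G}$, the Lagrange multiplier rule gives $\mu\in\mathbb{R}$ with $(-\Delta)^s w-\lambda|x|^{-2s}w=\tfrac{\mu}{2}f(w)$. The fractional Pohozaev identity of Proposition~\ref{pohozaev_id}, which covers the singular potential $-\lambda|x|^{-2s}$, gives $\tfrac{N-2s}{2}Q(w)=\tfrac{N\mu}{2}\int F(w)$, whence $\mu=\tfrac{2}{2_s^\ast}\mathcal{I}_\lambda>0$. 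Setting $\alpha:=(\mu/2)^{1/2s}$ and $u:=w(\cdot/\alpha)$, the scaling identities yield $(-\Delta)^s u-\lambda|x|^{-2s}u=f(u)$, so $u$ is a nontrivial critical point of $I_\ast$. For an arbitrary nontrivial critical point $v$ of $I_\ast$, testing the equation with $v$ together with Pohozaev gives $I_\ast(v)=\tfrac{s}{N}Q(v)$ and $\int F(v)=\tfrac{1}{2_s^\ast}Q(v)>0$; rescaling $v$ onto $\mathcal{G}$ and using $Q\geq\mathcal{I}_\lambda$ there bounds $Q(v)$, hence $I_\ast(v)$, from below by exactly the value attained at $u$. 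Therefore $u$ is a ground state of \eqref{P} with $a(x)=-\lambda|x|^{-2s}$. The delicate point throughout is the compactness step: for Item \ref{GS_UM}, verifying that weak limits of the translated Palais--Smale sequence are genuine critical points and running the Brezis--Lieb splitting under the nearly critical growth permitted by \eqref{bem_def}; for Item \ref{GS_DOIS}, excluding multi-bubble splitting --- precisely where \eqref{selfsimilar}, the dilation invariance of the Hardy term, and the reduction to radial functions via Remark~\ref{r_radial_Dcompact} (which removes the translational profiles) are indispensable --- together with justifying the Pohozaev identity for the inverse-square potential, which is the role of Proposition~\ref{pohozaev_id}.
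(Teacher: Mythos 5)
Your overall strategy coincides with the paper's: for item \ref{GS_UM}, a minimizing sequence of critical points, the translation profile decomposition of Theorem \ref{teo_tinta_frac_sub}, the verification that each profile is again a critical point, and the lower bound $\|v\|_V\ge\rho_0$ together with nonnegativity of $\mathcal{F}$ to conclude; for item \ref{GS_DOIS}, reduction to a radial minimizing sequence, the dilation-only profile decomposition (Remark \ref{r_radial_Dcompact} removing translations), strict subadditivity of $t\mapsto t^{2/2_s^\ast}$ to exclude splitting, and then Lagrange multipliers plus the Pohozaev identity of Corollary \ref{cor_pohozaev_hardy} to rescale the minimizer into a ground state. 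Item \ref{GS_UM} and the bulk of item \ref{GS_DOIS} are sound; the paper closes \ref{GS_UM} with Fatou applied to $I(u_k)=\int_{\mathbb{R}^N}\mathcal{F}(x,u_k)\dx$ rather than a full splitting, but your route via Corollary \ref{decomp_auto_lemma}, Proposition \ref{p_l2} and \eqref{seis.tres.sub} works equally well.

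The one genuine gap is your justification of the reduction to radial minimizing sequences in item \ref{GS_DOIS}. Polarizations with respect to hyperplanes through the origin do preserve $\int_{\mathbb{R}^N}|x|^{-2s}u^2\dx$ and $\int_{\mathbb{R}^N}F(u)\dx$ and do not increase the Gagliardo seminorm, but iterating them does not produce the radially decreasing rearrangement: their limit is the foliated Schwarz (cap) symmetrization, which rearranges $u$ on each sphere $\{|x|=r\}$ into a function decreasing in the polar angle. The resulting functions are axially symmetric, not radial --- a bump concentrated near a point of the unit sphere stays on the unit sphere --- so you cannot conclude that $\mathcal{I}_\lambda$ equals its radial counterpart this way, and without radiality Remark \ref{r_radial_Dcompact} is unavailable and translation profiles reappear in the decomposition. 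The paper instead applies the full Schwarz symmetrization $u\mapsto u^\ast$: the fractional P\'olya--Szeg\H{o} inequality of Beckner gives $[u^\ast]_s\le[u]_s$, equimeasurability gives $\int_{\mathbb{R}^N}F(u^\ast)\dx=\int_{\mathbb{R}^N}F(u)\dx$, and --- the point your polarization was trying to capture --- the Hardy term satisfies $\int_{\mathbb{R}^N}|x|^{-2s}(u^\ast)^2\dx\ge\int_{\mathbb{R}^N}|x|^{-2s}u^2\dx$ by the Hardy--Littlewood inequality, because $|x|^{-2s}$ is radially symmetric decreasing; hence $Q(u^\ast)\le Q(u)$ for your quadratic form $Q$. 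Replacing your polarization sentence with this symmetrization step completes item \ref{GS_DOIS}.
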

Theorem \ref{teoremao} take into account the invariance of $I$ under the action of translations and dilations in $H^s(\mathbb{R}^N)$ and $\mathcal{D}^{s,2}(\mathbb{R}^N),$ to obtain the concentration-compactness of Palais-Smale and minimizing sequences in each case respectively. These properties are sufficient to ensure existence of ground states of \eqref{P}. Our results improve and complement \cite{reinaldo} for the fractional framework, we consider potential $a(x)$ and nonlinearity $F(x,t)$ which can change sign. In Theorem \ref{teoremao} (ii),  we do not require the Ambrosetti-Rabinowitz condition \eqref{A-R}. Our argument to prove Theorem \ref{teoremao} (ii) involves a Pohozaev type identity, and as usual we require $C^1$-- regularity of $f(t)$.
\begin{theorem}\label{teo_GS_alt}
Nontrivial weak solutions in $H^s _V (\mathbb{R}^N)$ of \eqref{P} at the mountain pass level are ground states. Precisely, for the Nehari manifold $\mathcal{N} = \left\lbrace u \in H^s _{V} (\mathbb{R}^N) \setminus \{0 \} : I'(u)\cdot u = 0\right\rbrace,$ consider
\begin{equation*}
\bar{c} (I) := \inf _{u \in H^s _{V} (\mathbb{R}^N ) \setminus \{0\} } \sup_{t \geq 0} I (t u)\quad \text{and}\quad c _{\mathcal{N} } (I) := \inf_{u \in \mathcal{N} } I(u).
\end{equation*}
Assume that $V(x)\in L^1 _{\loca}(\mathbb{R}^N),$  $a(x)=V(x) - b(x)$ satisfies \eqref{V_sirakov}--\eqref{V_be} and $f(x,t)$ fulfill \eqref{bem_def}--\eqref{A-R}. Moreover, suppose that
\begin{equation}\label{increasing_total}
t\mapsto \frac{f (x,t)}{|t|}\text{ is strict increasing in }\mathbb{R}, \text{ a.e. }x \in \mathbb{R}^N.
\end{equation}
Then $c(I) = \bar{c} (I) = c _{\mathcal{N} } (I).$ 
\end{theorem}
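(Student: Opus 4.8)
The plan is to establish the chain $c_{\mathcal N}(I)\le c(I)\le\bar c(I)=c_{\mathcal N}(I)$, in which the outer equalities come from a fibering (Nehari) analysis of $I$ and the middle inequality from the elementary facts that every ray $t\mapsto tu$ is an admissible path in $\Gamma_I$ while every path in $\Gamma_I$ must cross $\mathcal N$. No compactness of Palais--Smale sequences is involved.

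First I would study, for fixed $u\in H^s_V(\mathbb{R}^N)\setminus\{0\}$, the fibering map $\varphi_u(t):=I(tu)$, $t\ge0$. Setting $\|u\|_{\ast}^2:=\|u\|_V^2-\int_{\mathbb{R}^N}b(x)u^2\dx$ --- which is an equivalent norm on $H^s_V(\mathbb{R}^N)$, since \eqref{V_sirakov}--\eqref{V_be}, H\"older's inequality and the definition of $\mathcal C_V^{(\beta)}$ give $\int_{\mathbb{R}^N}b(x)u^2\dx\le\theta\|u\|_V^2$ with $\theta<1$ --- one has $\varphi_u'(t)=t\big(\|u\|_{\ast}^2-h_u(t)\big)$, where $h_u(t):=\int_{\mathbb{R}^N}\tfrac{f(x,tu)}{tu}\,u^2\dx$. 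The point is that the strict monotonicity \eqref{increasing_total}, being stated for $\tau\mapsto f(x,\tau)/|\tau|$ on \emph{all} of $\mathbb{R}$, forces $t\mapsto f(x,tu(x))/(tu(x))$ to be strictly increasing on $(0,\infty)$ pointwise, irrespective of the sign of $u(x)$; hence $h_u$ is strictly increasing, with $h_u(0^+)=0$ by \eqref{bem_def} and $h_u(t)\to+\infty$ by \eqref{A-R}. Consequently $\varphi_u$ has a unique critical point $t_u>0$, which is its global maximum on $[0,\infty)$; moreover $t_uu\in\mathcal N$, $I(tu)\to-\infty$ as $t\to\infty$, and $t_v=1$ whenever $v\in\mathcal N$.

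Granting this, $\bar c(I)=c_{\mathcal N}(I)$ follows at once: for $u\ne0$, $\sup_{t\ge0}I(tu)=I(t_uu)\ge\inf_{\mathcal N}I=c_{\mathcal N}(I)$, so $\bar c(I)\ge c_{\mathcal N}(I)$; and for $v\in\mathcal N$, $I(v)=\sup_{t\ge0}I(tv)\ge\bar c(I)$, so $c_{\mathcal N}(I)\ge\bar c(I)$. Next, each ray $\gamma(t)=tu$ ($u\ne0$) lies in $\Gamma_I$ by the previous paragraph, hence $c(I)\le\sup_{t\ge0}I(tu)$ and, taking the infimum over $u$, $c(I)\le\bar c(I)$. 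Finally, to get $c(I)\ge c_{\mathcal N}(I)$ I would show each $\gamma\in\Gamma_I$ meets $\mathcal N$: by \eqref{bem_def} and the continuous Sobolev embeddings (see Proposition~\ref{welldef}) there is $r>0$ with $I'(w)\cdot w>0$ whenever $0<\|w\|_V\le r$, so at the last time $t_0$ at which $\|\gamma(t_0)\|_V=r$ one has $I'(\gamma(t_0))\cdot\gamma(t_0)>0$ and $\gamma(t)\ne0$ for all $t\ge t_0$; on the other hand, picking $T>t_0$ with $I(\gamma(T))<0$ and using the identity $I(w)-\tfrac1\mu I'(w)\cdot w=(\tfrac12-\tfrac1\mu)\|w\|_{\ast}^2+\int_{\mathbb{R}^N}\!\big(\tfrac1\mu f(x,w)w-F(x,w)\big)\dx\ge(\tfrac12-\tfrac1\mu)\|w\|_{\ast}^2$ coming from \eqref{A-R}, we get $I'(\gamma(T))\cdot\gamma(T)<0$; continuity of $t\mapsto I'(\gamma(t))\cdot\gamma(t)$ on $[t_0,T]$ then yields $t^\ast$ with $\gamma(t^\ast)\in\mathcal N$, whence $\sup_{t\ge0}I(\gamma(t))\ge c_{\mathcal N}(I)$ and, after the infimum over $\gamma$, $c(I)\ge c_{\mathcal N}(I)$. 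Combining the three relations closes the loop.

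The only real difficulty is technical and concentrated in the fibering analysis: one must verify that the strict monotonicity \eqref{increasing_total}, formulated on the whole line, genuinely transmits to $h_u$ when $u$ is sign-changing and in the presence of the subtracted term $b(x)$ (which, being independent of $t$, contributes only the fixed term $\tfrac{t^2}{2}\|u\|_{\ast}^2$ to $\varphi_u$), and one must justify $h_u(0^+)=0$, $h_u(t)\to\infty$ and differentiation under the integral sign via \eqref{bem_def}, the embeddings of $H^s_V(\mathbb{R}^N)$ and dominated convergence; along the way one also records that $\mathcal N$ is bounded away from $0$ in $\|\cdot\|_V$, so that $c_{\mathcal N}(I)>0$.
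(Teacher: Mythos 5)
Your proposal is correct and rests on the same Nehari/fibering backbone as the paper's proof (unique maximum $t_u$ of $t\mapsto I(tu)$ via \eqref{increasing_total}, $t_uu\in\mathcal N$, $\mathcal N$ bounded away from $0$, rays in $\Gamma_I$, paths crossing $\mathcal N$), but two of your sub-arguments genuinely diverge from the paper's. First, you obtain $\bar c(I)=c_{\mathcal N}(I)$ directly from the uniqueness of the fibering maximum together with $t_v=1$ for $v\in\mathcal N$, whereas the paper first proves continuity of $u\mapsto\tau(u)$ and that $\eta|_{\mathcal S}$ is a homeomorphism of the unit sphere onto $\mathcal N$ (its Steps 2 and 5); your route skips that machinery entirely, at the cost of not producing the parametrization $\mathcal S\cong\mathcal N$, which is the usual extra dividend of the Nehari method but is not needed for the equality of the three levels. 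Second, for the crossing property $\gamma([0,\infty))\cap\mathcal N\neq\emptyset$ you apply the intermediate value theorem to $t\mapsto I'(\gamma(t))\cdot\gamma(t)$, using that this quantity is positive on the small sphere $\|w\|_V=r$ (by \eqref{bem_def} and the equivalence $\|u\|_\ast^2\geq(1-\|b\|_\beta/\mathcal C_V^{(\beta)})\|u\|_V^2$) and negative wherever $I<0$ (since \eqref{A-R} gives $I'(w)\cdot w\leq\mu I(w)$); the paper instead argues by contradiction through the continuity of $\tau$ and the radius $R_{\mathcal N}$, invoking \eqref{increasing_total} a second time. Your version is more elementary and self-contained. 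The only points you flag as "technical" are indeed the ones that need care—chiefly that \eqref{increasing_total} transmits to $t\mapsto f(x,tu(x))/(tu(x))$ for $u(x)<0$ (which it does, since $f(x,\tau c)/(\tau c)=-f(x,\tau c)/|\tau c|$ with $\tau c$ decreasing), and that $h_u(t)\to\infty$, which requires the positivity $F(x,t)>0$ for $t\neq0$; this follows from \eqref{increasing_total} and \eqref{bem_def} (so that $f(x,t)t>0$ for $t\neq0$) exactly as the paper implicitly uses in its Step 2. I see no gap.
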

Theorem \ref{teo_GS_alt} improves some results in \cite{secchi} since we deal with the case where $a(x)$ may changes sign and is not necessarily bounded from below, also with nonlinearity having the behavior at $0$ described by \eqref{bem_def'}. Moreover, Theorem \ref{teo_GS_alt} proves the existence of ground state by replacing the aforementioned invariance by  \eqref{increasing_total}. In fact, our results below give some conditions that guarantee existence of nontrivial weak solutions in $H^s_V(\mathbb{R}^N)$ at the mountain pass level. 

Our next results are on the existence of weak solutions of \eqref{P} at the mountain-pass level by using the concentration-compactnes principle.
\begin{theorem}\label{teo_sub}
Assume that \eqref{bem_def}--\eqref{posi_algum} or \eqref{posi_algum}--\eqref{dife_esti} hold; and additionally \eqref{lim_infinito}. Suppo\-se also that $a(x)$ and $f(x,t)$ satisfy either one of the following conditions
\begin{enumerate}[label=(\roman*)]
\item\label{teo_sub_UM} $b(x)\equiv 0,$ \eqref{V_pe}--\eqref{V_sirakov}, \eqref{increasing} and \eqref{minimax_comper}; or
\item\label{teo_sub_DOIS} $V(x)\geq 0,$ $b(x)$ has compact support, \eqref{V_sirakov}--\eqref{V_as}, \eqref{lim_infinito_infty} and \eqref{minimax_comper'}; or
\item\label{teo_sub_TRES} Replace condition \eqref{minimax_comper} $($respectively \eqref{minimax_comper'}$)$ in \ref{teo_sub_UM} $($respectively \ref{teo_sub_DOIS}$)$ by 
\begin{equation}\label{replace_improved}
I(u)  \leq I_\mathcal{P} (u) \quad (\text{respectively }I(u) \leq I_\infty (u) ),\quad\forall \, u \in H^s_{V}(\mathbb{R}^N).
\end{equation}
\end{enumerate}
Then Eq. \eqref{P} possess a nontrivial weak solution $u$ in $H^s_{V} (\mathbb{R}^N)$ at the mountain pass level, that is, $I(u)=c(I).$ Moreover, under the assumptions of items \ref{teo_sub_UM} and \ref{teo_sub_DOIS}, any sequence $(u_k)$ in $H^s_{V} (\mathbb{R}^N)$ such that $I (u_k) \rightarrow c(I)$ and $I' (u_k) \rightarrow 0$ has a convergent subsequence.
\end{theorem}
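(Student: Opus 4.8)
The plan is to deduce existence of a mountain pass solution from the standard mountain pass geometry together with the concentration-compactness description of Palais-Smale sequences afforded by Theorem \ref{teo_tinta_frac_sub}. First I would verify that $I$ (resp. the modified functional in the cases where $b$ is present) has the mountain pass geometry: using \eqref{bem_def} and \eqref{V_be} one gets a strictly positive mountain at the origin on a small sphere of $H^s_V(\mathbb{R}^N)$, while \eqref{posi_algum} (or \eqref{quadratic}) and \eqref{A-R} produce a ray along which $I\to-\infty$, so $\Gamma_I\neq\emptyset$ and $c(I)>0$. One then picks a Palais-Smale sequence $(u_k)$ at level $c(I)$; boundedness of $(u_k)$ in $H^s_V(\mathbb{R}^N)$ follows from \eqref{A-R} in the usual way (or from \eqref{dife_posi}--\eqref{dife_esti} in the oscillatory case, which is exactly why those hypotheses are imposed as a substitute for Ambrosetti-Rabinowitz). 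Applying Theorem \ref{teo_tinta_frac_sub} to $(u_k)$ yields profiles $w^{(n)}$ and translations $y_k^{(n)}\in\mathbb{Z}^N$ with the decomposition \eqref{seis.um.sub}--\eqref{seis.quatro.sub}.

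The heart of the argument is the analysis carried out in Section \ref{s_variational}: one must show that each nonzero profile $w^{(n)}$ is a weak solution of the \emph{shifted} problem — the problem with potential $V(\cdot+y_k^{(n)})$ and nonlinearity $f(\cdot+y_k^{(n)},\cdot)$ in the limit, which under \eqref{V_as} (resp. \eqref{V_pe}, $\mathbb{Z}^N$-periodicity) and \eqref{lim_infinito} is either the periodic problem governed by $I_\mathcal{P}$ or the limit-at-infinity problem governed by $I_\infty$ — and that the energy splits,
\begin{equation*}
c(I)=\sum_{n\in\mathbb{N}_0} E^{(n)}\bigl(w^{(n)}\bigr),
\end{equation*}
where $E^{(n)}$ is the energy functional of the $n$-th limit problem and each nonzero $w^{(n)}$ satisfies $E^{(n)}(w^{(n)})\geq c(I_\mathcal{P})$ (resp. $\geq c(I_\infty)$). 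Here the subcritical convergence \eqref{seis.quatro.sub} in $L^p$, $2<p<2^*_s$, together with \eqref{bem_def}, is what lets one pass to the limit in the nonlinear terms; the quadratic terms are handled by weak lower semicontinuity and the orthogonality \eqref{seis.dois.sub}. The Brezis-Lieb type splitting, combined with \eqref{dife_posi}, forces each nonzero term in the sum to be bounded below by the limit minimax level.

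Now the comparison hypothesis \eqref{minimax_comper}, $c(I)<c(I_\mathcal{P})$ (resp. \eqref{minimax_comper'}), kicks in: it rules out the possibility that $w^{(1)}=0$ with some $w^{(n)}\neq0$ for $n\geq2$ (which would give $c(I)\geq c(I_\mathcal{P})$), and more generally forces $\mathbb{N}_0=\{1\}$ with $w^{(1)}\neq0$; then $w:=w^{(1)}$ is a nontrivial critical point of $I$ itself (since $y_k^{(1)}=0$), and \eqref{seis.tres.sub} together with the energy splitting gives $u_k\to w$ strongly, hence $I(w)=c(I)$. In case \ref{teo_sub_TRES}, the pointwise inequality \eqref{replace_improved} directly yields $c(I)\leq c(I_\mathcal{P})$, and one checks the strict inequality is automatic once a nontrivial solution is known to exist, or argues directly that the profile at infinity cannot carry all the energy. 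The strict inequality \eqref{increasing} (resp. the $C^1$ hypothesis \eqref{lim_infinito_infty}) is used to guarantee that the limit problem actually \emph{has} a ground state realizing $c(I_\mathcal{P})$ (resp. $c(I_\infty)$), which is what makes the lower bound $E^{(n)}(w^{(n)})\geq c(I_\mathcal{P})$ meaningful; without it the limit minimax level might not be attained. For the final assertion — that \emph{every} such Palais-Smale sequence is relatively compact — one repeats the argument verbatim: the same dichotomy shows no energy can escape to infinity, so $u_k$ converges strongly along a subsequence, and since the limit is the same critical level the full sequence is precompact.

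\textbf{Main obstacle.} The delicate point is establishing that each profile $w^{(n)}$ solves the correct limit equation and that the energies add up exactly, in the oscillatory setting where \eqref{A-R} is replaced by \eqref{dife_posi}--\eqref{dife_esti}: there one cannot use the Ambrosetti-Rabinowitz inequality either to get boundedness or to control the nonlinear terms, and must instead exploit the integrability estimate \eqref{dife_esti} (with $p_0>\max\{1,N/2s\}$) to upgrade weak to strong convergence of $f(\cdot,u_k)$ in the appropriate dual space, uniformly in the translations $y_k^{(n)}$. Making this uniform in $n$ — so that the infinite sum in the energy splitting is legitimate — is where the uniform convergence of the series in \eqref{seis.quatro.sub} and a careful tail estimate are essential.
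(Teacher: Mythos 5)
Your overall strategy (bounded PS sequence, profile decomposition via Theorem \ref{teo_tinta_frac_sub}, energy splitting over the profiles, then the comparison of minimax levels to kill the profiles with $n\geq 2$) is the paper's strategy. But there are two concrete gaps. First, the crucial lower bound $I_\mathcal{P}(w^{(n_0)})\geq c(I_\mathcal{P})$ (resp.\ $I_\infty(w^{(n_0)})\geq c(I_\infty)$) for a nontrivial profile is asserted but not justified, and you misattribute the role of \eqref{increasing} and \eqref{lim_infinito_infty}: they are \emph{not} there to guarantee that the limit problem has a ground state (the argument never needs $c(I_\mathcal{P})$ to be attained). In the paper, once $w^{(n_0)}$ is shown to be a critical point of the limit functional, \eqref{increasing} guarantees (via Remark \ref{r_minimax}--(iv)) that the ray $t\mapsto tw^{(n_0)}$ is an admissible path in $\Gamma_{I_\mathcal{P}}$ whose maximum is attained exactly at $t=1$, whence $I_\mathcal{P}(w^{(n_0)})=\max_{t\geq 0}I_\mathcal{P}(tw^{(n_0)})\geq c(I_\mathcal{P})$; in case \ref{teo_sub_DOIS}, where no monotonicity on $f_\infty$ is assumed, the same role is played by the dilation path $t\mapsto w^{(n_0)}(\cdot/t)$ together with the Pohozaev identity (Corollary \ref{pohozaev_standard}), and this is precisely why the $C^1$ regularity in \eqref{lim_infinito_infty} is needed. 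Without one of these two mechanisms the inequality $E^{(n)}(w^{(n)})\geq c(I_\mathcal{P})$ does not follow from \eqref{dife_posi}, which only gives nonnegativity of each term. Also, the splitting should be an inequality $c(I)\geq I(w^{(1)})+\sum_{n>1}I_\mathcal{P}(w^{(n)})$ coming from \eqref{seis.tres.sub} and Propositions \ref{comp1} and \ref{p_l2}, not an equality; the inequality suffices.

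Second, case \ref{teo_sub_TRES} is not handled. There \eqref{replace_improved} only yields $c(I)\leq c(I_\mathcal{P})$, and equality can occur; your suggestion that ``the strict inequality is automatic once a nontrivial solution is known to exist'' is circular and false — when $c(I)=c(I_\mathcal{P})$ the PS sequence need not be precompact (which is why the theorem's final compactness assertion is restricted to items \ref{teo_sub_UM} and \ref{teo_sub_DOIS}). The paper resolves this by showing that if some $w^{(n_0)}\neq 0$ escapes, then the chain of inequalities $c(I)\leq \max_{t}I(\zeta^{(n_0)}(t))\leq \max_t I_\mathcal{P}(\zeta^{(n_0)}(t))=I_\mathcal{P}(w^{(n_0)})\leq c(I)$ forces the minimax level $c(I)$ to be attained along an explicit path, and then invokes the Lins--Silva result \cite[Theorem~2.3]{lins} to produce a critical point at level $c(I)$ without compactness. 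Some substitute for this step is indispensable in case \ref{teo_sub_TRES}.
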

Theorems \ref{teoremao} \ref{GS_UM} and \ref{teo_sub} extend and complement some results of \cite{reinaldo,secchi,tintapaper} for the fractional framework. In Theorem \ref{teo_sub} the potential $a(x) = V(x) - b(x)$ is not necessarily bounded from below and in Theorem \ref{teo_sub} \ref{teo_sub_DOIS} we do not ask \eqref{increasing} as it was made in these works.
\begin{theorem}\label{teo_crit}
Assume that $f(x,t)$ and $a(x)\equiv V(x)$ satisfy \eqref{bem_def_ast}--\eqref{increasing_crit}, \eqref{suficient_ast}, \eqref{A-R}--\eqref{posi_algum} and \eqref{V_sirakov_ast}--\eqref{V_conve_seq_crit} respectively. Then Eq. \eqref{P} has a nontrivial weak solution in $\mathcal{D}^{s,2}(\mathbb{R}^N)$ at the mountain pass level. If we assume additionally condition \eqref{suficient_ast_0}, then any sequence $(u_k)$ in $\mathcal{D}^{s,2}(\mathbb{R}^N) $ such that $I_\ast (u_k) \rightarrow c(I_\ast)$ and $I_\ast' (u_k) \rightarrow 0$ has a convergent subsequence.
\end{theorem}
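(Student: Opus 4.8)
The structure of the proof I would follow is a concentration--compactness argument, applied to a Palais--Smale sequence of $I_\ast$, based on the dilation--translation profile decomposition of Theorem~\ref{teo_tinta_frac}. First, using \eqref{bem_def_ast}, the embedding $\mathcal D^{s,2}(\mathbb R^N)\hookrightarrow L^{2_s^\ast}(\mathbb R^N)$ and the equivalence of $\|\cdot\|_V$ with $[\,\cdot\,]_s$ granted by \eqref{V_sirakov_ast}, $I_\ast$ is bounded below by a positive constant on a small sphere; from \eqref{A-R}--\eqref{posi_algum} one produces $u_0$ with $I_\ast(tu_0)\to-\infty$, so $c(I_\ast)>0$ and the mountain--pass theorem (without the $(PS)$ condition) yields $(u_k)\subset\mathcal D^{s,2}(\mathbb R^N)$ with $I_\ast(u_k)\to c(I_\ast)$, $I_\ast'(u_k)\to0$. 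The Ambrosetti--Rabinowitz inequality \eqref{A-R} applied to $I_\ast(u_k)-\tfrac1\mu I_\ast'(u_k)\cdot u_k$ forces $(u_k)$ to be bounded.

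Next I apply Theorem~\ref{teo_tinta_frac} with the $\gamma$ of \eqref{limites_ast}, getting profiles $w^{(n)}$ ($n\in\mathbb N_\ast$), shifts $y_k^{(n)}\in\mathbb Z^N$ and dilations $j_k^{(n)}\in\mathbb Z$, with $y_k^{(1)}=0=j_k^{(1)}$, $j_k^{(n)}=0$ on $\mathbb N_0$ and $j_k^{(n)}\to\pm\infty$ on $\mathbb N_\pm$. Passing to a subsequence so that $\gamma^{j_k^{(n)}}y_k^{(n)}$ converges in $[-\infty,+\infty]^N$ for each $n$, I insert rescaled test functions into $I_\ast'(u_k)$ and let $k\to\infty$: since $\lim_{|x|\to\infty}V(x)=0$ and the rescaled potentials $\gamma^{-2sj_k^{(n)}}V(\gamma^{-j_k^{(n)}}\cdot+y_k^{(n)})$ converge uniformly on compacta either to $0$ or to a translate of $V_+$ or $V_-$ --- exactly the content of \eqref{V_limits_ast}--\eqref{V_conve_seq_crit} --- while the rescaled nonlinearities converge to $f_0,f_+$ or $f_-$ by \eqref{limites_ast}, each $w^{(n)}$ comes out to be a critical point of a limit functional $J^{(n)}(u)=\tfrac12\int_{\mathbb R^N}|(-\Delta)^{s/2}u|^2+V_{\kappa(n)}(x)u^2\dx-\int_{\mathbb R^N}F_{\kappa(n)}(u)\dx$ with $V_{\kappa(n)}\equiv0$ or a (possibly translated) $V_\pm$, $F_{\kappa(n)}\in\{F_0,F_+,F_-\}$, and $J^{(1)}=I_\ast$. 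A Brezis--Lieb type splitting --- using \eqref{f_extra}, the vanishing of the remainder in $L^{2_s^\ast}$ from \eqref{seis.quatro}, and the self--similarity of $F_\pm$ forced by \eqref{limites_ast} --- then gives
\[
c(I_\ast)=\lim_k\Big(I_\ast(u_k)-\tfrac12\,I_\ast'(u_k)\cdot u_k\Big)\ \ge\ \sum_{n\in\mathbb N_\ast}J^{(n)}(w^{(n)}).
\]

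Each $J^{(n)}$ inherits \eqref{A-R}, \eqref{posi_algum_auto} (through \eqref{limites_ast}) and the monotonicity \eqref{increasing_crit}; hence it has the mountain--pass geometry, and as for autonomous functionals with strictly increasing quotient (the argument of Theorem~\ref{teo_GS_alt}) one has $c(J^{(n)})=\inf_{\mathcal N(J^{(n)})}J^{(n)}$ and every nontrivial critical point lies on $\mathcal N(J^{(n)})$, so $J^{(n)}(w^{(n)})\ge c(J^{(n)})$ whenever $w^{(n)}\neq0$. Moreover \eqref{suficient_ast} gives, after undoing the translation, $I_\ast\le\widetilde J^{(n)}$ pointwise on $\mathcal D^{s,2}(\mathbb R^N)$ (use $V\le V_{\kappa(n)}$, or $V\le0$ when $V_{\kappa(n)}\equiv0$, together with $F_{\kappa(n)}\le F$), and since the minimax level is translation invariant, $c(J^{(n)})=c(\widetilde J^{(n)})\ge c(I_\ast)$. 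Inserting these bounds in the displayed inequality leaves at most one nonzero profile; all profiles cannot vanish, else \eqref{seis.quatro} and \eqref{bem_def_ast} would give $I_\ast(u_k)\to0$, contradicting $c(I_\ast)>0$. So exactly one $w^{(n_0)}\neq0$, and $c(I_\ast)\ge J^{(n_0)}(w^{(n_0)})\ge c(J^{(n_0)})\ge c(I_\ast)$ collapses to equalities. To obtain a solution of \eqref{P} itself, let $\widetilde w$ be $w^{(n_0)}$ moved into the frame of $\widetilde J^{(n_0)}$; then $\widetilde J^{(n_0)}(\widetilde w)=c(I_\ast)=\sup_{t\ge0}\widetilde J^{(n_0)}(t\widetilde w)$, the supremum being attained only at $t=1$ by \eqref{increasing_crit}. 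Since $I_\ast\le\widetilde J^{(n_0)}$, the path $t\mapsto t\widetilde w$ lies in $\Gamma_{I_\ast}$, and the maximum of $t\mapsto I_\ast(t\widetilde w)$ is attained at some $t^\ast>0$, so
\[
c(I_\ast)\le\sup_{t\ge0}I_\ast(t\widetilde w)=I_\ast(t^\ast\widetilde w)\le\widetilde J^{(n_0)}(t^\ast\widetilde w)\le\widetilde J^{(n_0)}(\widetilde w)=c(I_\ast);
\]
equality throughout forces $t^\ast=1$, hence $I_\ast(\widetilde w)=\widetilde J^{(n_0)}(\widetilde w)$, so $\widetilde w$ is a global maximizer of $I_\ast-\widetilde J^{(n_0)}\le0$ and therefore $I_\ast'(\widetilde w)=(\widetilde J^{(n_0)})'(\widetilde w)=0$; thus $\widetilde w\neq0$ solves \eqref{P} with $I_\ast(\widetilde w)=c(I_\ast)$.

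Finally, if \eqref{suficient_ast_0} holds, the domination $I_\ast\le\widetilde J^{(n)}$ becomes strict enough that $c(I_\ast)<c(\widetilde J^{(n)})=c(J^{(n)})$ for every $n\neq1$ (the standard strict comparison of minimax levels under a strict pointwise inequality), so on any $(PS)_{c(I_\ast)}$ sequence the counting above forces $w^{(n)}=0$ for all $n\neq1$, whence $u_k-w^{(1)}\to0$ in $L^{2_s^\ast}$; since $I_\ast'(u_k)\to0$ and $I_\ast'(w^{(1)})=0$, writing $v_k=u_k-w^{(1)}\rightharpoonup0$ one gets $I_\ast'(u_k)\cdot v_k=\|v_k\|_V^2+(w^{(1)},v_k)_V-\int_{\mathbb R^N}f(x,u_k)v_k\dx\to0$ with the last two terms vanishing, so $\|v_k\|_V\to0$, i.e.\ $u_k\to w^{(1)}$ in $\mathcal D^{s,2}(\mathbb R^N)$. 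The main obstacle is the second step: matching each profile with the correct limit functional and proving the energy identity require tight control of the rescaled potentials $\gamma^{-2sj_k^{(n)}}V(\gamma^{-j_k^{(n)}}\cdot+y_k^{(n)})$ across the regimes $j_k^{(n)}\to0,\pm\infty$ in combination with the lattice shifts --- precisely what \eqref{V_limits_ast}--\eqref{V_conve_seq_crit} are built to supply --- while the critical nonlinearity is handled through \eqref{f_extra}; the transfer argument that recovers a genuine solution of \eqref{P} from a possibly rescaled surviving profile is the other delicate point and is where \eqref{suficient_ast} and \eqref{increasing_crit} are indispensable.
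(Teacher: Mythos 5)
Your argument follows essentially the same route as the paper's proof: a bounded PS sequence from \eqref{A-R}, the dilation--translation profile decomposition of Theorem \ref{teo_tinta_frac}, the identification of the limit functional attached to each profile according to whether $\gamma^{j_k^{(n)}}y_k^{(n)}$ stays bounded (the paper's set $\mathbb{N}_\sharp$) and whether $j_k^{(n)}\to\pm\infty$, the energy splitting inequality, and the comparison of minimax levels along the ray $t\mapsto t\,\widetilde w$ through a surviving profile using \eqref{suficient_ast}--\eqref{suficient_ast_0}. Two points of divergence are worth recording. First, in the borderline case where only \eqref{suficient_ast} holds and the chain of inequalities collapses to equalities, the paper concludes by invoking \cite[Theorem~2.3]{lins} once $c(I_\ast)$ is attained along an admissible path, whereas you give a self-contained argument: since $I_\ast-\widetilde J^{(n_0)}\le 0$ everywhere and this difference attains its maximum $0$ at the translated profile $\widetilde w$, its derivative vanishes there and $I_\ast'(\widetilde w)=(\widetilde J^{(n_0)})'(\widetilde w)=0$; this is a clean substitute for the citation. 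Second, a small imprecision: you justify $J^{(n)}(w^{(n)})\ge c(J^{(n)})$ for \emph{every} nonzero profile via the Nehari characterization under \eqref{increasing_crit}, but for $n=1$ the functional $J^{(1)}=I_\ast$ is neither autonomous nor assumed to have an increasing quotient $t\mapsto f(x,t)/|t|$; fortunately the counting argument only needs $I_\ast(w^{(1)})>0$ whenever $w^{(1)}\neq 0$, which follows directly from \eqref{A-R} applied to the critical point $w^{(1)}$, so this does not affect the conclusion.
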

Theorems \ref{teoremao} \ref{GS_DOIS} and \ref{teo_crit} complement the study made in \cite{peral,tintahardy}. Theorem \ref{teo_crit} can be seen as a nonlocal version of \cite[Theorem 5.2]{tintahardy}, since we take into account that the critical non\-linearity is not autonomous. It also can be seen as a complement for many results in the literature about existence of nontrivial weak solution for Schr\"{o}dinger equation involving critical nonlinearities and singular potentials (cf. \cite{terracini, smets,felli_pistoia,felli_susanna} and the references given there), because we consider a general class that include as a particular case the inverse fractional square potential \eqref{par_hardy}.
\subsubsection{Remark on the hypothesis}
\begin{remark}\label{remarkao} Some comments on our assumptions are in order.
\begin{enumerate} [label=(\roman*)]
\item Assumption \eqref{bem_def} can be seen as a subcritical version of \eqref{selfsimilar} in the sense that it is oscillating about a subcritical power $|t|^{p-2}t,$ $2<p<2_s^\ast$ (see \cite{tintapaper} for the local case). We can see that \eqref{bem_def} holds if $f(x,t)$ satisfies the following conditions:
\begin{flalign}\tag{$f_1'$}\label{bem_def'}
	\hspace{1cm}
	\lim_{t\rightarrow 0}\frac{f(x,t)}{|t|+|t|^{2_s^\ast-1}}=0,\text{ uniform in }x.
	&&
\end{flalign}
\begin{flalign}\tag{$f_1''$}\label{bem_def''}
	\hspace{1cm}
\begin{aligned}
&\exists \ \varrho(t) \in C( \mathbb{R}\setminus \{0\})\cap L^\infty ( \mathbb{R}) \text{ s.t. }2<\inf_{t \in \mathbb{R}}\varrho (t)\leq  \sup_{t \in \mathbb{R}}\varrho(t)<2_s ^\ast\text{ and }\\
&|f(x,t)|\leq C(1+|t|^{\varrho(t)-1})\quad \text{a.e. } x \in \mathbb{R}^N,\ \forall \,  t \in \mathbb{R};
\end{aligned}
&&
\end{flalign}
Notice that $f(x,t)=k(x)\left[ \varrho'(t) (\ln |t| t) + \varrho (t)\right] |t|^{\varrho(t) -2}t,\ f(x,0)\equiv 0,$ satisfies \eqref{bem_def'}--\eqref{bem_def''}, for
\begin{equation*}
\varrho(t) = \frac{2_s ^\ast -2}{16}\sin\left( \ln( |\ln|t| |)\right)+\frac{5 2_s ^\ast+6 }{8}\quad\text{and}\quad0\leq  k(x)\in C(\mathbb{R})\cap L^\infty(\mathbb{R}^N).
\end{equation*}

\item Conditions \eqref{quadratic} and \eqref{dife_esti} imply that (see \cite[Lemma 2.1]{reinaldo}) there exists $p \in (2,2_s ^\ast)$ such that $\forall \ \varepsilon,\ \exists \ C_\varepsilon>0 \text{ s.t. }
|f(x,t)|\leq \varepsilon |t| + C_\varepsilon |t|^{p -1},\ \text{a.e. } x \in \mathbb{R}^N,\ \forall \,  t \in \mathbb{R}.$ Note that this is a special case of \eqref{bem_def}, precisely when $p_\varepsilon = p.$
\item Assumption \eqref{A-R} is the Ambrosetti-Rabinowitz condition which implies the mountain pass geometry and the boundedness of PS sequences for the associated functional (see for instance \cite{Rabi1992}). Conditions \eqref{quadratic}--\eqref{dife_esti} are an alternative for \eqref{A-R}, and was first introduced in \cite{dinglee} for the local case. By similar arguments in \cite{dinglee}, \eqref{dife_esti} holds if we assume \eqref{quadratic}, \eqref{dife_posi} and that there exist $p \in (2,2_s ^\ast)$ and $c_1,c_2,r_1>0$ such that
\begin{equation*}
|f(x,t)|\leq c_1 |t|^{p-1}\quad\text{and}\quad F(x,t)\leq \left(\frac{1}{2} - \frac{1}{c_2|t|^{\nu}} \right)f(x,t)t,\quad\forall \, |t| \geq r_1.
\end{equation*}
where $1<\nu <2$ if $N=1,$ and $1<\nu < N+p - p N/2s$ if $N\geq 2.$
\item In view of the boundedness of PS sequences, we separate our analysis for the subcritical case in two distinct situations: $f(x,t)$ satisfies \eqref{bem_def}--\eqref{posi_algum} or \eqref{posi_algum}--\eqref{dife_esti}. The first one is associated to the case where $f(x,t)$ has an oscillatory behavior around the subcritical power and the second one refers to the case where $f(x,t)$ does not satisfy the Ambrosetti-Rabinowitz condition.
\item In \cite{reinaldo}, considering the local case of Schr\"odinger equations with asymptotically periodic terms, it was proved the mountain pass geometry assuming $F(x,t) > 0$ for all $(x,t) \in \mathbb{R}^N \times \mathbb{R}$ and \eqref{quadratic} instead of the classical Ambrosetti-Rabinowitz condition. Here, in this work, we have an improvement even to the local case because we assume \eqref{posi_algum} instead of assuming that $F(x,t) > 0$ for all $(x,t) \in \mathbb{R}^N \times \mathbb{R}.$
\item Assumption \eqref{dife_posi} was used to prove the boundedness of PS sequences at the mountain pass level for the functional of Eq. \eqref{P}. In \cite{reinaldo} to prove similar result the author used the more restrictive condition
$\mathscr{F}(x,t) = \frac{1}{2}f(x,t)t - F(x,t)\geq b(t)t^2,$ for all $(x,t) \in \mathbb{R}^N \times \mathbb{R},$ for some $b(t) \in C(\mathbb{R}\setminus \{0\} , \mathbb{R}^+).$
\item To study the existence of weak solutions of Eq. \eqref{P}, we use \eqref{lim_infinito}, unlike the aforementioned papers, where the authors impose the more tight condition $|f(x,t) - f_\mathcal{P} (x,t)|\leq h(x)|t|^{q-1},$ a.e. $x$ in $\mathbb{R}^N$ and for all $t\text{ in }\mathbb{R},$ for some $h(x)\in C(\mathbb{R}^N)\cap L^\infty (\mathbb{R}^N)$ such that for any $\varepsilon > 0 ,$ the set $\{x \in \mathbb{R}^N : |h(x)| \geq \varepsilon\}$ has finite Lebesgue measure.
\item Condition \eqref{lim_infinito_infty} is used in the literature to prove that weak solutions of Eq. \eqref{P} satisfy a Pohozaev type identity.
\item We prove in Proposition \ref{welldef} that $H^s_V(\mathbb{R}^N)$ is well defined and it is continuously embedded in $H^s(\mathbb{R}^N),$ and consequently, the infimum $\mathcal{C}_V^{(\beta)} $ defined in \eqref{V_be} is strictly positive.
\item The class of functions satisfying \eqref{selfsimilar} can be seen as nonlinearities asymptotically oscillating about the critical power $|t|^{2_s^\ast-2}t$ and was introduced in \cite{paper1,tinta_pos}.
\item The asymptotic additivity in \eqref{f_extra} ensures the convergence of $I_\ast$ under the weak profile decomposition for bounded sequences in $\mathcal{D}^{s,2}(\mathbb{R}^N)$ described in Theorem \ref{teo_tinta_frac} (see also \cite{paper1}).
\item As already mentioned, \eqref{V_sirakov_ast}--\eqref{V_conve_seq_crit} define a class of singular potentials that vanishes at infinite, see Example \ref{r_ex}--(iv).
\item Once the limits in \eqref{V_as}, \eqref{lim_infinito}, \eqref{lim_infinito_infty} or \eqref{limites_ast} exist, to obtain compactness of PS sequences at the minimax levels we need to require the additional conditions over the minimax levels $c_\mathcal{P},c_\infty, c_0, c_+, c_{-}$ given in \eqref{minimax_comper}, \eqref{minimax_comper'}, \eqref{suficient_ast}--\eqref{suficient_ast_0}. In fact, we do not believe that it is possible, in general, to achieve the compactness described in Theorems \ref{teo_sub} and \ref{teo_crit} without these conditions. This approach was introduced by P.-L. Lions in \cite{lionscompcase1,lionscompcase2,lionslimitcase1,lionslimitcase2}.
\item We also consider the case when \eqref{minimax_comper}, \eqref{minimax_comper'}, \eqref{suficient_ast}--\eqref{suficient_ast_0} do not hold. Precisely, when $c(I) = c(I_\mathcal{P})$ or $c(I) = c(I_\infty).$ In this case, we can not use the concentration-compactness argument at the mountain pass level. We apply \cite[Theorem~2.3]{lins} to overcome this difficulty and prove existence of solution at the mountain pass level.
\item For problem \eqref{P} involving critical growth we require \eqref{V_sirakov_ast}--\eqref{V_conve_seq_crit} and \eqref{limites_ast}--\eqref{suficient_ast_0}. These assumptions are suitable for our argument, differently from \eqref{minimax_comper}--\eqref{minimax_comper'}, because the potential that appears in the associated limiting equation depends on the profile decomposition of Theorem \ref{teo_tinta_frac} for a given PS sequence at the mountain pass level (for more details see estimate \eqref{reasonbefore}).
\end{enumerate}
\end{remark}

\begin{remark}
	Under \eqref{V_as} and \eqref{lim_infinito} the next conditions imply that \eqref{minimax_comper} and \eqref{minimax_comper'} hold:
\begin{flalign}\label{suf_cond_sub}\tag{$\mathscr{H}$}
\begin{aligned}
&F_\mathcal{P}(x,t) \leq  F(x,t),\quad \text{a.e. } x \in \mathbb{R}^N ,\ \forall \, t \in \mathbb{R},\text{ and } V(x)\leq V_\infty,\quad\text{a.e. }x\in\mathbb{R}^N.\\
&\text{Moreover, the first inequality holds strictly in some open interval contained the origin}\\
&\text{or the second one holds in a set of positive measure.}
\end{aligned}
&&
\end{flalign}
In Proposition \ref{p_assumption_faz_sentido} we proved the following estimates for the minimax levels, $c(I) \leq c(I_\mathcal{P})$ and $c(I) \leq c(I_\infty).$ Moreover, we proved that under \eqref{suf_cond_sub}, \eqref{minimax_comper} and \eqref{minimax_comper'} hold. We observe that on the corresponding assumption of Theorem \ref{teo_sub}, it is easy to see that \eqref{suf_cond_sub} imply that \eqref{replace_improved} is satisfied.
\end{remark}

\begin{example}\label{r_ex}
Our approach include the following classes of potentials:
\begin{enumerate}[label=(\roman*)]
\item For a potential satisfying assumption \eqref{V_sirakov} and that is not bounded away from zero, consider $0\leq a(x)\equiv V_0(x) \in L^p _{\loca} (\mathbb{R}^N) \cap (C(\mathbb{R}^N\setminus \mathcal{O}),$ where $p\geq 1$ and $\mathcal{O}$ is a countable set, and suppose that $Z = \{x \in \mathbb{R}^N : V(x)=0 \}\neq \emptyset$ is a countable discrete set.
\item Let $V_0(x)$ the potential given above. For a potential the changes sign and satisfies \eqref{V_sirakov}, consider $a(x)\equiv  V_0(x) - \varepsilon,$ where $0<\varepsilon <\mathcal{C}_{V_0}/2.$
\item To study potential of the form $a(x) = V(x) - b(x),$ taking $ 0<\delta <N / \beta,$ $p>N/s$ and 
\begin{equation*}
V(x) =2-\frac{1}{1+|x|^2},\quad V_\infty = 2\quad \text{and}\quad b(x)=
\left\{
\begin{aligned}
\mathcal{C}_b |x|^{-\delta},&\quad\text{if }|x|\leq 1,\\
0,&\quad\text{if }|x|>1.
\end{aligned}
\right.
\end{equation*}
$a(x)=V(x)-b(x)$ satisfies \eqref{V_sirakov}--\eqref{V_as}, $\mathcal{C}_b>0$ is a normalization constant.
\item For potential $a(x) \equiv V(x)$ satisfying \eqref{V_sirakov_ast}--\eqref{V_conve_seq_crit}, in view of \eqref{par_hardy}, we can consider
\begin{equation*}
V(x) = - \frac{1}{L}\sum _{j = 1} ^L \frac{\lambda_j}{|x-a_\ast|^{2s}},\quad\text{with}\quad 0<\lambda _j < \frac{\Gamma _{N,s}}{2},\quad j=1,\ldots,L,
\end{equation*}
\end{enumerate}
\end{example}
\begin{example}
Hypotheses of Theorem \ref{teoremao}--\ref{teo_crit} are satisfied by:
\begin{enumerate}[label=(\roman*)]
\item Taking $\varrho(t)$ as in Remark \ref{remarkao}--(i) and $k(x) = |x|^2/(1+|x|^2),$ one can see that $f(x,t)=k(x)\left[ \varrho'(t) (\ln |t| t) + \varrho (t)\right] |t|^{\varrho(t) -2}t,$ $f(x,0)\equiv 0,$ satisfies \eqref{bem_def}--\eqref{posi_algum}, \eqref{lim_infinito_infty} and \eqref{minimax_comper'}.
\item For a nonlinearity satisfying conditions \eqref{posi_algum}--\eqref{dife_esti}, \eqref{lim_infinito}, \eqref{increasing} and \eqref{minimax_comper} we can define $f(x,t) = h(x,t)$ for $t\geq 0,$ and $f(x,t) = -h(x,-t),$ for $t<0,$ where\\$h(x,t)=k(x)t \ln (1+t) + k_1(x) \left[(1+ \cos (t))t^2 + 2 (t + \sin (t))t \right],$ for $ t\geq 0,\ s>N/6;$ $k(x) = |x|^2/(1+|x|^2)$ and $0\leq k_1 (x) \in C(\mathbb{R}^N)$ satisfies $\lim _{|x| \rightarrow \infty} k_1(x) = 0.$

\item Let $c(x)$ be a continuous nonnegative $\mathbb{Z}^N$--periodic function and $f(x,t)=c(x)\left[p h_\varepsilon(t)+h'_\varepsilon(t)t\right]|t|^{p-1},$ $2<p<2_s ^\ast,$ with $h_\varepsilon(t)\in C^\infty(\mathbb{R})$ is a nondecreasing cutoff function satisfying $|h'_\varepsilon(t)|\leq C/t,\ |h_\varepsilon(t)|\leq C,$ for all $t \in \mathbb{R},$ $h_\varepsilon(t)=-\varepsilon,$ for $t\leq 1/4,$ $h_\varepsilon(t)=\varepsilon,$ for $t\geq 1/4,$ with $\varepsilon$ small enough. In this case $F(x,t)$ may change sign.
\item The nonlinearity $f(x,t) = \exp\{ k_0(x)( \sin (\ln |t|) +2 )  \} \left[ k_0(x) \cos (\ln |t|) + 2^{\ast} _s\right]  |t| ^{2^{\ast} _s -2}t,$ $f(x,0)\equiv0,$ satisfies the hypothesis of Theorem \ref{teo_crit}, if $k_0(x)$ is continuous and $2_ s ^\ast - \mu >\sup _{x \in \mathbb{R}^N} k_0(x)\geq k_0(x)>k_0(0) = \inf _{x \in \mathbb{R}^N} k_0(x) = \lim _{|x| \rightarrow \infty}k_0(x)=0.$
\end{enumerate}
\end{example}
\section{Preliminaries}\label{s_Preliminaries}
\subsection{Fractional Sobolev spaces} Let $0<s<N/2,$ by Plancherel Theorem, we have
\begin{equation}\label{planche_equiv}
[u]_s^2 = \int _{\mathbb{R}^N} |(-\Delta)^{s/2}u|^2 \dx,\quad\forall \, u \in C^\infty _0 (\mathbb{R}^N).
\end{equation}
Thus $\mathcal{D}^{s,2} (\mathbb{R}^N)$ is a Hilbert space when endowed with the inner product $[u,v]_s = \int _{\mathbb{R}^N} (-\Delta )^{s/2} u (-\Delta )^{s/2 } v \dx,$ and the following characterization holds $\mathcal{D}^{s,2}(\mathbb{R}^N) = \left\{ u \in L ^{2^{\ast } _s } (\mathbb{R}^N ) : \ (-\Delta )^{s/2} u \in  L^2 (\mathbb{R} ^N) \right\}.$

By \eqref{planche_equiv} we also have that $H^s (\mathbb{R}^N)$ is a Hilbert space with norm and inner product
\begin{equation*}
\| u\| ^2 :=\int _{\mathbb{R}^N} |(-\Delta)^{s/2} u| ^2 \dx +u^2\dx,\quad (u,v) := \int _{\mathbb{R}^N} (-\Delta )^{s/2} u (-\Delta )^{s/2} v +u v \dx.
\end{equation*}
Thus $H ^s (\mathbb{R} ^N) =\left\lbrace u \in L^2 (\mathbb{R}^N) : (-\Delta )^{s/2} u \in  L^2 (\mathbb{R} ^N) \right\rbrace.$ For $\Omega \subset \mathbb{R}^N$ a $C^{0,1}$ domain with bounded boundary and  $0<s<1,$ the fractional Sobolev space is defined as
\begin{equation*}
H ^s (\Omega)=\left\lbrace u \in L^2 (\Omega) :  \int _{\Omega} \int _{\Omega} \frac{\left| u(x) - u(y) \right|^2}{|x-y|^{N + 2s}}dxdy <\infty \right\rbrace,
\end{equation*}
with the norm $\| u \| _{H ^s (\Omega )} ^2 := \int _{\Omega} u^2\dx + \int _{\Omega} \int _{\Omega} \frac{\left| u(x) - u(y) \right|^2}{|x-y|^{N + 2s}}\dxdy .$ By \cite[Proposition 3.4]{hitchhiker}, we have
\begin{equation*}
[u]_s^2 = \frac{C(N,s)}{2} \int _{\mathbb{R}^N} \int _{\mathbb{R}^N} \frac{ |u(x) - u(y) |^2}{|x-y|^{N + 2s}}\dxdy,\quad\forall \, u \in \mathcal{D}^{s,2} (\mathbb{R}^N).
\end{equation*}
Moreover, we have the continuous embedding
\begin{equation}\label{Hembd}
H^s (\Omega) \hookrightarrow L^p(\Omega),\quad 2 \leq p \leq 2_s ^\ast,\quad\text{for}\quad 0 < s < N/2,
\end{equation}
and the following compact embedding (see \cite[Section 7]{hitchhiker}),
\begin{equation}\label{comp_loc}
\mathcal{D}^{s,2}(\mathbb{R}^N)\hookrightarrow L _{\loca}^p (\mathbb{R}^N),\quad 1 \leq p < 2_s ^\ast,\quad\text{for}\quad 0 < s < \min\{1,N/2\}.
\end{equation}
Thus, any bounded sequence in $H^s(\mathbb{R}^N)$ has subsequence that converges strongly in $L^p(\Omega),$ $1 \leq p < 2_s ^\ast,$ for any compact set $\Omega$ of $\mathbb{R}^N.$ The Plancherel Theorem also gives the next identity,
\begin{equation}\label{formula}
\int _{\mathbb{R}^N} (-\Delta )^{s/2} u(-\Delta )^{s/2 } v \dx = \int _{\mathbb{R}^N} (-\Delta )^{s} u  v \dx,\quad\forall \,u\in H^{2s}(\mathbb{R}^N),\ v \in H^{s}(\mathbb{R}^N).
\end{equation}
\subsection{The $s$-harmonic extension}
Next we introduce the harmonic extension following \cite[Section 2]{frac_niremberg}. Let $P _s(x,y) = \beta(N,s)\frac{y^{2s}}{\left(|x|^2 + y^2 \right)^{\frac{N+2s}{2}} },$ where $\beta(N,s)$ is such that $\int _{\mathbb{R^N}} P _s (x,1) \dx =1$ and $0<s<1.$ Considering the standard notation $\mathbb{R}^{N+1} _+=\{(x,y)\in \mathbb{R}^{N+1}:y>0 \},$  for $u \in \mathcal{D}^{s,2} (\mathbb{R}^N)$ let us set the $s-$harmonic extension of $u$,
\begin{equation*}
w(x,y)= E_s (u) (x,y) := \int _{\mathbb{R}^N} P _s (x- \xi,y) u (\xi) \dxi,\quad (x,y) \in \mathbb{R}_+ ^{N+1}.
\end{equation*}
Then, for $K\subset\overline{\mathbb{R}^{N+1} _+}$ compact we have $w \in L^2 (K, y^{1-2s})$, $\nabla w \in L ^2 (\mathbb{R}^{N+1} _+, y^{1-2s})$ and  $w \in C^{\infty} (\mathbb{R}^{N+1} _+).$ Moreover, $w$ satisfies in the distribution sense
\begin{equation}\label{sol_div}
\left\{
\begin{aligned}
\dive (y^{1-2s} \nabla w ) & = 0,  & \text{in } & \mathbb{R}^{N+1} _+,  \\
-\lim _{y \rightarrow 0 ^+} y^{1-2s} w_y (x,y) & = \kappa _s (- \Delta) ^s u (x)   & \text{in } & \mathbb{R}^{N},\\
\| \nabla w \| ^2 _{ L ^2 (\mathbb{R}^{N+1} _+, y^{1-2s}) } &=\kappa _s \|u\|^2,
\end{aligned}
\right.
\end{equation}
where $\kappa _s = 2^{1-2s}  \Gamma (1-s) / \Gamma (s),$ and $\Gamma$ is the gamma function. Precisely, for $R>0$,
\begin{equation}\label{ext_distrib_sense}
\int _{B_R ^+} y^{1-2s} \left\langle  \nabla w , \nabla \varphi \right\rangle \dxdy = \kappa _s \int_{B_R ^N} (-\Delta) ^{s/2} u (-\Delta) ^{s/2} \varphi \dx, \quad \forall \,  \varphi \in C ^\infty _0 (B ^+_R \cup B^N _R),
\end{equation}
where $B_R := \{ z=(x,y) \in \mathbb{R}^{N+1}  :|z|^2<R^2 \}, B_R ^+ := B_R \cap \mathbb{R}^{N+1} _+,$ $B ^N _R := \{ z=(x,y) \in \mathbb{R}^{N+1} _+  :|z|^2<R^2, \  y = 0 \}$ and the right-hand side of \eqref{ext_distrib_sense} is in the trace sense for $\varphi$ (for details on the trace operator see \cite{nekvinda}). More generally, given $g:\mathbb{R}^N \times \mathbb{R} \rightarrow \mathbb{R},$ $v \in H^1 (B ^+ _R , y ^{1-2s} )$ is a weak solution of the problem
\begin{equation}\label{weak_final}
\left\{
\begin{aligned}
\dive (y^{1-2s} \nabla v )  = &  0  \quad  & \text{in} \quad  B^+_R,\\
-\lim _{y \rightarrow 0 ^+} y^{1-2s} v_y (x,y)  = & \kappa _s g(x,v(x) ) \quad & \text{in} \quad B^N _R,
\end{aligned}
\right.
\end{equation}
if we have
\begin{equation}\label{defi_fraca}
\int _{B_R ^+} y^{1-2s} \left\langle  \nabla v , \nabla \varphi \right\rangle \dxdy = \kappa _s \int_{B_R ^N} g(x,v )  \varphi  \dx,\ \forall \, \varphi \in C ^\infty _0 (B ^+_R \cup B^N _R).
\end{equation}
Let $g(x,t)=f(x,t)-a(x)t$ and $u \in \mathcal{D}^{s,2}(\mathbb{R}^N)$ be such that $f(u),\ F(u)\in L^1(\mathbb{R}^N).$ Let $V(x)\in L^1_{\loca} (\mathbb{R}^N),$ satisfying \eqref{V_sirakov} and $b(x)$ verifying \eqref{V_be}. Then $w=E_s (u)$ is a weak solution of \eqref{weak_final} for all $R$ if, and only if, $u$ is a weak solution of \eqref{P}.
\begin{remark}\label{r_nonnegative}
Using the $s$-harmonic extension, it can be proved the existence of nonnegative weak solutions of \eqref{P} if $f(x,t)\geq 0$ for all $t\geq 0$ and a.e. $x$ in $\mathbb{R}^N.$ For that one can consider the truncation $\bar{f}(x,t)= f(x,t),$ if $t \geq 0,$ $\bar{f}(x,t)= 0,$ if $t<0.$ Assume that $a(x) \in L^1 _{\loca}(\mathbb{R}^N)$ and that \eqref{bem_def} and \eqref{V_sirakov} hold true with $b(x)\equiv 0.$ Thus for $u$ a weak solution of \eqref{P}, with $f(x,t)$ replaced by $\bar{f}(x,t),$ we have that $u$ is also a weak nonnegative solution for \eqref{P}. To see that, let $\xi \in C_0 ^\infty (\mathbb{R}:[0,1])$ such that $\xi(t)=1,$ if $t \in [-1,1]$ and $\xi(t)=0,$ if $|t|\geq 2,$ with $|\xi ' (t)| \leq C,$ $\forall t\in \mathbb{R}.$ For each $n\in \mathbb{N},$ define $\xi _n : \mathbb{R}^{N+1} \rightarrow \mathbb{R}$ by $\xi _n (z) = \xi (|z|^2/n^2).$ Then $\xi _n \in C_0 ^\infty (\mathbb{R}^{N+1})$ and verifies $|\nabla \xi _n (z)| \leq C$ and $|z||\nabla \xi _n (z)| \leq C,\ \forall \,z\in \mathbb{R}^{N+1}.$ By a density argument, we can take $\varphi = \xi _n w_{-}$ in \eqref{defi_fraca}, where $w_{-} (z) = \min\{w(z),0\}.$
Since $w_{-} (z)=E_s(u_{-}),$ we have
\begin{multline*}
\int_{\mathbb{R}_+^{N+1}}y^{1-2s}\xi_n |\nabla w_{-}|^2 + y^{1-2s}\xi_n \left\langle \nabla w _{+}, \nabla w_{-} \right\rangle +y^{1-2s} \left\langle \nabla w_{+}+\nabla w_{-},w_{-} \nabla \xi_n \right\rangle \dxdy \\ = \kappa _s \int_{\mathbb{R}^N} (\bar{f}(x,u)-a(x)u )\xi _n u_{-}\dx,
\end{multline*}
applying the Lebesgue Theorem and \eqref{sol_div} we get $\| u _{-}\|^{2} _{V} = \int_{\mathbb{R}^N} \bar{f}(x,u)u_{-} \dx=0,$ thus $u_{-}=0.$ If $u$ has sufficient regularity one can show that $u$ is positive, by applying the maximum principle as described in \cite{silvestre}. In order to regularize solutions of Eq. \eqref{P}, we can follow \cite[Section 6]{secchi}.
\end{remark}
\section{Proof of Theorem \ref{teo_tinta_frac_sub}}\label{s_proof_profile_decomp}
We shall prove the profile decomposition for bounded sequences in $H ^s (\mathbb{R} ^N),$ $0<s \leq N/2.$ To achieve that we start by considering
\begin{equation*}
D=D_{\mathbb{Z}^N } := \left\lbrace g_y : H ^s (\mathbb{R} ^N) \rightarrow H ^s (\mathbb{R} ^N) : g_y u(x) = u(x-y), \ y \in \mathbb{Z}^N \right\rbrace,
\end{equation*}
which turns to be a group of unitary operators in $H ^s (\mathbb{R} ^N).$ The idea is to use \cite[Theorem~3.1]{tintabook} to obtain Theorem \ref{teo_tinta_frac_sub}. We need first to determine how elements of $H^s(\mathbb{R}^N)$ becomes asymptotically orthogonal in $H^s(\mathbb{R}^N)$ with respect to any fixed function under a sequence of dislocations.
\begin{lemma}\label{eleminha}
Let $(y_k)$ be a sequence in $\mathbb{R} ^N$ and $u \in H^s(\mathbb{R}^N)\setminus\{0\}.$ The sequence $( u (\cdot - y_k ) )$ converges weakly to zero in $H^s(\mathbb{R}^N)$ if, and only if $|y_k| \rightarrow \infty.$
\end{lemma}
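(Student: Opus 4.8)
The plan is to prove the two implications separately, exploiting the fact that the translations $g_{y}\colon u\mapsto u(\cdot-y)$ are unitary on $H^{s}(\mathbb{R}^{N})$, so that $\|u(\cdot-y_{k})\|=\|u\|$ is constant; in particular the sequence is bounded and weak limits make sense, and the issue is purely about \emph{which} limit occurs.

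First I would do the easy direction: \emph{if $|y_{k}|\to\infty$ then $u(\cdot-y_{k})\rightharpoonup 0$.} Fix a test function $\varphi\in C_{0}^{\infty}(\mathbb{R}^{N})$. Since $C_{0}^{\infty}$ is dense in $H^{s}(\mathbb{R}^{N})$ and $(u(\cdot-y_{k}))$ is bounded, it suffices to show $(u(\cdot-y_{k}),\varphi)\to 0$ for such $\varphi$. Writing the inner product via the Fourier transform, $(u(\cdot-y_{k}),\varphi)=\int_{\mathbb{R}^{N}}(1+|\xi|^{2s})\,e^{-i\xi\cdot y_{k}}\widehat{u}(\xi)\overline{\widehat{\varphi}(\xi)}\dxi$, and $g(\xi):=(1+|\xi|^{2s})\widehat{u}(\xi)\overline{\widehat{\varphi}(\xi)}\in L^{1}(\mathbb{R}^{N})$ (it is a product of an $L^{2}$ function with a Schwartz function, after absorbing the polynomial weight); the Riemann--Lebesgue lemma then gives that this integral tends to $0$ as $|y_{k}|\to\infty$. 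Alternatively, and perhaps more in the spirit of the paper, one argues in physical space: test against $\varphi\in C_{0}^{\infty}$, split $\mathbb{R}^{N}$ into $\supp\varphi$ translated by $y_{k}$ and its complement, and use that the tails of $u$ (and of $(-\Delta)^{s/2}u$) in $L^{2}$ over the region $\{|x|>R\}$ go to $0$; since $\supp(u(\cdot-y_{k}))\cap\supp\varphi$ eventually lies in such a receding tail, the pairing vanishes in the limit.

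For the converse I would argue by contraposition: \emph{if $|y_{k}|\not\to\infty$ then $u(\cdot-y_{k})\not\rightharpoonup 0$.} Then some subsequence satisfies $y_{k_{j}}\to y_{0}$ for a finite $y_{0}\in\mathbb{R}^{N}$. Along this subsequence $u(\cdot-y_{k_{j}})\to u(\cdot-y_{0})$ strongly in $H^{s}(\mathbb{R}^{N})$ — this is the continuity of the translation action on $L^{2}$-type spaces, which one checks first for $u\in C_{0}^{\infty}$ by uniform continuity and then passes to general $u\in H^{s}$ by density together with the uniform bound $\|g_{y}u-g_{y}v\|=\|u-v\|$ (applied to both the $L^{2}$ part and the $(-\Delta)^{s/2}$ part, noting $(-\Delta)^{s/2}$ commutes with translations). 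Hence $u(\cdot-y_{k_{j}})\rightharpoonup u(\cdot-y_{0})$, and since $u\neq 0$ we have $u(\cdot-y_{0})\neq 0$; therefore the original sequence cannot converge weakly to $0$. (Here one should be slightly careful that $y_{0}$ need not be in $\mathbb{Z}^{N}$; that is fine, the lemma is stated for $(y_{k})\subset\mathbb{R}^{N}$.)

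The main obstacle, such as it is, is the converse direction — specifically making sure the ``physical-space'' continuity of translations is invoked correctly on the \emph{fractional} Sobolev norm rather than just on $L^{2}$. The clean way is to note that $\mathscr{F}(g_{y}u)(\xi)=e^{-i\xi\cdot y}\widehat{u}(\xi)$, so $\|g_{y}u-g_{y_{0}}u\|^{2}=\int(1+|\xi|^{2s})|e^{-i\xi\cdot y}-e^{-i\xi\cdot y_{0}}|^{2}|\widehat{u}(\xi)|^{2}\dxi$, and apply dominated convergence with dominating function $4(1+|\xi|^{2s})|\widehat{u}(\xi)|^{2}\in L^{1}$ as $y\to y_{0}$. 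Everything else is routine once density of $C_{0}^{\infty}(\mathbb{R}^{N})$ in $H^{s}(\mathbb{R}^{N})$ and the unitarity of $D$ are in hand.
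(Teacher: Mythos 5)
Your proposal is correct and follows essentially the same route as the paper: the forward implication is obtained by testing against $C_0^\infty$ functions (the paper reduces to compactly supported $u$ by density and combines disjointness of supports with a previously established weak-convergence lemma in $\mathcal{D}^{s,2}(\mathbb{R}^N)$, while you use Riemann--Lebesgue on the Fourier side — both work), and the reverse implication is proved by extracting a subsequence $y_{k_j}\to y_0$ and using strong continuity of translations in $H^s(\mathbb{R}^N)$ to exhibit the nonzero weak limit $u(\cdot-y_0)$, exactly as in the paper's contradiction argument. The differences are only in the technical implementation of these two routine steps, not in the structure of the argument.
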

\begin{proof}
Suppose that $u (\cdot - y_k ) \rightharpoonup 0$ in $H^s(\mathbb{R}^N),$ and assume by contradiction, that $y_k \rightarrow y$ up to a subsequence. By density we may assume that $u \in C^\infty_0(\mathbb{R}^N),$ also by \cite[Lemma 5.1]{paper1} we have that $u (\cdot - y_k ) \rightarrow u (\cdot - y)$ in $\mathcal{D}^{s,2}(\mathbb{R}^N),$ consequently by the Dominated Convergence Theorem
\begin{multline*}
0=\lim_{k \rightarrow \infty}(u(\cdot - y_k),u(\cdot - y)) \\= \lim_{k \rightarrow \infty}\left[  
\int _{\mathbb{R}^N} (-\Delta )^{s/2} u(\cdot - y_k) (-\Delta )^{s/2 } u(\cdot - y) + u(\cdot - y_k)u (\cdot - y)\dx 
\right]  = \|u\|^2,
\end{multline*}
which leads to a contradiction with the assumption that $u \neq 0.$ Conversely, assume that $|y_k| \rightarrow \infty.$ Again, by density argument we may assume $u \in C^\infty _0 (\mathbb{R}^N),$ and using \cite[Lemma 5.2]{paper1} we obtain that $u (\cdot - y_k ) \rightharpoonup 0$ in $\mathcal{D}^{s,2}(\mathbb{R}^N).$ Since $\supp (u (\cdot - y_k)) \cap \supp (v) = \emptyset, $ for $k $ large enough, we have
\begin{equation*}
\lim_{k \rightarrow \infty}\left[  
\int _{\mathbb{R}^N} (-\Delta )^{s/2} u(\cdot - y_k) (-\Delta )^{s/2 } v + u(\cdot - y_k)v \dx 
\right]  =0,\quad \forall \,v\in C^\infty _0 (\mathbb{R}^N),\qedhere
\end{equation*}
\end{proof}
We complement \cite{tintacocompact} establishing an equivalence between the $L^p$-convergence and $D_{\mathbb{Z}^N }$-convergence (see \cite[Definition 3.1]{tintabook} or \cite[Definition 1.1]{tintacocompact}) in $H^s (\mathbb{R}^N).$ Thus, Theorem \ref{teo_tinta_frac_sub} follows by an argument in \cite[Corollary 3.3]{tintabook}.
\begin{proposition}\label{cocompact}
	Let $(u_k)$ be a bounded sequence in $H^s (\mathbb{R}^N).$ Then $u_k \stackrel{D_{\mathbb{Z}^N }}{\rightharpoonup} 0$ in $H^s (\mathbb{R}^N),$ if and only if $u_k \rightarrow 0$ in $L^{p}(\mathbb{R}^N),$ for all $2<p<2^\ast _s.$
\end{proposition}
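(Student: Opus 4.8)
The plan is to prove the two implications separately, exploiting the structure $H^s(\mathbb{R}^N) = L^2(\mathbb{R}^N) \cap \mathcal{D}^{s,2}(\mathbb{R}^N)$ (with equivalence of norms via Plancherel) and the already-available cocompactness results for the homogeneous space from \cite{tintacocompact,paper1}. Recall that $u_k \stackrel{D_{\mathbb{Z}^N}}{\rightharpoonup} 0$ means precisely that $u_k(\cdot + y_k) \rightharpoonup 0$ in $H^s(\mathbb{R}^N)$ for every sequence $(y_k) \subset \mathbb{Z}^N$; by Lemma~\ref{eleminha} this is the natural notion of ``no mass escaping along integer translations.''

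For the direction ($L^p$-convergence $\Rightarrow$ $D_{\mathbb{Z}^N}$-convergence), I would argue by contradiction: if $u_k(\cdot + y_k) \rightharpoonup w \neq 0$ for some $(y_k) \subset \mathbb{Z}^N$, then by compactness of the local embedding \eqref{comp_loc} (applied to the translated, still bounded sequence) one gets $u_k(\cdot + y_k) \to w$ strongly in $L^p_{\loca}(\mathbb{R}^N)$ for $2 \le p < 2^\ast_s$; testing against a fixed bump supported near $0$ shows $\|u_k\|_p = \|u_k(\cdot+y_k)\|_p$ does not go to zero, contradicting $u_k \to 0$ in $L^p(\mathbb{R}^N)$. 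This direction is soft.

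The substantive direction is ($D_{\mathbb{Z}^N}$-convergence $\Rightarrow$ $L^p \to 0$). Here I would invoke the cocompactness of the embedding $H^s(\mathbb{R}^N) \hookrightarrow L^p(\mathbb{R}^N)$ for $2 < p < 2^\ast_s$ with respect to the noncompact group of integer translations — this is exactly the content promised from \cite{tintacocompact}. Concretely: a bounded sequence that does not converge to $0$ in $L^p$ must, after passing to a subsequence, satisfy $\int_{Q + z_k} |u_k|^p \dx \ge \varepsilon > 0$ for some unit cube $Q$ and some $z_k \in \mathbb{Z}^N$ (a ``vanishing lemma'' of Lions type, here at the level of the integer lattice: if the supremum over lattice translates of the $L^p$-mass on a fixed cube goes to $0$, then $u_k \to 0$ in $L^p$). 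Then $u_k(\cdot + z_k)$ is bounded in $H^s$, so up to a subsequence it converges weakly to some $w$, and by the local compact embedding the $L^p$-mass on $Q$ passes to the limit, forcing $w \neq 0$; this contradicts $u_k \stackrel{D_{\mathbb{Z}^N}}{\rightharpoonup} 0$. The main obstacle is the Lions-type vanishing lemma on the lattice — i.e. controlling the global $L^p$ norm by the supremum of local $L^p$ masses over unit cubes using an interpolation inequality of the form $\|u\|_{L^p(Q)}^p \le C \|u\|_{L^p(Q)}^{\theta p}\|u\|_{H^s(Q)}^{(1-\theta)p}$ summed over a cover of $\mathbb{R}^N$ by unit cubes; this requires the fractional Sobolev–Gagliardo–Nirenberg inequality on cubes and careful handling of the cases $N > 2s$ versus the borderline $N = 2s$ (where $2^\ast_s = \infty$). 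Granting this, $D_{\mathbb{Z}^N}$-cocompactness follows, and then Theorem~\ref{teo_tinta_frac_sub} is obtained by feeding $D = D_{\mathbb{Z}^N}$ and Proposition~\ref{cocompact} into the abstract machinery of \cite[Corollary 3.3]{tintabook}, exactly as indicated.
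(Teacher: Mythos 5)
Your proposal is correct in outline but routes both implications differently from the paper. For the easy implication ($L^p$-convergence to $0$ implies $D_{\mathbb{Z}^N}$-weak convergence) the paper does not pass through local compactness at all: it tests $g^{\ast}_{y_k}u_k$ against $\varphi\in C^\infty_0(\mathbb{R}^N)$, rewrites the $\mathcal{D}^{s,2}$-pairing as $\int_{\mathbb{R}^N} u_k\,(-\Delta)^s\varphi(\cdot-y_k)\dx$ via \eqref{formula}, and kills both it and the $L^2$-pairing by H\"older against $\|u_k\|_p\to 0$; your contradiction argument via the local compact embedding reaches the same conclusion and is equally soft. For the substantive implication the paper simply cites \cite[Theorem 2.4]{tintacocompact}, whereas you propose to reprove cocompactness from scratch via a Lions-type vanishing lemma on the unit lattice. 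That route is the classical one and does work: with the exponent on the $\|u\|_{H^s(Q)}$ factor chosen equal to $2$, your ``interpolation inequality'' reduces to the Sobolev embedding on a cube, and the summation closes because $\sum_{z\in\mathbb{Z}^N}\|u\|^2_{H^s(Q+z)}\lesssim\|u\|^2_{H^s(\mathbb{R}^N)}$. What your approach buys is self-containedness; what it costs is that you must actually prove the lattice vanishing lemma, and the one point not to gloss over is that the cube-wise superadditivity of the $H^s$-norm is immediate from the Gagliardo seminorm only for $0<s<1$, while the proposition is stated for all $0<s\le N/2$ (so possibly $s\ge 1$); there, and in the borderline case $N=2s$, the localization needs a separate argument --- which is precisely the technical content hidden in the Cwikel--Tintarev reference the paper leans on.
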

\begin{proof}
Suppose that $u_k \rightarrow 0$ in $L^{p}(\mathbb{R}^N),$ $2<p<2^\ast _s.$ Take an arbitrary sequence $(g_{y_k}) $ in $D_{\mathbb{Z}^N}$ and let $\varphi \in C ^\infty _0 (\mathbb{R}^N).$ Using \eqref{formula} we have
\begin{equation*}
\left|\int_{\mathbb{R}^N} (-\Delta )^{s/2} (g^\ast_{y_k}u_k) (-\Delta )^{s/2} \varphi \dx \right| \leq \left(\int_{\mathbb{R}^N} |u_k| ^p \dx\right)^{\frac{1}{p}} \left(\int_{\mathbb{R}^N}|(-\Delta ) ^s \varphi(\cdot-y_k)|^{\frac{p}{p-1}} \dx \right) ^{\frac{p-1}{p}}.
\end{equation*}
Thus, using H\"{o}lder inequality in the $L^2$ term of the inner product of $H ^s (\mathbb{R}^N)$, we conclude that $g^\ast_{y_k} u_k \rightharpoonup 0$ in $H ^s (\mathbb{R}^N).$ For the rest of the proof we refer the reader to \cite[Theorem 2.4]{tintacocompact}.
\end{proof}
\begin{proof}[Proof of Theorem \ref{teo_tinta_frac_sub} completed]We prove by applying \cite[Theorem~3.1]{tintabook}. In fact, let $(g_{y_k})$ in $D_{\mathbb{Z}^N }$ such that $g_{y_k} \not \rightharpoonup 0$ in $H^s(\mathbb{R}^N).$ By Lemma \ref{eleminha}, up to a subsequence $y_k \rightarrow y,$ and by \cite[Lemma 5.2]{paper1} $g_{y_k}\rightarrow g_y.$ Thus, in view of \cite[Proposition~3.1]{tintabook}, $(H^s(\mathbb{R}^N),D_{\mathbb{Z} ^N})$ is a dislocation space. Assertions \eqref{seis.dois.sub} and \eqref{seis.quatro.sub} follows by Lemma \ref{eleminha} and Proposition \ref{cocompact} respectively.
\end{proof}
\section{Variational settings}\label{s_variational}
In this section we have the basic background to apply variational argument to study \eqref{P}.
\begin{proposition}\label{welldef} Let $V(x)\in L ^1 _{\loca} (\mathbb{R}^N)$ satisfying \eqref{V_sirakov}, then $H^s _{V}(\mathbb{R} ^N)$ is a Hilbert space continuously embedded in $H^s(\mathbb{R}^N).$ If $V(x)$ satisfies \eqref{V_sirakov_ast}, then $\| \cdot \|_{V}$ is equivalent to the  norm of $\mathcal{D}^{s,2}(\mathbb{R}^N).$
\end{proposition}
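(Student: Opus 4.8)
The plan is to establish Proposition~\ref{welldef} in two independent parts corresponding to the two hypotheses on $V$, in both cases exploiting the coercivity information encoded in \eqref{V_sirakov} (resp.\ \eqref{V_sirakov_ast}) together with a suitable interpolation/fractional Sobolev inequality.

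\emph{Part 1: $V$ satisfies \eqref{V_sirakov}.} First I would verify that $(\cdot,\cdot)_V$ is a genuine inner product on $C_0^\infty(\mathbb{R}^N)$: bilinearity and symmetry are immediate, and positivity follows because $\|u\|_V^2 = 0$ with \eqref{V_sirakov} forces $\|u\|_2 = 0$ (since $\mathcal{C}_V > 0$ controls $\|u\|_2^2$ from above by $\|u\|_V^2$ when $\|u\|_V$ is small; more precisely $\mathcal{C}_V \|u\|_2^2 \le \|u\|_V^2$ by homogeneity of the Rayleigh quotient), hence $u = 0$ together with $[u]_s = 0$. The key estimate is the equivalence, on $C_0^\infty(\mathbb{R}^N)$, of $\|u\|_V^2$ with $[u]_s^2 + \|u\|_2^2 = \|u\|^2$. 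The bound $\|u\|_V^2 \le \|u\|^2 + (1+\mathcal{B})\|u\|_2^2$ is trivial from $V \ge -\mathcal{B}$. For the reverse inequality I would argue: from $V \ge -\mathcal{B}$ and $\mathcal{C}_V\|u\|_2^2 \le [u]_s^2 + \int V u^2$, write, for a parameter $\theta \in (0,1)$,
\begin{equation*}
\|u\|_V^2 = (1-\theta)\bigl([u]_s^2 + \textstyle\int V u^2\bigr) + \theta[u]_s^2 + \theta\textstyle\int V u^2 \ge (1-\theta)\mathcal{C}_V\|u\|_2^2 + \theta[u]_s^2 - \theta\mathcal{B}\|u\|_2^2,
\end{equation*}
and then choose $\theta$ small enough that $(1-\theta)\mathcal{C}_V - \theta\mathcal{B} > 0$; this yields $\|u\|_V^2 \ge c_0\bigl([u]_s^2 + \|u\|_2^2\bigr)$ for some $c_0 > 0$. (One subtlety: $\int V u^2$ must be finite and the splitting legitimate; since $u \in C_0^\infty$ and $V \in L^1_{\loc}$, $\int_{\supp u} |V| u^2 < \infty$, so this is fine.) Consequently the $\|\cdot\|_V$-completion $H^s_V(\mathbb{R}^N)$ coincides, as a set with equivalent norm, with the closure of $C_0^\infty$ in $H^s(\mathbb{R}^N)$, which is $H^s(\mathbb{R}^N)$ itself (density); completeness of $H^s_V$ is then inherited, and the embedding $H^s_V(\mathbb{R}^N) \hookrightarrow H^s(\mathbb{R}^N)$ is continuous with the constant $c_0^{-1/2}$.

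\emph{Part 2: $V$ satisfies \eqref{V_sirakov_ast}.} Here $V \le 0$, $V \in L^1_{\loc}$, and $\mathcal{C}_V^\ast = \inf_{u \ne 0} \bigl([u]_s^2 + \int V u^2\bigr)/\int |V|u^2 > 0$. I would show $\|\cdot\|_V$ is equivalent to $[\cdot]_s$ on $C_0^\infty(\mathbb{R}^N)$. Since $V \le 0$ we have immediately $\|u\|_V^2 \le [u]_s^2$. For the lower bound, the definition of $\mathcal{C}_V^\ast$ gives $\int |V|u^2 \le (\mathcal{C}_V^\ast)^{-1}\bigl([u]_s^2 + \int V u^2\bigr) = (\mathcal{C}_V^\ast)^{-1}\bigl([u]_s^2 - \int |V|u^2\bigr)$, i.e.\ $\bigl(1 + (\mathcal{C}_V^\ast)^{-1}\bigr)\int |V|u^2 \le (\mathcal{C}_V^\ast)^{-1}[u]_s^2$, so $\int |V|u^2 \le \frac{1}{1+\mathcal{C}_V^\ast}[u]_s^2$; hence $\|u\|_V^2 = [u]_s^2 - \int|V|u^2 \ge \frac{\mathcal{C}_V^\ast}{1+\mathcal{C}_V^\ast}[u]_s^2$. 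Thus $\|\cdot\|_V$ and $[\cdot]_s$ are equivalent norms on $C_0^\infty$, and so the $\|\cdot\|_V$-completion is $\mathcal{D}^{s,2}(\mathbb{R}^N)$ with an equivalent norm.

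\emph{Main obstacle.} The routine algebra above is easy; the point that needs care is making the manipulations rigorous at the level of the completion rather than only on $C_0^\infty$, and in particular checking that the quantity $\int V u^2$ (resp.\ $\int |V| u^2$) extends continuously from $C_0^\infty$ to the abstract completion so that $H^s_V$ can be \emph{identified} with a concrete subspace of $H^s(\mathbb{R}^N)$ (resp.\ $\mathcal{D}^{s,2}$) — a priori the completion is just an abstract Hilbert space. The norm equivalence on $C_0^\infty$ is exactly what resolves this: it shows the identity map $(C_0^\infty, \|\cdot\|_V) \to (C_0^\infty, \|\cdot\|)$ is a bi-Lipschitz isomorphism, hence extends to an isomorphism of completions, letting us transport all structure. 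I would also record explicitly the consequence noted in Remark~\ref{remarkao}(ix): since $H^s_V \hookrightarrow H^s$ continuously and $H^s \hookrightarrow L^{2\beta'}$ by \eqref{Hembd} (valid because $\beta > N/2s$ forces $2\beta' = 2\beta/(\beta-1) < 2_s^\ast$), the infimum $\mathcal{C}_V^{(\beta)}$ in \eqref{V_be} is attained along a minimizing sequence bounded in $H^s$ and is strictly positive, which is needed for $I$ to be well defined.
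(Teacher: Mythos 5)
Your coercivity estimate in Part 1 is correct and is in fact a cleaner route than the paper's: the paper proves $C[\varphi]_s^2\leq\|\varphi\|_V^2$ by contradiction (normalizing a putative bad sequence by $[\varphi_n]_s$ and playing $\mathcal{C}_V\|v_n\|_2^2\leq\|v_n\|_V^2$ against $1-\mathcal{B}\|v_n\|_2^2\leq\|v_n\|_V^2$), whereas your convex-combination argument $\|u\|_V^2\geq(1-\theta)\mathcal{C}_V\|u\|_2^2+\theta[u]_s^2-\theta\mathcal{B}\|u\|_2^2$ gives the full bound $\|u\|_V^2\geq c_0\|u\|^2$ directly and with an explicit constant. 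Part 2 is essentially identical to the paper's computation and is fine.

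There is, however, one genuine error in Part 1: the claim that $\|u\|_V^2\leq\|u\|^2+(1+\mathcal{B})\|u\|_2^2$ is ``trivial from $V\geq-\mathcal{B}$.'' A lower bound on $V$ bounds $\int Vu^2$ from \emph{below}, not above; under \eqref{V_sirakov} the potential may be unbounded above (e.g.\ $V(x)=|x|^2$ satisfies \eqref{V_sirakov}), so no reverse inequality $\|u\|_V\lesssim\|u\|$ holds, and your conclusion that $H^s_V(\mathbb{R}^N)$ coincides with $H^s(\mathbb{R}^N)$ is false in general. The proposition only asserts the one-sided continuous embedding, which your lower bound does deliver, so the error is an overclaim rather than a fatal gap; but it also undercuts your ``bi-Lipschitz isomorphism of completions'' resolution of the identification issue you correctly flag as the main obstacle. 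With only the one-sided bound, the extension of the identity map is merely a continuous map from the abstract completion into $H^s(\mathbb{R}^N)$, and one must still argue it is injective; the paper handles the identification by showing, via Fatou's lemma along a.e.\ convergent subsequences, that $H^s_V(\mathbb{R}^N)\subset\{u\in H^s(\mathbb{R}^N):\int_{\mathbb{R}^N}V(x)u^2\dx<\infty\}$, a strictly smaller space in general (indeed the paper's Remark \ref{assymp_defined} only obtains $H^s_V=H^s$ under the \emph{additional} hypothesis \eqref{V_as}). You should delete the upper bound and replace the ``coincides with $H^s$'' conclusion by this one-sided identification.
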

\begin{proof}
Let us prove first that there exists a positive constant $C$ such that
\begin{equation}\label{prova_imersao}
C [ \varphi ]_s ^2 \leq \|\varphi \|^2 _{V},\quad\forall \, \varphi \in C^\infty _0 (\mathbb{R}^N).
\end{equation}
In fact, on the contrary, there would exist a sequence $(\varphi_n)$ in $C_0^\infty (\mathbb{R}^N ),$ such that 
\begin{equation*}
[\varphi _n]_s ^2 > n \| \varphi _n \|_{V}^2,\quad\forall \, n\in \mathbb{N}.
\end{equation*}
Taking $v_n = \varphi _n/ [\varphi _n]_s,$ we have $\frac{1}{n}> \| v_n \|^2 _{V}$ and $\mathcal{C}_{V} \|v_n\|_2 ^2 \leq \| v_n \| _{V} ^2,$ $\forall \, n \in \mathbb{N},$ and consequently $\lim _{n \rightarrow \infty } \| v_n \| _{V} ^2 = \lim _{n \rightarrow \infty } \|v_n \|_2 ^2 = 0.$ This leads to a contradiction with the fact that $1-\mathcal{B} \|v_n \| _2 ^2 \leq \| v_n \| _{V} ^2,$ $\forall \, n \in \mathbb{N}.$ Now consider $(\varphi _n)$ any sequence in $C_0^\infty (\mathbb{R}^N ).$ Using inequality \eqref{prova_imersao} we have $C [ \varphi _m -\varphi_n]_s ^2 \leq \|\varphi _m- \varphi_n\|^2 _{V},\quad \forall \, m \neq n.$

Consequently,  $ \| \varphi _m- \varphi_n \| ^2 \leq \min \{ 1,C\} ^{-1} \left(1 + \mathcal{C}_{V}^{-1}\right) \| \varphi _m -\varphi_n \| _{V} ^2,$ $\forall \, m \neq n.$ Thus $H^s _{V}(\mathbb{R} ^N)$ is well defined. Moreover, Fatou Lemma and embedding \eqref{Hembd} imply $H^s _{V}(\mathbb{R} ^N) \subset \left\lbrace u \in H^s (\mathbb{R} ^N) : \int _{\mathbb{R}^N} V(x) u^2 \dx < \infty \right\rbrace,$ with the continuous embedding $H^s _{V}(\mathbb{R} ^N) \hookrightarrow H^s (\mathbb{R} ^N).$ Assuming \eqref{V_sirakov_ast},$
[u]_s ^2 + \int _{\mathbb{R}^N}V(x) u^2\dx \geq \mathcal{C}^\ast_{V} \int _{\mathbb{R}^N} |V(x)|u^2 \dx,$ $\forall \, u\in C^\infty _0 (\mathbb{R}^N),$ from this we derive
\begin{equation*}
\mathcal{C}^\ast_{V} [u]^2_s \leq (\mathcal{C}^\ast_{V}+1)[u]^2_s + \int_{\mathbb{R}^N}(V(x)-\mathcal{C}^\ast_{V} |V(x)|) u^2 \dx \leq (\mathcal{C}^\ast_{V} +1)\|u\|^2_{V},\quad\forall \, u\in C^\infty _0 (\mathbb{R}^N).
\end{equation*}
Since $V(x)\leq 0$ a.e. in $\mathbb{R}^N,$ the norms $[ \ \cdot \ ]_s$ and $\|\cdot\|_{V}$ are equivalent in $\mathcal{D}^{s,2}(\mathbb{R}^N).$
\end{proof}
\begin{remark}\label{assymp_defined}
\begin{enumerate}[label=(\roman*)]
\item If $V(x)$ fulfills \eqref{V_sirakov} and \eqref{V_as}, then $H_{V}^s (\mathbb{R} ^N) = H^s (\mathbb{R} ^N).$ Moreover, the norms $\|\cdot\|$ and $\|\cdot\| _{V}$ are equivalent. Consequently, the path $\lambda _u (t):= u(\cdot /t),$ $t \geq 0$ belongs to $C([0, \infty ),  H^s _{V}(\mathbb{R} ^N) )$ and $u (\cdot - y) \in H^s _{V} (\mathbb{R}^N)$ for all $u \in H^s _{V} (\mathbb{R}^N)$ and $y\in\mathbb{R}^N.$ Indeed, there is a ball $B_{R_1}$ with center at the origin such that
\begin{align*}
\qquad \quad \int _{\mathbb{R}^N} V(x) u^2 \dx &=\int _{B_{R_1}} V(x) u^2 \dx+ \int _{\mathbb{R}^N \setminus B_{R_1}} V(x) u^2 \dx\\
&\leq\left(\int _{B_{R_1}} |V(x)| ^{\sigma }\dx \right)^{1/\sigma}\left(\int _{B_{R_1}} |u| ^{2 \sigma/(\sigma - 1)} \dx\right)^{(\sigma -1)/ \sigma} \\&\qquad+(V_\infty + 1)\int _{\mathbb{R}^N \setminus B_{R_1}}u^2 \dx,\ \forall \, u \in H^s _{V} (\mathbb{R}^N ),&
\end{align*}
where $2\leq 2 \sigma/(\sigma - 1) \leq 2_s ^\ast.$ So we can apply \eqref{Hembd} to conclude. To obtain that $\lambda_u$ belongs to $H_{V}^s (\mathbb{R} ^N)$ we use \cite[Lemma 8.3]{paper1}.
\item If \eqref{V_pe}--\eqref{V_sirakov} hold, then Theorem \ref{teo_tinta_frac_sub} holds replacing $H^s(\mathbb{R}^N)$ by $H^s _{V} (\mathbb{R}^N)$ and $\| \cdot \|$ by $\| \cdot \| _V.$ In fact, \eqref{V_pe} implies that $D _{\mathbb{Z} ^N}$ is a group of unitary operators in $H^s_{V} (\mathbb{R}^N).$
\end{enumerate}
\end{remark}
\begin{lemma}\label{geometry}
Suppose that $f(x,t)$ satisfies \eqref{bem_def} and either \eqref{A-R}--\eqref{posi_algum} or \eqref{quadratic}. If $a(x)=V(x) - b(x)\in L^1_{\loca} (\mathbb{R}^N)$ fulfill \eqref{V_sirakov}--\eqref{V_be}, then $I$ possess the mountain pass geometry. Precisely, 
\begin{enumerate}[label=(\roman*)]
\item $I(0)=0;$
\item There exist $r,\ b>0$ such that $I(u) \geq b,$ whenever $\|u\| _{V} = r; $
\item There is $e\in H ^s _{V} (\mathbb{R}^N)$ with $\|e\|_{V} > r$ and $I(e)<0;$ 
\end{enumerate}
In particular $0<c(I)<\infty.$ 
\end{lemma}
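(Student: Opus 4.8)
The plan is to verify the three mountain-pass conditions directly from the definition of $I$ and the structural hypotheses on $f$ and $a$, and then to deduce $0 < c(I) < \infty$. The point $I(0)=0$ is immediate from the formula for $I$. For condition (ii), the key is to control the two perturbative terms $\int b(x)u^2\dx$ and $\int F(x,u)\dx$ by a small multiple of $\|u\|_V^2$ for $\|u\|_V$ small. First I would use \eqref{V_be}: by H\"older's inequality with exponents $\beta$ and $\beta'$ one has $\int_{\mathbb{R}^N} b(x)u^2\dx \leq \|b\|_\beta \|u\|_{2\beta'}^2$, and by the very definition of $\mathcal{C}_V^{(\beta)}$ in \eqref{V_be} together with Proposition \ref{welldef} (which guarantees $H^s_V(\mathbb{R}^N)\hookrightarrow H^s(\mathbb{R}^N)$ and hence $\|u\|_{2\beta'}^2 \leq (\mathcal{C}_V^{(\beta)})^{-1}\|u\|_V^2$), the term $\tfrac12\int b(x)u^2\dx$ is bounded by $\tfrac12\,\|b\|_\beta\,(\mathcal{C}_V^{(\beta)})^{-1}\|u\|_V^2$ with $\|b\|_\beta(\mathcal{C}_V^{(\beta)})^{-1} < 1$, leaving a strictly positive remainder $\tfrac12(1-\|b\|_\beta(\mathcal{C}_V^{(\beta)})^{-1})\|u\|_V^2$. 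Next, using \eqref{bem_def} (or, under the alternative hypothesis \eqref{quadratic}, Remark \ref{remarkao}(ii), which reduces \eqref{quadratic}+\eqref{dife_esti} to a bound of the same form as \eqref{bem_def}), for any $\varepsilon>0$ we get $|F(x,t)| \leq \tfrac{\varepsilon}{2}(|t|^2+|t|^{2^\ast_s}) + \tfrac{C_\varepsilon}{p_\varepsilon}|t|^{p_\varepsilon}$, so that $\int F(x,u)\dx \leq C\varepsilon\|u\|_V^2 + C\varepsilon\|u\|_V^{2^\ast_s} + CC_\varepsilon\|u\|_V^{p_\varepsilon}$ by the continuous embeddings $H^s_V\hookrightarrow H^s\hookrightarrow L^p$ for $2\leq p\leq 2^\ast_s$ (embedding \eqref{Hembd}). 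Choosing $\varepsilon$ small and then $\|u\|_V = r$ small enough that the super-quadratic terms are absorbed, one obtains $I(u) \geq b > 0$ on $\|u\|_V = r$.

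For condition (iii) I would exhibit an explicit unbounded path. Under \eqref{A-R}, integrating the Ambrosetti--Rabinowitz inequality gives $F(x,t) \geq c_1|t|^\mu - c_2|t|^2$ for constants $c_1>0$, $c_2\geq 0$, but to produce a single $e$ one must first know $F$ is somewhere positive; this is exactly where \eqref{posi_algum} enters — it guarantees a set of positive measure and a level $t_0$ on which $F(x,t_0) > 0$, so that for a suitable bump function $u_0$ supported near $x_0$ and of height $\approx t_0$ one has $\int F(x,\tau u_0)\dx \to +\infty$ as $\tau\to\infty$ (using $\mu>2$), hence $I(\tau u_0)\to -\infty$ and we take $e = \tau u_0$ for large $\tau$. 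Under the alternative hypothesis \eqref{quadratic}, the condition $\lim_{|t|\to\infty} F(x,t)/t^2 = \infty$ uniformly in $x$ gives, for any $M>0$, the bound $F(x,t) \geq Mt^2 - C_M$ for all $(x,t)$; then for fixed $u_0\in H^s_V\setminus\{0\}$ with, say, $\|u_0\|_2=1$, one has $I(\tau u_0) \leq \tfrac{\tau^2}{2}\|u_0\|_V^2 - M\tau^2 + C_M|\operatorname{supp}u_0|$; choosing $M > \tfrac12\|u_0\|_V^2$ forces $I(\tau u_0)\to -\infty$, and again $e=\tau u_0$ for large $\tau$. In either case $\|e\|_V > r$ by enlarging $\tau$.

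Finally, the conclusion $0 < c(I) < \infty$: the path $t\mapsto \gamma(t) := e t/T$ (reparametrised on $[0,\infty)$ so that $\gamma(0)=0$ and $I(\gamma(t))\to-\infty$, e.g. $\gamma(t) = t e$ for $t$ in a bounded range extended suitably) lies in $\Gamma_I$ once we check $\lim_{t\to\infty}I(\gamma(t)) = -\infty$, which follows from the same super-quadratic/superlinear estimates used for (iii); hence $c(I) < \infty$. The lower bound $c(I) \geq b > 0$ is the standard mountain-pass observation: every $\gamma\in\Gamma_I$ starts at $0$ (inside the ball $\|u\|_V < r$) and must eventually leave it since $I(\gamma(t))\to-\infty$ while $I \geq b$ on the sphere $\|u\|_V=r$; thus $\sup_{t\geq 0}I(\gamma(t)) \geq b$ for every such path, and taking the infimum gives $c(I)\geq b > 0$. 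The main obstacle I anticipate is the bookkeeping in part (ii) — making sure the gain $\tfrac12(1 - \|b\|_\beta(\mathcal{C}_V^{(\beta)})^{-1})\|u\|_V^2$ from the quadratic part genuinely dominates all of the contributions from $F$ near $u=0$, which forces one to choose $\varepsilon$ (in \eqref{bem_def}) depending on this gain before fixing $r$; once the order of quantifiers is right, everything else is routine.
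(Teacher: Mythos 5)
Your proposal is correct in substance, and for items (i), (ii) and the \eqref{A-R} branch of (iii) it follows essentially the same route as the paper: the same H\"older/$\mathcal{C}_V^{(\beta)}$ absorption of the $b(x)$ term, the same $\varepsilon$-splitting of $F$ via \eqref{bem_def}, and the same truncated bump $v=\xi_R(|x-x_0|)$ of height exactly $t_0$ whose energy $\int F(x,v)\dx$ is positive by \eqref{posi_algum}, pushed to $-\infty$ along rays using the monotonicity of $t\mapsto F(x,t)t^{-\mu}$. Two small imprecisions there: \eqref{posi_algum} does \emph{not} give "a set of positive measure and a level $t_0$ on which $F(x,t_0)>0$" — it gives positivity of a weighted sum of two infima, which is exactly what makes $\int F(x,v)\dx>0$ for the bump that equals $t_0$ on $B_R(x_0)$ and takes values in $[0,t_0]$ on the annulus (so the height must be exactly $t_0$ on the plateau, not "$\approx t_0$"); and the uniform bound $F(x,t)\geq c_1|t|^\mu-c_2|t|^2$ with $c_1>0$ is not a consequence of \eqref{A-R} alone — the correct mechanism, which you ultimately rely on, is the pointwise inequality $F(x,\tau v)\geq\tau^\mu F(x,v)$ for $\tau>1$ together with $\int F(x,v)\dx>0$.

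Where you genuinely diverge from the paper is the \eqref{quadratic} branch of (iii). The paper splits $\int F(x,tv)\dx$ over the sets $A(R,t)=\{t|v|>t_R\}$ and $K_t=(\mathbb{R}^N\setminus A(R,t))\cap\supp v$, and uses dominated convergence twice (to kill $\int_{K_t}F(x,tv)\dx$ and to show $\int_{A(R,t)}v^2\dx\to\|v\|_2^2$). You instead upgrade the uniform limit $F(x,t)/t^2\to\infty$ together with the local boundedness of $f$ in \eqref{quadratic} to a global bound $F(x,t)\geq Mt^2-C_M$, which immediately gives $I(\tau u_0)\leq\frac{\tau^2}{2}\|u_0\|_V^2-M\tau^2\|u_0\|_2^2+C_M|\supp u_0|\to-\infty$ once $M>\frac12\|u_0\|_V^2/\|u_0\|_2^2$. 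This is a valid and more elementary argument; its only cost is that it requires $|\supp u_0|<\infty$, which is harmless here (the bump is compactly supported) but explains why the paper's dominated-convergence version is the one reused in Remark \ref{r_minimax}--(iv) for arbitrary $u\in H^s_V(\mathbb{R}^N)\setminus\{0\}$, where the support need not have finite measure. Your closing argument for $0<c(I)<\infty$ (paths must cross the sphere $\|u\|_V=r$) is the standard one and is fine.
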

\begin{proof}
Let $\xi _R \in C^\infty _0 (\mathbb{R}),\ R>0,$ such that $0\leq \xi _R(t)\leq t_0,$ $\xi _R(t)= t_0$ if $|t| \leq R,$ and $\xi _R(t)= 0$ if $|t| > R+1.$ Setting $v(x) := \xi_R(|x-x_0|),$ we have $v \in H^s _{V} (\mathbb{R}^N )$ and by assumption \eqref{posi_algum} we have
\begin{align*}
\int_{\mathbb{R}^N} F(x,v ) \dx &= \int _{B_R (x_0)} F(x,t_0) \dx + \int_{B_{R+1}(x_0)\setminus B_R(x_0)} F(x,v ) \dx\\
&\geq |B_R|\inf _{B_R(x_0)} F(x,t_0) + |B_{R+1}\setminus B_R|\inf_{(x,t) \in  C_R(x_0,t_0)} F(x,t) >0.
\end{align*}
First assume that \eqref{A-R} holds. Since $b(x) \in L^{\beta}(\mathbb{R}^N),$
\begin{equation*}
\int_{\mathbb{R}^N} b(x) u^2 \dx \leq \left(\int _{\mathbb{R}^N} |b(x)|^{\beta}\dx \right)^{1/ \beta} \left( \int _{\mathbb{R}^N}|u|^{2 \beta / (\beta -1)} \dx \right)^{(\beta -1) / \beta},\quad\forall \,  u \in H^s_{V}(\mathbb{R}^N),
\end{equation*}
with $2<2 \beta / (\beta -1) < 2_s ^\ast,$ by \eqref{bem_def} and \eqref{V_be}, for any $\varepsilon$ we get
\begin{equation}\label{eq_geo_posi}
I(u) \geq \left[\frac{1}{2} \left(1 -\frac{\|b(x)\|_{\beta}}{\mathcal{C}^{(\beta)}_V}-2\varepsilon \mathcal{C}_2\right)-\varepsilon\mathcal{C}_{2_s^\ast}\|u\|_{V}^{2_s ^\ast-2} - C_\varepsilon \mathcal{C}_{p_\varepsilon }\|u\|_{V}^{p _\varepsilon -2} \right] \|u\|^2_{V},\ \forall \, u \in H^s_{V}(\mathbb{R}^N),
\end{equation}
where $\mathcal{C}_2,$ $\mathcal{C}_{2_s^\ast}$ and $\mathcal{C}_{p_\varepsilon }$ are positive constants given in Proposition \ref{welldef}. This allows to consider $\varepsilon$ such that the first term in the right-hand side of \eqref{eq_geo_posi} is positive, once $\|u\|_{V}$ is taken small enough. Hence there exists $r>0$ such that $I(u) >0$ provided that $\|u\|_{V}=r.$ Since \eqref{A-R} is equivalent to $d/dt (F(x,t)t ^{-\mu}) \geq 0,$ for $t> 0,$ we have 
\begin{equation*}
\int_{\mathbb{R}^N}F(x,tv)\dx \geq t^\mu \int_{\mathbb{R}^N} F(x,v)\dx,\quad\text{whenever } t>1.
\end{equation*}
Hence, as $t \rightarrow \infty,$
\begin{equation*}
I(tv)=\frac{t^2}{2}\|v\|^2 _{V} -\int_{\mathbb{R}^N}b(x) u^2\dx - \int_{\mathbb{R}^N} F(x,tv)\dx \leq \frac{t^2}{2}\|v\|_V^2 - t^\mu \int_{\mathbb{R}^N} F(x,v)\dx \rightarrow - \infty,
\end{equation*}
Now suppose that \eqref{quadratic} holds. By Remark \ref{remarkao}--(ii) we can argue as above to conclude the existence of $r>0$ such that $I(u) > 0$ wherever $\|u\|_{V} <r.$ Given $R>0,$ there exists $t_{R}>0$ such that $F(x,t) > R t^2,$ $\forall \, |t|>t_R.$ Let $A(R,t):=\{x \in \mathbb{R}^N: t |v(x)|> t_{R} \},$ for $t>0.$ We have that
\begin{align}
\int_{\mathbb{R}^N}F(x,tv) \dx = \int_{ K_t}F(x,tv) \dx + \int_{ A(R,t)} F(x,tv) \dx \nonumber \\ 
\geq \int_{K_t }F(x,tv) \dx + R t^2 \int _{A(R,t)} v^2 \dx,\label{eq_geometri_ined}
\end{align}
where $K_t = (\mathbb{R}^N \setminus A(R,t) ) \cap \supp (v).$ Using Remark \ref{remarkao}--(ii), for each $t>0,$ we get that  
\begin{equation*}
|F(x,tv)| \leq C,\quad\text{for a.e. } x \in K_t,
\end{equation*}
where $C>0$ does not depend in $x$ and $t$. Consequently, for any $x \in \supp v,$ $F(x,tv)\mathcal{X}_{K_t} (x) \rightarrow 0,\quad \text{as }t\rightarrow \infty,$ where we have used that, for any $x \in \supp (v),$  $
\mathcal{X}_{\mathbb{R}^N \setminus A(R,t)} (x) \rightarrow \mathcal{X}_{\mathbb{R}^N \setminus \supp (v)}(x) = 0,\quad \text{as }t\rightarrow \infty,$ where $\mathcal{X}_{A}$ denotes the characteristic function of the set $A.$ Thus Dominated Convergence Theorem implies that the first integral in the right-hand side of inequality \eqref{eq_geometri_ined} goes to zero as $t$ goes to infinity. By the same reason, we also have
\begin{equation*}
\lim _{t \rightarrow \infty } \int_{A(R,t)} v^2 \dx = \lim _{t \rightarrow \infty } \int_{\mathbb{R}^N } v^2 \mathcal{X}_{A(R,t)}\dx = \int_{\mathbb{R}^N } v^2\mathcal{X}_{\{v \neq 0\}} \dx=\int_{\mathbb{R}^N } v^2 \dx
\end{equation*}
In particular, there exists a positive number $t_{0,R}$ such that
\begin{equation}\label{eq_finalmente}
\frac{1}{2}\int_{\mathbb{R}^N} v^2 \dx<\int_{A(R,t)} v^2\dx,\quad\forall \, t>t_{0,R}.
\end{equation}
Replacing \eqref{eq_finalmente} in \eqref{eq_geometri_ined} we have for $R$ sufficiently large,
\begin{align*}
I(tv)&=\frac{t^2}{2}\|v\|^2 _{V} -\frac{t^2}{2}\int_{\mathbb{R}^N}b(x) v^2\dx - \int_{\mathbb{R}^N} F(x,tv)\dx\\
&\leq \frac{1}{2}\left(\|v\|^2_{V} - R\|v\|^2_2\right)t^2 -  \int_{K_t }F(x,tv) \dx <0,\quad\text{for }t>t_{0,R}. \qedhere
\end{align*}
\end{proof}
\begin{remark}\label{r_minimax}
\begin{enumerate}[label=(\roman*)]
\item In view of Lemma \ref{geometry}, we define the set $\Gamma^1 _{I} = \{ \gamma \in C([0,1],  H^s _{V}(\mathbb{R} ^N) ) : \gamma(0)=0,\ \|\gamma(1)\|_V>r,\ I (\gamma(1) )<0 \},$ and $c_1(I)= \inf_{\gamma \in \Gamma_I } \sup_{t \in [0,1]} I(\gamma (t)),$ the usual minimax level. Thus we have $c_1(I)= c(I).$
\item If $f(x,t)\equiv f(t),$ the mountain pass geometry can be proved by replacing \eqref{posi_algum} by \eqref{posi_algum_auto}. In fact, let $\xi _R$ as in the proof of Lemma \ref{geometry} and define $\eta _R (x) = \xi _R (|x|).$ Then, as in \cite[Remark 2.8]{pala_vald_sch}, we have
\begin{align*}
\int_{\mathbb{R}^N} F(\eta_R ) \dx &= \int _{B_R (x_0)} F(t_0) \dx + \int_{B_{R+1}(x_0)\setminus B_R(x_0)} F(\eta _R ) \dx\\
&\geq F(t_0)|B_R|-|B_{R+1}\setminus B_R|\left(\max_{t \in [0,t_0]} |F(t)|\right).
\end{align*}
Thus there exist positive constants $C_1$ and $C_2$ such that for $R$ large,
\begin{equation*}
\int_{\mathbb{R}^N} F(\eta_R ) \dx\geq C_1 R^N - C_2 R^{N-1}>0,
\end{equation*}
The mountain pass geometry now follows as in the proof of Lemma \ref{geometry}.
\item If $f(x,t)$ satisfies \eqref{bem_def} and either \eqref{A-R}--\eqref{posi_algum} or \eqref{quadratic}; and additionally \eqref{lim_infinito}. Suppose also that $a(x)$ and $f(x,t)$ fulfills \eqref{V_sirakov}--\eqref{V_as} and \eqref{lim_infinito_infty}, respectively. Then the limiting functional $I_\infty$ has the mountain pass geometry. In fact, \eqref{lim_infinito_infty} together with \cite[Lemma 8.3]{paper1} implies that $\lambda_u(t):=u(\cdot /t),$ $t\geq 0,$ belongs to $\Gamma_{I_\infty},$ where $u \in H^s(\mathbb{R}^N)$ is such that
\begin{equation}\label{pathadminf}
\int_{\mathbb{R}^N} F_\infty(u) -\frac{V_\infty}{2}u^2 \dx> 0.
\end{equation}
As in Remark \ref{r_minimax}--(ii), we can see that there exists $\varphi_0 \in C_0 ^\infty (\mathbb{R}^N)$ satisfying \eqref{pathadminf} and
\begin{equation*}
I_\infty(\lambda_{\varphi_0} (t)) = \frac{1}{2}t^{N-2s}[\varphi_0]_s^2 - t^N \left[\int_{\mathbb{R}^N} F_\infty(\varphi_0) -\frac{V_\infty}{2}\varphi^2_0 \right] \rightarrow -\infty,\text{ as}\quad t \rightarrow \infty.
\end{equation*}
Moreover, $I(u) > 0$ if $\|u\|_{V} =r,$ for $r>0$ small enough (see proof of Lemma \ref{geometry}).
\item Under the assumptions of Lemma \ref{geometry}, and if $F(x,t) >0$ for a.e. $x \in \mathbb{R}^N$ and $t \neq 0,$ then, for any $u \in H^s_{V}(\mathbb{R}^N)\setminus \{0\},$ $\zeta(t) = t u$ belongs to $\Gamma_I.$ In fact, in the proof of Lemma \ref{geometry} replacing $v$ by $u$ and considering the same notations,
\begin{equation*}
\begin{aligned}
& \int_{\mathbb{R}^N}F(x,tu) \dx \geq R t^2 \int _{A(R,t)} u^2 \dx,  \\
& \lim _{t \rightarrow \infty } \int_{A(R,t)} u^2 \dx = \lim _{t \rightarrow \infty } \int_{\mathbb{R}^N } u^2 \mathcal{X}_{A(R,t)}\dx = \int_{\mathbb{R}^N } u^2\mathcal{X}_{\{u \neq 0\}} \dx=\int_{\mathbb{R}^N } u^2 \dx,
\end{aligned}
\end{equation*}
which enables us to proceed as in \eqref{eq_finalmente} to get for $R$ is large
\begin{equation*}
\varphi(t):=I(tu) \leq \frac{1}{2}\left(\|u\|^2_{V} - R\|u\|^2_2\right)t^2 \rightarrow - \infty,\text{ as }t\rightarrow \infty,
\end{equation*}
Moreover, assuming \eqref{increasing_total} we can infer that $\zeta(t)$ has a unique critical point.
\end{enumerate}
\end{remark}
From the previous results the existence of bounded PS sequence at the mountain pass level
\begin{proposition}\label{p_psbounded}
Assume that $a(x)\in L^1 _{\loca} (\mathbb{R}^N)$ fulfills \eqref{V_sirakov}--\eqref{V_be} and $f(x,t)$ satisfies either
\begin{enumerate}[label=(\roman*)]
\item \eqref{bem_def}--\eqref{posi_algum}; or
\item \eqref{posi_algum}--\eqref{dife_esti};
\end{enumerate}
Then there exists a bounded sequence $(u_k)$ such that $I(u_k) \rightarrow c(I)$ and $I' (u_k) \rightarrow 0.$
\end{proposition}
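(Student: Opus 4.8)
The plan is to first extract a Cerami sequence for $I$ at the mountain pass level $c(I)$ from the Mountain Pass Theorem, and then to prove it is bounded — which is routine in case~(i) and is the delicate point in case~(ii). First note the mountain pass geometry is available in both settings: in case~(ii) the bounds $(f_4)$ and $(f_6)$ force the subcritical estimate of $(f_1)$ (Remark~\ref{remarkao}(ii)), so in either case $f(x,t)$ satisfies $(f_1)$ together with either $(f_2)$--$(f_3)$ or $(f_4)$, and since $a(x)=V(x)-b(x)\in L^1_{\loca}(\mathbb{R}^N)$ fulfils $(V_2)$--$(V_3)$, Lemma~\ref{geometry} gives that $I$ has the mountain pass geometry with $0<c(I)<\infty$. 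I would then invoke the variant of the Mountain Pass Theorem producing a Cerami sequence (Ekeland's variational principle applied on the path space $\Gamma^1_I$ of Remark~\ref{r_minimax}(i), or the quantitative deformation lemma) to obtain $(u_k)\subset H^s_V(\mathbb{R}^N)$ with $I(u_k)\to c(I)$ and $(1+\|u_k\|_V)\|I'(u_k)\|\to0$; in particular $I'(u_k)\cdot u_k\to0$, and the whole statement reduces to showing $\sup_k\|u_k\|_V<\infty$.

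In case~(i) this is the classical Ambrosetti--Rabinowitz estimate: by $(f_2)$ one has $\tfrac1\mu f(x,u_k)u_k-F(x,u_k)\ge0$ a.e., hence
\[
c(I)+o(1)=I(u_k)-\tfrac1\mu I'(u_k)\cdot u_k\ \ge\ \Big(\tfrac12-\tfrac1\mu\Big)\Big(\|u_k\|_V^2-\int_{\mathbb{R}^N}b(x)u_k^2\dx\Big),
\]
while by $(V_3)$ and the definition of $\mathcal{C}_V^{(\beta)}$ one has $\int_{\mathbb{R}^N}b(x)u_k^2\dx\le\big(\|b\|_\beta/\mathcal{C}_V^{(\beta)}\big)\|u_k\|_V^2$ with $\|b\|_\beta/\mathcal{C}_V^{(\beta)}<1$; thus the right-hand side dominates a fixed positive multiple of $\|u_k\|_V^2$ and boundedness follows.

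In case~(ii) I would argue by contradiction, supposing $\|u_k\|_V\to\infty$ along a subsequence and setting $v_k:=u_k/\|u_k\|_V$. From $(f_4)$ record that $F(x,t)/t^2\to+\infty$ uniformly in $x$ as $|t|\to\infty$ and that $\inf_{x,\,t\ne0}F(x,t)/t^2>-\infty$. Distinguish whether $(v_k)$ vanishes or not. If it does not, then, passing to a subsequence, $\int_{B_r(y_k)}v_k^2\dx\ge\delta>0$ for suitable $r,\delta>0$ and $y_k\in\mathbb{R}^N$, so $\int_{B_r(y_k)}u_k^2\dx\ge\delta\|u_k\|_V^2$; combining this with the two facts above and $\|u_k\|_2\le C\|u_k\|_V$ one gets $\int_{\mathbb{R}^N}F(x,u_k)\dx\ge\|u_k\|_V^2$ for $k$ large, whence $I(u_k)\le-\tfrac12\|u_k\|_V^2\to-\infty$, contradicting $I(u_k)\to c(I)$. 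Hence $(v_k)$ vanishes, and by the fractional Lions vanishing lemma (equivalently, by Theorem~\ref{teo_tinta_frac_sub} with all profiles zero) $v_k\to0$ in $L^q(\mathbb{R}^N)$ for every $q\in(2,2_s^\ast)$. Consequently $\int_{\mathbb{R}^N}b(x)v_k^2\dx\to0$ (as $2\beta'\in(2,2_s^\ast)$), and from the subcritical bound $|F(x,t)|\le\varepsilon t^2+C_\varepsilon|t|^p$ with $p\in(2,2_s^\ast)$ (Remark~\ref{remarkao}(ii)) one obtains $\int_{\mathbb{R}^N}F(x,mv_k)\dx\to0$ for each fixed $m>0$, so $I(mv_k)\to m^2/2$. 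Choosing $t_k\in[0,1]$ maximizing $t\mapsto I(tu_k)$ on $[0,1]$, for $k$ large $m/\|u_k\|_V\in(0,1)$ gives $I(t_ku_k)\ge I(mv_k)$, and letting $m\to\infty$ forces $I(t_ku_k)\to\infty$. Since $I(0)=0$ and $I(u_k)\to c(I)<\infty$, for $k$ large the maximum is interior, so $I'(t_ku_k)\cdot(t_ku_k)=0$ and $I(t_ku_k)=\int_{\mathbb{R}^N}\mathcal{F}(x,t_ku_k)\dx$. On the other hand the Cerami property yields $\int_{\mathbb{R}^N}\mathcal{F}(x,u_k)\dx=I(u_k)-\tfrac12 I'(u_k)\cdot u_k\to c(I)$; using $\mathcal{F}\ge0$ (from $(f_5)$), the estimate $(f_6)$, and the fact that $t\mapsto I(tu_k)/t^2$ is nonincreasing on $(0,\infty)$ (its derivative being $-2t^{-3}\int_{\mathbb{R}^N}\mathcal{F}(x,tu_k)\dx\le0$), one concludes that $\int_{\mathbb{R}^N}\mathcal{F}(x,t_ku_k)\dx$ stays bounded in $k$, contradicting $I(t_ku_k)\to\infty$. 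This yields $\sup_k\|u_k\|_V<\infty$ also in case~(ii); the scheme mirrors that of de Marchi~\cite{reinaldo} and Ding--Lee~\cite{dinglee}, transplanted to $H^s_V(\mathbb{R}^N)$.

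The hard part is precisely the boundedness in case~(ii): lacking the Ambrosetti--Rabinowitz inequality one cannot estimate $\|u_k\|_V$ head-on, and the proof must interlace the concentration--compactness alternative for $(v_k)$ — where the superquadratic growth $(f_4)$ is what destroys the non-vanishing branch — with the subtle role of $(f_5)$--$(f_6)$, which are tailored precisely so that $\int_{\mathbb{R}^N}\mathcal{F}(x,t_ku_k)\dx$ remains bounded along the truncated path $t\mapsto I(tu_k)$. Everything else is standard.
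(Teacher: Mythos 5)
Your case~(i) is exactly the paper's argument (Mountain Pass Theorem plus the Ambrosetti--Rabinowitz estimate for $I(u_k)-\tfrac1\mu I'(u_k)\cdot u_k$, with \eqref{V_be} absorbing the $b(x)$ term), and your reduction of case~(ii) to the boundedness of a Cerami sequence is also how the paper proceeds. The problem is the decisive step of your case~(ii).

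Concretely: after establishing that $(v_k)$ vanishes and that $I(t_ku_k)\to\infty$ with $I'(t_ku_k)\cdot(t_ku_k)=0$, hence $I(t_ku_k)=\int_{\mathbb{R}^N}\mathcal{F}(x,t_ku_k)\dx$, you assert that $\int_{\mathbb{R}^N}\mathcal{F}(x,t_ku_k)\dx$ stays bounded, citing $\mathcal{F}\ge0$, \eqref{dife_esti}, and the monotonicity of $t\mapsto I(tu_k)/t^2$. That monotonicity (derivative $-2t^{-3}\int\mathcal{F}(x,tu_k)\dx\le0$) only yields the \emph{lower} bound $\int\mathcal{F}(x,t_ku_k)\dx=I(t_ku_k)\ge t_k^2\,I(u_k)$; it gives no upper bound. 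The classical way to close this Jeanjean-type argument is the comparison $\int\mathcal{F}(x,t_ku_k)\dx\le\int\mathcal{F}(x,u_k)\dx\to c(I)$, which requires $t\mapsto\mathcal{F}(x,t)$ to be nondecreasing in $|t|$, i.e.\ $t\mapsto f(x,t)/|t|$ nondecreasing --- precisely the hypothesis this paper is at pains to avoid (it appears only as \eqref{increasing}/\eqref{increasing_total} in other statements). Neither \eqref{dife_posi} nor \eqref{dife_esti} supplies this comparison, so the contradiction is not reached and the gap is genuine. The paper's proof takes the Ding--Lee route instead: from $I'(u_k)\cdot u_k/\|u_k\|_{V}^2\to0$ it suffices to show $\int_{\mathbb{R}^N} f(x,u_k)v_k\,\|u_k\|_{V}^{-1}\dx<\varepsilon$ for large $k$, which is done by splitting $\mathbb{R}^N$ into $\Omega_k(0,a_\varepsilon)$, $\Omega_k(a_\varepsilon,b_\varepsilon)$, $\Omega_k(b_\varepsilon,\infty)$ and using the uniform bound $\int\mathcal{F}(x,u_k)\dx\le c(I)+1$ together with \eqref{quadratic}--\eqref{dife_esti} (smallness of $f$ near $0$, the weight $m_a^b$ and $\|u_k\|_{V}^{-2}$ on the middle range, and H\"older with exponent $p_0$ plus $|\Omega_k(b,\infty)|\to0$ on the tail); this forces $1=\lim\|u_k\|_{V}^{-2}\int b(x)u_k^2\dx$, contradicting \eqref{V_be}. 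You would need to replace your final step by this (or an equivalent) mechanism.
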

\begin{proof}
(i) In view of Lemma \ref{geometry}, the standard Mountain Pass Theorem implies the existence  of $(u_k)\subset H_{V} ^s (\mathbb{R}^N)$ such that $I(u_k) \rightarrow c(I)$ and $I'(u_k) \rightarrow0.$ For large $k,$ we have
\begin{align*}
c(I)+1+\| u_ k \|_{V}  &\geq I(u_k) - \frac{1}{\mu} I'(u_k) \cdot u_k \\
&=\left(\frac{1}{2} - \frac{1}{\mu} \right)\left(1-\frac{\|b(x)\|_{\beta}}{\mathcal{C}_{V} ^{(\beta)} } \right)  \|u_k\|_{V} ^2 - \int _{\mathbb{R}^N} F(x,u_k) - \frac{1}{\mu} f(x,u_k) u_k \dx\\
&\geq \left(\frac{1}{2} - \frac{1}{\mu} \right)\left(1-\frac{\|b(x)\|_{\beta}}{\mathcal{C}_{V} ^{(\beta)} } \right) \|u_k\|_{V} ^2,
\end{align*}
which implies that $(u_k)$ is bounded in $H^s _V (\mathbb{R}^N).$

(ii) The proof follows as in \cite[Lemma 2.5]{reinaldo} and \cite[Lemma 4.1]{dinglee}. By Lemma \ref{geometry}, applying a variant of the Mountain Pass Theorem, we obtain a Cerami sequence $(u_k)$ for $I$ at the level $c(I),$ precisely, $I(u_k) \rightarrow c(I)\text{ and }(1+\|u_k\|_{V})\|I'(u_k)\|_{\ast} \rightarrow 0,$ where $\| \cdot \|_{\ast}$ denote the usual norm of the dual of $H_{V}^s(\mathbb{R}^N).$ We claim that $(u_k)$ is bounded in $H^s _V (\mathbb{R}^N).$ Assume by contradiction that, up to a subsequence, $\|u_k\|_{V} \rightarrow \infty.$ Let $v_k = u_k / \| u_k \| _V.$ Thus
\begin{equation*}
\lim_{k \rightarrow \infty}\left[1-\int_{\mathbb{R}^N} \frac{f(x,u_k) }{\|u_k\|_{V}} v_k\dx - \frac{1}{\|u_k\|^2_{V}} \int_{\mathbb{R}^N}b(x)u^2_k\dx\right]=\lim_{k \rightarrow \infty}\left[ \frac{1}{\|u_k\|^2_{V}} I'(u_k) \cdot u_k \right] =0.
\end{equation*}
We can use an indirect arguments to prove that $ \lim_{k \rightarrow \infty} \int_{\mathbb{R}^N} f(x,u_k)v_k \|u_k\|^{-1}_{V} \dx = 0,
$ which by \eqref{V_be}, leads to the following contradiction,
\begin{equation*}
1=\lim_{k \rightarrow \infty} \frac{1}{\|u_k\|^2_{V}} \int_{\mathbb{R}^N}b(x)u^2_k\dx <\frac{1}{2}.
\end{equation*}
For $0\leq a<b \leq \infty ,$ defining $ \Omega_k (a,b) = \left\lbrace x \in \mathbb{R}^N : a \leq |u_k(x)| \leq b\right\rbrace,
$ we are going to prove that for $0<\varepsilon<1,$ there exist $k_\varepsilon, a_\varepsilon,b_\varepsilon$ such that
\begin{multline}
\int_{\mathbb{R}^N} \frac{f(x,u_k)}{\|u_k\|_{V}}v_k \dx = \int_{\Omega_k (0,a_\varepsilon)} \frac{f(x,u_k)}{\|u_k\|_{V}}v_k \dx \\+ \int_{\Omega_k (a_\varepsilon,b_\varepsilon)} \frac{f(x,u_k)}{\|u_k\|_{V}}v_k \dx + \int_{\Omega_k (b_\varepsilon,\infty)} \frac{f(x,u_k)}{\|u_k\|_{V}}v_k \dx <\varepsilon,\quad\forall \, k> k_\varepsilon.\label{eq_est_BPS}
\end{multline}
In order to do that, we first make some estimates involving $\mathcal{F}(x,t).$ Define $g(r) = \inf\left\lbrace   \mathcal{F}(x,t) : x \in \mathbb{R}^N,\ |t| > r \right\rbrace,$ which is positive and goes to infinity as $r\rightarrow \infty.$ Indeed, thanks to \eqref{dife_posi} and \eqref{dife_esti}, we have
\begin{equation*}
a_0 \mathcal{F}(x,t) \geq \left|\frac{f(x,t)}{t}\right| ^{p_0} > \left|2 \frac{F(x,t)}{t^2}\right|^{p_0},\quad\forall \,  |t|>R_0.
\end{equation*}
Consequently, by \eqref{quadratic}, we obtain that $\mathcal{F}(x,t) \rightarrow \infty,$ as $|t| \rightarrow \infty,$ uniformly in $x.$ Due to \eqref{dife_posi}, we also can define the positive number $m_a^b = \inf \left\lbrace \mathcal{F}(x,t) / t^2: x \in \mathbb{R}^N,\ a \leq |t| \leq b \right\rbrace.$ Using these notations, we see that there exists $k_0$ such that
\begin{align}
c(I)+1 & \geq I(u_k) - \frac{1}{2} I'(u_k) \cdot u_k \nonumber \\&=\int_{\Omega _k (0,a)} \mathcal{F}(x,u_k)\dx + \int_{\Omega _k (a,b)} \mathcal{F}(x,u_k)\dx+\int_{\Omega _k (b,\infty)} \mathcal{F}(x,u_k)\dx\nonumber\\
&\geq \int_{\Omega _k (0,a)} \mathcal{F}(x,u_k)\dx + m_a ^b \int_{\Omega _k (a,b)} u^2 _k \dx + g(b) |\Omega_k (b, \infty) |,\quad\forall \, k> k_0.\label{eq_unif_bound}
\end{align}
Inequality \eqref{eq_unif_bound} implies $\lim_{b \rightarrow \infty} |\Omega _k (b, \infty)|=0,\quad\text{uniformly in }k>k_0.$ Moreover, fixed $2<q\leq 2_s^\ast,$ we have
\begin{equation*}
\int_{\Omega _k (a,b)} |v_k|^q\dx \leq \left(\int _{\Omega _k (a,b) } |v_k| ^{2^\ast_s}\right)^{q/2_s^\ast} |\Omega_k (a,b)|^{(2_s^\ast -q)/2_s^\ast},
\end{equation*}
in particular,
\begin{equation}\label{eq_segunda_aux}
\lim _{b \rightarrow \infty} \int_{\Omega _k (a,b)} |v_k|^q\dx  =0,\quad\text{uniformly in }k>k_0.
\end{equation}
On the other hand, it follows that
\begin{equation}\label{eq_primeira_aux}
\int_{\Omega _k (a,b)} v_k^2 \dx = \frac{1}{\| u_k\|^2_{V}} \int_{\Omega _k (a,b)} u_k ^2\dx \leq \left(\frac{1}{\| u_k\|^2_{V}}\right) \left( \frac{1}{(c(I)+1)m_a^b}\right)\rightarrow 0,\text{ as }k\rightarrow \infty.
\end{equation}
We now pass to prove the estimate \eqref{eq_est_BPS}. By \eqref{quadratic}, there exists $a_\varepsilon >0$ such that 
\begin{equation*}
|f(x,t)|<\varepsilon |t|,\quad\text{a.e. }x \in \mathbb{R}^N,\text{ provided that } |t|< a_\varepsilon.
\end{equation*}
Thus, using \eqref{eq_primeira_aux} we have
\begin{equation*}
 \int_{\Omega_k (0,a_\varepsilon)} \frac{f(x,u_k)}{\|u_k\|_{V}}v_k \dx\leq\int_{\Omega_k (0,a_\varepsilon)\cap\{|u_k|>0\}} \frac{f(x,u_k)}{|u_k|} v^2_k \dx<\varepsilon /3,\quad\forall \, k> k^{(1)}_\varepsilon.
\end{equation*}
Taking $2q_0:=2p_0 / (p_0 - 1)$ and using \eqref{dife_esti}, \eqref{eq_segunda_aux} we have
\begin{align*}
\int_{\Omega_k (b_\varepsilon,\infty)} \frac{f(x,u_k)}{\|u_k\|_{V}}v_k \dx &\leq \int_{\Omega_k (b_\varepsilon,\infty)}\frac{f(x,u_k)}{|u_k|} v^2_k \dx\\
&\leq \left(a_0(c(I)+1) \right)^{1/p_0} \left(\int_{\Omega_k (b_\varepsilon,\infty)}|v_k|^{2q_0} \dx \right)^{1/q_0} < \varepsilon /3,\quad\forall \, k> k^{(2)}_\varepsilon.
\end{align*}
Using \eqref{quadratic} we get that $|f(x,u_k)| \leq C_\varepsilon |u_k|,\quad\text{a.e. }x \in \Omega_k (a_\varepsilon,b_\varepsilon),$ for $C_\varepsilon>0$ which does not depend on $k$ and $x$. Thus,
\begin{equation*}
\int_{\Omega_k (a_\varepsilon,b_\varepsilon)} \frac{f(x,u_k)}{\|u_k\|_{V}}v_k \dx \leq \int_{\Omega_k (a_\varepsilon,b_\varepsilon)} \frac{f(x,u_k)}{|u_k|} v^2_k \dx \leq C_\varepsilon  \int_{\Omega_k (a_\varepsilon,b_\varepsilon)} v^2_k \dx<\varepsilon /3,\quad\forall \, k> k^{(3)}_\varepsilon,
\end{equation*}
where $k^{(3)}_\varepsilon>k_0$ is obtained from \eqref{eq_primeira_aux}.
\end{proof}

\subsection{Behavior of weak profile decomposition convergence under nonlinearities} We now pass to describe the limit of the profile decomposition (Theorem \ref{teo_tinta_frac_sub}) for bounded sequences of the associated functional.
\begin{proposition}\label{lemmaconverge}
If $a(x)\equiv V(x)\in L ^1 _{\loca } (\mathbb{R}^N)$ and \eqref{bem_def}, \eqref{V_sirakov} hold, then for $(u_k) \subset H^s_{V} (\mathbb{R} ^N)$ a bounded sequence such that $u_k \rightarrow u$ in $L^p (\mathbb{R}^N),$ for some $p \in (2,2_s ^\ast),$ up to subsequence, we have
\begin{equation}\label{lema_conve}
\lim _{k \rightarrow \infty}\int _{\mathbb{R}^N} f(x,u_k) u_k \dx = \int _{\mathbb{R}^N} f(x,u)u \dx.
\end{equation}
Moreover, if $(v_k)$ is a bounded sequence in $H^s_{V}(\mathbb{R} ^N)$ with $u_k - v_k \rightarrow 0$ in $L^p (\mathbb{R}^N),$ for some $2<p<2_s ^\ast,$ then, up to a subsequence,
\begin{equation}\label{lema_conve_moreover}
\lim _ {k\rightarrow \infty} \int _{\mathbb{R}^N} F(x,u_k) - F(x,v_k) \dx=0.
\end{equation}
\end{proposition}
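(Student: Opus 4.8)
The plan is to derive both identities from the growth bound \eqref{bem_def} together with strong $L^q$-convergence of the sequences for \emph{every} exponent $q\in(2,2^\ast_s)$, treating the critical power $|t|^{2^\ast_s-1}$ in \eqref{bem_def} only through the arbitrarily small constant $\varepsilon$ it carries. First I would record the available compactness: since $V\in L^1_{\loca}(\mathbb{R}^N)$ satisfies \eqref{V_sirakov}, Proposition \ref{welldef} gives the continuous embedding $H^s_{V}(\mathbb{R}^N)\hookrightarrow H^s(\mathbb{R}^N)$, so any bounded sequence in $H^s_{V}(\mathbb{R}^N)$ is bounded in $H^s(\mathbb{R}^N)$ and hence, by \eqref{Hembd}, in $L^2(\mathbb{R}^N)\cap L^{2^\ast_s}(\mathbb{R}^N)$. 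Using the elementary interpolation inequality $\|w\|_r\le\|w\|_{r_1}^{\theta}\|w\|_{r_2}^{1-\theta}$ with $r_1,r_2\in\{2,p,2^\ast_s\}$, strong convergence in the single $L^p$ of the hypothesis upgrades to strong convergence in $L^q(\mathbb{R}^N)$ for all $q\in(2,2^\ast_s)$; in particular $u_k\to u$ (resp.\ $u_k-v_k\to 0$) in $L^{p_\varepsilon}(\mathbb{R}^N)$ for whichever subcritical exponent $p_\varepsilon$ appears in \eqref{bem_def}. Passing to a further subsequence we may also assume $u_k\to u$ a.e.

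For \eqref{lema_conve} I would write $\int_{\mathbb{R}^N}f(x,u_k)u_k\dx-\int_{\mathbb{R}^N}f(x,u)u\dx=\int_{\mathbb{R}^N}f(x,u_k)(u_k-u)\dx+\int_{\mathbb{R}^N}\big(f(x,u_k)-f(x,u)\big)u\dx$ and treat the two pieces differently. For the first one, bounding $|f(x,u_k)|$ by \eqref{bem_def} and applying H\"older's inequality yields
\[
\int_{\mathbb{R}^N}|f(x,u_k)|\,|u_k-u|\dx\le \varepsilon\Big(\|u_k\|_2\|u_k-u\|_2+\|u_k\|_{2^\ast_s}^{2^\ast_s-1}\|u_k-u\|_{2^\ast_s}\Big)+C_\varepsilon\|u_k\|_{p_\varepsilon}^{p_\varepsilon-1}\|u_k-u\|_{p_\varepsilon};
\]
the bracket is bounded uniformly in $k$ while $\|u_k-u\|_{p_\varepsilon}\to 0$, so $\limsup_k\int_{\mathbb{R}^N}|f(x,u_k)|\,|u_k-u|\dx\le C\varepsilon$, and letting $\varepsilon\downarrow 0$ the first piece vanishes. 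For the second piece I would invoke Vitali's convergence theorem: $f(x,u_k)u\to f(x,u)u$ a.e., and for any measurable $E\subset\mathbb{R}^N$ the bound \eqref{bem_def} and H\"older give $\int_E|f(x,u_k)u|\dx\le C\big(\|u\|_{L^2(E)}+\|u\|_{L^{2^\ast_s}(E)}+\|u\|_{L^{p_\varepsilon}(E)}\big)$ with $C$ independent of $k$ (here $\varepsilon$ is simply fixed); since $u\in L^2\cap L^{2^\ast_s}\cap L^{p_\varepsilon}$ this gives both uniform integrability (as $|E|\to 0$) and tightness (taking $E=\mathbb{R}^N\setminus B_R$, $R\to\infty$), whence $\int_{\mathbb{R}^N}f(x,u_k)u\dx\to\int_{\mathbb{R}^N}f(x,u)u\dx$.

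For \eqref{lema_conve_moreover} I would use the fundamental theorem of calculus, $F(x,u_k)-F(x,v_k)=\int_0^1 f\big(x,v_k+t(u_k-v_k)\big)(u_k-v_k)\,\mathrm{d}t$, so that
\[
\Big|\int_{\mathbb{R}^N}\big(F(x,u_k)-F(x,v_k)\big)\dx\Big|\le\int_0^1\!\!\int_{\mathbb{R}^N}\big|f\big(x,v_k+t(u_k-v_k)\big)\big|\,|u_k-v_k|\dx\,\mathrm{d}t .
\]
Since $z_{k,t}:=(1-t)v_k+tu_k$ is bounded in $L^2\cap L^{2^\ast_s}$ uniformly in $k$ and $t\in[0,1]$, the estimate from the previous paragraph (with $u_k-u$ replaced by $u_k-v_k$, which tends to $0$ in every $L^q$, $q\in(2,2^\ast_s)$) gives $\int_{\mathbb{R}^N}|f(x,z_{k,t})|\,|u_k-v_k|\dx\le C\varepsilon+C_\varepsilon\|u_k-v_k\|_{p_\varepsilon}$ uniformly in $t$; integrating in $t$, then letting $k\to\infty$ and finally $\varepsilon\downarrow 0$ concludes.

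The only genuine obstacle is the critical power $|t|^{2^\ast_s-1}$ in \eqref{bem_def}: paired with $u_k$ or with $u_k-v_k$ it produces a quantity such as $\int_{\mathbb{R}^N}|u_k|^{2^\ast_s-1}|u_k-v_k|\dx$ which is merely bounded---there is no compactness at the critical Sobolev exponent---so one cannot pass to the limit termwise. What rescues the argument is precisely that this contribution always appears multiplied by $\varepsilon$, which permits $\limsup$-estimates of order $\varepsilon$ followed by $\varepsilon\downarrow 0$; every other contribution (the subcritical term, and the interaction $f(x,u_k)u$ in \eqref{lema_conve}) is controlled by genuine strong convergence and equi-integrability.
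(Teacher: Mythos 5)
Your argument is correct, and it overlaps with the paper's proof only in its first step: the splitting of \eqref{lema_conve} into $\int f(x,u_k)(u_k-u)$ plus $\int (f(x,u_k)-f(x,u))u$, and the treatment of the first of these two terms by H\"older together with \eqref{bem_def} (bounded critical contributions carrying the factor $\varepsilon$, vanishing subcritical contribution) are exactly as in the paper. For the second term the paper does not use Vitali; it introduces the sets $X^\varepsilon_k=\{x: \varepsilon(|u_k|+|u_k|^{2^\ast_s-1})\le C_\varepsilon|u_k|^{p_\varepsilon-1}\}$, applies dominated convergence on $X^\varepsilon_k$ with a dominating function $h_\varepsilon\in L^{p_\varepsilon}$ extracted from the $L^{p_\varepsilon}$-convergence, and bounds the complement by $C\varepsilon$; your uniform-integrability-plus-tightness argument achieves the same thing and is, if anything, more transparent, since the required equi-integrability comes directly from the $k$-independent bound $\int_E|f(x,u_k)u|\le C(\|u\|_{L^2(E)}+\|u\|_{L^{2^\ast_s}(E)}+\|u\|_{L^{p_\varepsilon}(E)})$ against the fixed function $u$. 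The more substantial divergence is in \eqref{lema_conve_moreover}: the paper first replaces $u_k,v_k$ by smooth approximations, then regards $\beta(u)=\int F(x,u)$ as a G\^ateaux-differentiable functional on $(C_0(\mathbb{R}^N),\|\cdot\|_{p_\varepsilon})$ and invokes the mean value inequality $|\beta(u)-\beta(v)|\le\sup_{w\in[u,v]}\|\beta'_G(w)\|\,\|u-v\|_{p_\varepsilon}$, whereas you write the difference pointwise via the fundamental theorem of calculus and estimate $\int_0^1\int_{\mathbb{R}^N}|f(x,(1-t)v_k+tu_k)|\,|u_k-v_k|\,\dx\,\mathrm{d}t$ directly by H\"older, uniformly in $t$. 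Your route is more elementary and avoids both the density reduction and the (somewhat delicate) claim that the G\^ateaux derivative is bounded in the $\|\cdot\|_{p_\varepsilon}$-operator norm on bounded sets of $H^s_V$; the only ingredients you need beyond \eqref{bem_def} are the uniform $L^2\cap L^{2^\ast_s}\cap L^{p_\varepsilon}$ bounds on the segment $(1-t)v_k+tu_k$ and the convergence $\|u_k-v_k\|_{p_\varepsilon}\to0$ obtained by interpolation, both of which you establish. In short: same skeleton for the first identity, a genuinely different and arguably cleaner mechanism for the second.
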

\begin{proof}
Note that $u_k \rightarrow u$ in $L ^q (\mathbb{R}^N)$ for all $q \in (2,2_ s ^\ast).$ This follows by a interpolation inequality, if $q<p$ then $\|u_k -u \| _q \leq \|u_k - u\|_2 ^{\theta} \|u_k - u\| _{p} ^{1- \theta}$ where $1/q = \theta / 2  + (1-\theta ) / p,$ and if $q>p$ then $\|u_k -u \| _q \leq \|u_k - u\|_p ^{\theta} \|u_k - u\| _{2_s ^\ast} ^{1- \theta} $ for $1/q = \theta / p  + (1-\theta ) / 2_s ^\ast.$ On the other hand, by \eqref{comp_loc} and Proposition \ref{welldef}, $u \in H^s _{V}(\mathbb{R}^N)$ and
\begin{equation*}
u_k (x) \rightarrow u(x) \text{ as }k \rightarrow \infty,\text{ a.e. }x \in\mathbb{R}^N\text{ and }|u_k(x)|,|u(x)|\leq h _\varepsilon (x)\text{ a.e. }x \in \mathbb{R}^N,\ k \in \mathbb{N},
\end{equation*}
for some $h _\varepsilon \in L ^{p_\varepsilon } (\mathbb{R} ^N).$ Now note that
\begin{equation*}
\int _{\mathbb{R}^N} |f(x,u_k)u_k - f(x,u)u| \dx \leq \int _{\mathbb{R}^N} |f(x,u_k) (u_k - u)| \dx + \int _{\mathbb{R}^N} |(f(x,u_k)- f(x,u)) u |\dx.
\end{equation*}
The first integral can be estimated by H\"{o}lder inequality as follows
\begin{equation*}
\int _{\mathbb{R}^N} |f(x,u_k) (u_k - u)| \dx \leq \varepsilon \left(\|u_k\|_2 \|u_k - u\| _2 + \|u_k\|^{2_s ^\ast -1} _{2_s ^\ast} \|u_k - u \| _{2_s ^\ast } \right)+C_\varepsilon \|u_k\| _{p_{\varepsilon} } ^{p _\varepsilon -1} \|u_k - u\| _{p_{\varepsilon}}.
\end{equation*}
For the second one, consider 
\begin{align*}
X^\varepsilon_k &:= \left\lbrace  x \in \mathbb{R}^N :\varepsilon (|u_k(x)| + |u_k(x)|^{2_s ^\ast -1}) \leq C_ \varepsilon |u_k (x)| ^{p_\varepsilon-1} \right\rbrace \text{ and}\\
X^\varepsilon &:= \left\lbrace  x \in \mathbb{R}^N :\varepsilon (|u(x)| + |u(x)|^{2_s ^\ast -1}) \leq C_ \varepsilon |u (x)| ^{p_\varepsilon-1} \right\rbrace .
\end{align*}
Thus
\begin{equation*}
\int _{ X^\varepsilon _k} |(f(x,u_k)- f(x,u)) u | \dx = \int _{\mathbb{R}^N} |(f(x,u_k)- f(x,u)) u | \mathcal{X} _{X^\varepsilon _k} \dx,
\end{equation*}
Since $ \mathcal{X} _{X^\varepsilon _k} (x)\rightarrow \mathcal{X} _{X^\varepsilon } (x)$ in $\mathbb{R}^N$ and $|(f(x,u_k)- f(x,u)) u  \mathcal{X} _{X^\varepsilon _k} | \leq 2 C_\varepsilon h _\varepsilon ^{p _\varepsilon} \in L^1 (\mathbb{R}^N),$ we may apply the Dominated Convergence Theorem to conclude
\begin{equation*}
\lim _{k \rightarrow \infty} \int _{X ^\varepsilon _k} |(f(x,u_k)- f(x,u)) u | \dx =0.
\end{equation*}
On the other way,
\begin{equation*}
\limsup _{k \rightarrow \infty}\int _{\mathbb{R}^N \setminus X^\varepsilon_k} |(f(x,u_k)- f(x,u)) u |\dx \leq C\varepsilon.
\end{equation*}
where $C>0$ does not depend in $\varepsilon$ and $k.$ Since $\varepsilon$ is arbitrary, \eqref{lema_conve} holds.\\
Now, let us prove \eqref{lema_conve_moreover}. Choose $(\bar{u} _k),(\bar{v}_k)$ in $C^\infty _0 (\mathbb{R^N})$ such that $\lim _{k \rightarrow \infty} \| \bar{u_k} - u_k \| _{V} = \lim _{k \rightarrow \infty} \| \bar{v_k} - v_k \| _{V}=0.$ Thus it suffices to prove that
\begin{equation}\label{lemma_suffices}
\lim _{k \rightarrow \infty }\int _{\mathbb{R}^N}  F(x,\bar{u_k}) - F (x,\bar{v_k})  \dx=0.
\end{equation}
Consider $E:= (C_0 (\mathbb{R}^N), \| \cdot \| _{p_\varepsilon })$ and $\beta : E \rightarrow \mathbb{R},$ given by $\beta (u) = \int _{\mathbb{R}^N} F(x,u)\dx$ with Gateaux derivative $\beta ' _G  (u) \cdot v= \int _{\mathbb{R}^N} f(x,u)v\dx.$ Thus, we may apply the Mean Value Theorem to get
\begin{equation}\label{MVT_lemma}
|\beta(u) - \beta(v)| \leq \sup _{w \in E,w \in [u,v]} \|\beta ' _G  (w) \| \ \| u - v\| _{p _\varepsilon}, \quad \forall \, u,v\in E,
\end{equation}
where $[u,v] = \{t u +(1-t)v:t \in [0,1]\}$. Since $(u_k),(v_k),(\bar{u} _k)$ and $(\bar{v}_k)$ belongs to a bounded set $B$ in $H^s _{V}(\mathbb{R}^N),$ and using $H^s_V (\mathbb{R}^N) \hookrightarrow L ^{p_ \varepsilon } (\mathbb{R}^N)$, we have that $B \cap E$ is bounded in $E.$ Thus $\beta ' _G$ is bounded in $B \cap E,$ which allows us to take $u=\bar{u}_k$ and $v=\bar{v}_k$ in \eqref{MVT_lemma} to get \eqref{lemma_suffices}.
\end{proof}
The next result is the nonlocal version of \cite[Lemma 5.1]{tintabook} and it is a generalization of the Brezis-Lieb Lemma.
\begin{proposition}\label{comp1}
Assume that $f(x,t)$ satisfies \eqref{bem_def} and \eqref{lim_infinito}. Let $(u_k)\subset H^s (\mathbb{R}^N) $ be a bounded sequence and $(w ^{(n)}) _{n \in \mathbb{N}_{0} }$ in $H^s(\mathbb{R}^N),$ given by the Theorem \ref{teo_tinta_frac_sub}. Then
\begin{equation*}
\lim _{k \rightarrow \infty}\int _{\mathbb{R}^N} F(x, u_k) \dx =\int _{\mathbb{R}^N}F(x,w^{(1)}) \dx + \sum _{n \in \mathbb{N}_0,n>1 } \int _{\mathbb{R} ^N} F _\mathcal{P} (x,w^{(n)}) \dx.
\end{equation*}
\end{proposition}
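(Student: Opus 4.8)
The plan is to run a truncated-profile version of the Brezis--Lieb argument, in three moves. For $M\ge 1$ put $S_k^M:=\sum_{n=1}^M w^{(n)}(\cdot+y_k^{(n)})$ and $r_k^M:=u_k-S_k^M$; by \eqref{seis.tres.sub} the sequences $(S_k^M)_k$ and $(r_k^M)_k$ are bounded in $H^s(\mathbb{R}^N)$, and by \eqref{seis.quatro.sub} together with the uniform-in-$k$ convergence of that series, for every $p\in(2,2_s^\ast)$ one has $\limsup_k\|r_k^M\|_p=:\eta_M(p)\to 0$ as $M\to\infty$. The first move is to show the remainder is negligible: given $\varepsilon>0$, let $C_\varepsilon,p_\varepsilon$ be as in \eqref{bem_def}, write $\int_{\mathbb{R}^N}F(x,u_k)-F(x,S_k^M)\dx=\int_0^1\!\int_{\mathbb{R}^N}f(x,S_k^M+\tau\,r_k^M)\,r_k^M\dx\,\dtau$, and bound the inner integrand using \eqref{bem_def} and H\"older's inequality exactly as in the proof of \eqref{lema_conve} in Proposition \ref{lemmaconverge}; since the $L^2$ and $L^{2_s^\ast}$ norms involved stay bounded and $\limsup_k\|r_k^M\|_{p_\varepsilon}=\eta_M(p_\varepsilon)$, this yields $\limsup_k\big|\int_{\mathbb{R}^N}F(x,u_k)\dx-\int_{\mathbb{R}^N}F(x,S_k^M)\dx\big|\le C\varepsilon+C_\varepsilon\,\eta_M(p_\varepsilon)$, with $C$ independent of $\varepsilon$ and $M$.

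The second, and main, move is to evaluate $\lim_k\int_{\mathbb{R}^N}F(x,S_k^M)\dx$ for fixed $M$. I would approximate each $w^{(n)}$ in $H^s(\mathbb{R}^N)$ by $\phi^{(n)}\in C_0^\infty(\mathbb{R}^N)$ and set $\widetilde S_k^M:=\sum_{n=1}^M\phi^{(n)}(\cdot+y_k^{(n)})$; the same mean value bound shows that $\int F(x,S_k^M)\dx$ and $\int F(x,\widetilde S_k^M)\dx$ differ by an amount that is small uniformly in $k$ once the $\phi^{(n)}$ are close to the $w^{(n)}$. Because $|y_k^{(n)}-y_k^{(m)}|\to\infty$ for $n\ne m$ and only finitely many pairs occur, for $k$ large the translated supports $\supp\phi^{(n)}(\cdot+y_k^{(n)})$ are pairwise disjoint, so, using $F(x,0)=0$ and the change of variables $y=x+y_k^{(n)}$, one gets $\int_{\mathbb{R}^N}F(x,\widetilde S_k^M)\dx=\sum_{n=1}^M\int_{\mathbb{R}^N}F(y-y_k^{(n)},\phi^{(n)}(y))\dy$. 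The term $n=1$ is exactly $\int F(x,\phi^{(1)})\dx$ since $y_k^{(1)}=0$. For $n\ge 2$ one has $|y_k^{(n)}|\to\infty$; since $\phi^{(n)}$ is compactly supported with range in a compact set $K\subset\mathbb{R}$, integrating the uniform convergence in \eqref{lim_infinito} gives $|F(x,t)-F_\mathcal{P}(x,t)|\to 0$ as $|x|\to\infty$, uniformly for $t\in K$, while $y_k^{(n)}\in\mathbb{Z}^N$ and the $\mathbb{Z}^N$-periodicity of $F_\mathcal{P}(\cdot,t)$ force $\int F_\mathcal{P}(y-y_k^{(n)},\phi^{(n)}(y))\dy=\int F_\mathcal{P}(x,\phi^{(n)})\dx$; hence $\int F(y-y_k^{(n)},\phi^{(n)}(y))\dy\to\int F_\mathcal{P}(x,\phi^{(n)})\dx$. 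Letting the $\phi^{(n)}\to w^{(n)}$ in $H^s(\mathbb{R}^N)$ and applying \eqref{bem_def} once more for $f$ and, via \eqref{lim_infinito}, for $f_\mathcal{P}$, I obtain $\lim_k\int_{\mathbb{R}^N}F(x,S_k^M)\dx=\int_{\mathbb{R}^N}F(x,w^{(1)})\dx+\sum_{n=2}^M\int_{\mathbb{R}^N}F_\mathcal{P}(x,w^{(n)})\dx=:A_M$.

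The third move is to let $M\to\infty$. Since \eqref{seis.tres.sub} gives $\|w^{(n)}\|\to 0$, the growth bound \eqref{bem_def} for $f_\mathcal{P}$ shows $\big|\int F_\mathcal{P}(x,w^{(n)})\dx\big|\le C\|w^{(n)}\|^2$ for $n$ large, so $\sum_{n>1}\int F_\mathcal{P}(x,w^{(n)})\dx$ converges and $A_M\to A:=\int F(x,w^{(1)})\dx+\sum_{n>1}\int F_\mathcal{P}(x,w^{(n)})\dx$; combining this with the first move gives $\limsup_k\big|\int F(x,u_k)\dx-A\big|\le|A_M-A|+C\varepsilon+C_\varepsilon\eta_M(p_\varepsilon)$ for every $M$ and $\varepsilon$, and letting $M\to\infty$, then $\varepsilon\to 0$, yields the proposition. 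The step I expect to be the main obstacle is keeping the error estimates uniform in $k$ simultaneously with the truncation in $M$: one must reduce to finitely many profiles before invoking the asymptotic disjointness of supports — the translations $y_k^{(n)}$ may escape to infinity at widely different rates, so there is no global disjointness — and the $L^{p_\varepsilon}$-tail of the profile series must be controlled uniformly in $k$, which is precisely what the uniform convergence asserted in Theorem \ref{teo_tinta_frac_sub} provides.
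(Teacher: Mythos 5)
Your proposal is correct and follows essentially the same route as the paper's proof: uniform continuity of $u\mapsto\int F(x,u)\dx$ on $L^{p}$-bounded sets (your mean-value/H\"older estimate is exactly what underlies Proposition \ref{lemmaconverge}), reduction to finitely many profiles via the uniform convergence in \eqref{seis.quatro.sub}, density in $C_0^\infty$ plus asymptotic disjointness of supports from \eqref{seis.dois.sub}, and the asymptotic periodicity \eqref{lim_infinito} with $y_k^{(n)}\in\mathbb{Z}^N$ to replace $F$ by $F_\mathcal{P}$ for $n\ge 2$. Your third move, checking summability of $\sum_{n>1}\int F_\mathcal{P}(x,w^{(n)})\dx$ via \eqref{seis.tres.sub}, makes explicit a point the paper leaves implicit.
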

\begin{proof}
By the Proposition \ref{lemmaconverge} the functional $ \Phi (u) := \int _{\mathbb{R}^N} F(x,u)\dx,\quad u \in H^s (\mathbb{R}^N),$ is uniformly continuous in bounded sets of $L^p (\mathbb{R}^N),$ for any $2<p<2^{\ast } _s.$ Thus, by \eqref{seis.tres.sub} and \eqref{seis.quatro.sub},
\begin{equation*}
\lim _{k \rightarrow \infty} \left[ \Phi(u_k ) - \Phi \left(\sum _{n \in \mathbb{N} _0} w^{(n)} (\cdot - y_k ^{(n)} ) \right) \right] = 0.
\end{equation*}
By the uniform convergence in \eqref{seis.quatro.sub} we can reduce to the case $\mathbb{N}_0=\{ 1, \ldots, M\}.$ Thus taking $\Phi_\mathcal{P} (u) := \int_{\mathbb{R}^N} F _\mathcal{P}(x,u)\dx, \quad u \in H^s (\mathbb{R}^N),$ it follows from \eqref{lim_infinito} and Dominated Convergence Theorem that
\begin{equation*}
\lim_{k \rightarrow \infty} \left[ \sum _{n \in \mathbb{N} _0} \Phi \left(w^{(n)} (\cdot - y_k ^{(n)} ) \right)-\Phi (w^{(1)}) - \sum _{n \in \mathbb{N} _0 ,n>1} \Phi_\mathcal{P} (w^{(n)}) \right] = 0.
\end{equation*}
It remains to prove that
\begin{equation}\label{remains}
\lim _{k \rightarrow \infty} \left[ \Phi \left(\sum _{n \in \mathbb{N} _0} w^{(n)} (\cdot - y_k ^{(n)} ) \right) -\sum _{n \in \mathbb{N} _0} \Phi \left(w^{(n)} (\cdot - y_k ^{(n)}) \right)  \right] = 0.
\end{equation}
Since $\Phi$ is locally Lipschitz using a density argument, we can assume that $w^{(n)} \in C^\infty _0 (\mathbb{R}^N),$ for $n=1,\ldots,M.$ Consequently, from \eqref{seis.dois.sub}, 
\begin{equation*}
\supp (w^{(n)} (\cdot - y_k ^{(n)})) \cap  \supp (w^{(m)} (\cdot - y_k ^{(m)}) ) = \emptyset,\quad \text{ for }m \neq n\text{ and }k\text{ large enough,}
\end{equation*}
which implies that \eqref{remains}  holds, since for $k$ large enough,
\begin{align*}
\int _{\mathbb{R} ^N} F\left(x, \sum _{n \in \mathbb{N} _0} w^{(n)} (x - y_k ^{(n)} ) \right) \dx &=  \int _{ \bigcup _{n=1} ^M \supp (w^{(n)} (\cdot - y_k ^{(n)}) )} F \left(x, \sum _{m=1} ^M w^{(m)}(\cdot - y_k ^{(m)}) \right) \dx\\
&=\sum _{n=1} ^M \int _{\supp (w^{(n)} )} F(x+y_k ^{(n)}, w^{(n)} ) \dx \qedhere
\end{align*}
\end{proof}
\begin{corollary}\label{decomp_auto_lemma}
Let $(u_k)\subset H^s (\mathbb{R}^N) $ be a bounded sequence and $(w ^{(n)}) _{n \in \mathbb{N}_{0} }$ in $H^s(\mathbb{R}^N),$ given by Theorem \ref{teo_tinta_frac_sub}. If $f(x,t)$ is $\mathbb{Z}^N$--periodic and satisfies \eqref{bem_def},
\begin{equation}\label{lemaUM}
\lim _{k \rightarrow \infty}\int _{\mathbb{R}^N} F(x, u_k) \dx = \sum _{n \in \mathbb{N}_0} \int _{\mathbb{R} ^N} F (x,w^{(n)}) \dx.
\end{equation}
\end{corollary}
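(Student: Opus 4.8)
The plan is to argue exactly as in the proof of Proposition~\ref{comp1}, exploiting that the $\mathbb{Z}^N$--periodicity of $f(x,\cdot)$ makes the periodic asymptote coincide with $f$ itself, so that every profile — not only the one centered at the origin — contributes the full term $\int_{\mathbb{R}^N}F(x,w^{(n)})\dx$ and no passage to a limiting nonlinearity is needed. Concretely, set $\Phi(u):=\int_{\mathbb{R}^N}F(x,u)\dx$ for $u\in H^s(\mathbb{R}^N)$. By Proposition~\ref{lemmaconverge}, $\Phi$ is uniformly continuous on bounded subsets of $L^p(\mathbb{R}^N)$ for every $p\in(2,2_s^\ast)$; hence, by \eqref{seis.tres.sub} and the uniform-in-$k$ convergence of the series in \eqref{seis.quatro.sub}, it suffices to treat the case of finitely many profiles $\mathbb{N}_0=\{1,\dots,M\}$ and to show
\[
\lim_{k\to\infty}\Phi\Big(\sum_{n=1}^{M}w^{(n)}(\cdot-y_k^{(n)})\Big)=\sum_{n=1}^{M}\Phi\big(w^{(n)}\big).
\]

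For this I would first observe that, since $\Phi$ is locally Lipschitz on $L^{p}(\mathbb{R}^N)$ for a suitable $p\in(2,2_s^\ast)$, a density argument lets us assume $w^{(n)}\in C_0^\infty(\mathbb{R}^N)$ for $n=1,\dots,M$. Then \eqref{seis.dois.sub} forces the translated supports $\supp\big(w^{(n)}(\cdot-y_k^{(n)})\big)$ to be pairwise disjoint for all large $k$, whence
\[
\Phi\Big(\sum_{n=1}^{M}w^{(n)}(\cdot-y_k^{(n)})\Big)=\sum_{n=1}^{M}\int_{\supp(w^{(n)})}F\big(x+y_k^{(n)},w^{(n)}\big)\dx .
\]
Here the $\mathbb{Z}^N$--periodicity of $F(\cdot,t)$ together with $y_k^{(n)}\in\mathbb{Z}^N$ yields $\int_{\mathbb{R}^N}F(x+y_k^{(n)},w^{(n)})\dx=\int_{\mathbb{R}^N}F(x,w^{(n)})\dx=\Phi(w^{(n)})$ for \emph{every} $k$ — this is the only place where the periodicity hypothesis is used, and it is precisely the simplification over Proposition~\ref{comp1}, in which only the profile with $y_k^{(1)}=0$ keeps a fixed center while the others must be compared with $F_\mathcal{P}$. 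Combining the displays and then letting $M\to\infty$ gives \eqref{lemaUM}.

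The only genuinely delicate point is the reduction to finitely many profiles, i.e.\ bounding $\big|\Phi(u_k)-\sum_{n=1}^{M}\Phi(w^{(n)})\big|$ uniformly in $k$ by a quantity that is small for $M$ large. This is handled exactly as in Proposition~\ref{comp1}: the uniform-in-$k$ convergence of the series in \eqref{seis.quatro.sub} together with the growth condition \eqref{bem_def} estimates the $\Phi$--contribution of the tail in terms of the corresponding $L^p$--norms, while \eqref{seis.tres.sub} makes $\sum_{n>M}\|w^{(n)}\|^2\to0$ as $M\to\infty$. Since this estimate has already been carried out there, in the write-up I would simply invoke it rather than reproduce it, and the corollary follows.
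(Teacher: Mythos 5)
Your proposal is correct and follows essentially the same route as the paper: the corollary is obtained by running the proof of Proposition \ref{comp1} verbatim (uniform reduction to finitely many profiles, density, disjoint supports from \eqref{seis.dois.sub}), with the only change being that $\mathbb{Z}^N$--periodicity of $F(\cdot,t)$ and $y_k^{(n)}\in\mathbb{Z}^N$ give $F(x+y_k^{(n)},w^{(n)}(x))=F(x,w^{(n)}(x))$ pointwise, so no limiting nonlinearity $F_\mathcal{P}$ is needed for any profile. No gaps.
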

\begin{corollary}
Let $u_k \rightharpoonup u$ in $H^s (\mathbb{R}^N)$ and $F(x,t)$ as in Corollary \ref{decomp_auto_lemma} then, up to a subsequence,
\begin{equation*}
\lim _{k\rightarrow \infty}\int _{\mathbb{R}^N}  F(u_k) - F(u - u_k) - F(u)  \dx = 0.
\end{equation*}
\end{corollary}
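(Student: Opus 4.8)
The plan is to obtain this as a double application of Corollary~\ref{decomp_auto_lemma}: once to $(u_k)$ and once to the weakly null sequence $(u_k-u)$. We may assume $u\neq 0$, the statement being trivial otherwise. First I would pass to a subsequence along which Theorem~\ref{teo_tinta_frac_sub} holds for $(u_k)$, giving profiles $(w^{(n)})_{n\in\mathbb{N}_0}\subset H^s(\mathbb{R}^N)$ and translations $(y_k^{(n)})\subset\mathbb{Z}^N$; since $1\in\mathbb{N}_0$ and $y_k^{(1)}=0$, the first profile is the weak limit of $(u_k)$, hence $w^{(1)}=u$. Corollary~\ref{decomp_auto_lemma} then yields
\[
\lim_{k\to\infty}\int_{\mathbb{R}^N}F(x,u_k)\dx=\int_{\mathbb{R}^N}F(x,u)\dx+\sum_{n\in\mathbb{N}_0,\,n>1}\int_{\mathbb{R}^N}F(x,w^{(n)})\dx .
\]

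Next I would set $v_k:=u_k-u$, a bounded sequence in $H^s(\mathbb{R}^N)$ with $v_k\rightharpoonup 0$, and argue that, along the same subsequence, $\{0\}\cup\{w^{(n)}\}_{n>1}$ placed at $\{0\}\cup\{y_k^{(n)}\}_{n>1}$ is a profile decomposition of $(v_k)$ in the sense of Theorem~\ref{teo_tinta_frac_sub}. Indeed, since $y_k^{(1)}=0$, \eqref{seis.dois.sub} forces $|y_k^{(n)}|\to\infty$ for every $n>1$, so $u(\cdot+y_k^{(n)})\rightharpoonup 0$ by Lemma~\ref{eleminha} and hence $v_k(\cdot+y_k^{(n)})\rightharpoonup w^{(n)}$ by \eqref{seis.um.sub}; the asymptotic orthogonality \eqref{seis.dois.sub} is inherited verbatim and the summability $\sum_{n>1}\|w^{(n)}\|^2<\infty$ follows from \eqref{seis.tres.sub}; finally, using $w^{(1)}(\cdot-y_k^{(1)})=u$,
\[
v_k-\sum_{n>1}w^{(n)}(\cdot-y_k^{(n)})=u_k-\sum_{n\in\mathbb{N}_0}w^{(n)}(\cdot-y_k^{(n)})\longrightarrow 0\quad\text{in }L^p(\mathbb{R}^N)
\]
by \eqref{seis.quatro.sub}. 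By the uniqueness of profile decompositions up to a permutation of indices and the constant operator (recorded right after Theorem~\ref{teo_tinta_frac_sub}), this coincides, after relabeling, with the decomposition Theorem~\ref{teo_tinta_frac_sub} produces for $(v_k)$; so Corollary~\ref{decomp_auto_lemma} and $F(x,0)=0$ give
\[
\lim_{k\to\infty}\int_{\mathbb{R}^N}F(x,u_k-u)\dx=\sum_{n\in\mathbb{N}_0,\,n>1}\int_{\mathbb{R}^N}F(x,w^{(n)})\dx .
\]
Subtracting the two displayed limits yields $\lim_{k\to\infty}\int_{\mathbb{R}^N}\big(F(x,u_k)-F(x,u_k-u)-F(x,u)\big)\dx=0$.

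The only delicate point is the identification of the profile decomposition of $(v_k)$; I would settle it either by the cited uniqueness statement, or — to avoid uniqueness entirely — by simply re-running the proof of Proposition~\ref{comp1} with the explicit profiles above, since that proof uses only the $L^p$-smallness of the remainder, the asymptotic orthogonality, and $\sum\|w^{(n)}\|^2<\infty$, all of which have just been checked for $(v_k)$. Everything else is routine bookkeeping, so I do not expect a genuine obstacle. For completeness one can also bypass the profile decomposition: by \eqref{comp_loc} we may assume $u_k\to u$ a.e., whence $F(x,u_k)-F(x,u_k-u)-F(x,u)\to 0$ a.e.\ (by continuity of $F(x,\cdot)$ and $F(x,0)=0$), and a Brezis--Lieb type equi-integrability argument based on the growth bound \eqref{bem_def} and the boundedness of $(u_k)$ in $L^2(\mathbb{R}^N)\cap L^{2^{\ast}_s}(\mathbb{R}^N)$ closes the limit; but the twofold use of Corollary~\ref{decomp_auto_lemma} is the most economical route.
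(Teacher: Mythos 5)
Your proposal is correct and follows essentially the same route as the paper: the paper also applies Corollary \ref{decomp_auto_lemma} to $(u_k)$, observes $w^{(1)}=u$, re-runs the argument of Proposition \ref{comp1} on $u_k-u$ with the remaining profiles $w^{(n)}$, $n>1$, and subtracts the two limits. Your write-up merely makes explicit the verification (weak convergence of the shifted profiles, inherited orthogonality, $L^p$-smallness of the remainder) that the paper compresses into ``following the proof of Proposition \ref{comp1}.''
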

\begin{proof}
Since $w^{(1)} = u,$ following the proof of Proposition \ref{comp1}, we obtain
\begin{equation}\label{lemaDOIS}
\lim _{k\rightarrow \infty} \int _{\mathbb{R}^N} F(u_k - u) \dx = \sum _{n\in \mathbb{N}_\ast,n>1} \int _{\mathbb{R}^N} F(w^{(n)}) \dx.
\end{equation}
Taking the difference between \eqref{lemaUM} and \eqref{lemaDOIS} we get the desired convergence.
\end{proof}

The following result, is a generalization of Fatou Lemma, or alternatively, that the functional $u \mapsto \int_{\mathbb{R}^N}V(x)u^2\dx$ is sequentially weakly lower semiconti\-nuous with respect to the profile decomposition of Theorem \ref{teo_tinta_frac_sub}. Moreover, it is a complement to Proposition \ref{comp1}.
\begin{proposition}\label{p_l2}
Suppose that $a(x)\equiv V(x)\geq 0$ and that \eqref{V_sirakov} holds true. Let $(u_k)$ be a bounded sequence in $H^s (\mathbb{R} ^N)$ and $(w ^{(n)}) _{n \in \mathbb{N}_{0} }$ given in Theorem \ref{teo_tinta_frac_sub}.
\begin{enumerate}[label=(\roman*)]
\item If \eqref{V_pe} holds, we have
\begin{equation*}
\liminf _{k\rightarrow \infty}\int_{\mathbb{R}^N}V(x)u_k^2\dx \geq \sum _{n \in \mathbb{N}_0}\int _{\mathbb{R}^N} V(x) |w^{(n)}|^2\dx.
\end{equation*}
\item Under \eqref{V_as} we obtain,
\begin{equation*}
\liminf _{k\rightarrow \infty}\int_{\mathbb{R}^N}V(x)u_k^2\dx \geq \int_{\mathbb{R}^N}V(x)|w^{(1)}|^2\dx+\sum _{n \in \mathbb{N}_0,n>1 }\int _{\mathbb{R}^N} V_\infty |w^{(n)}|^2\dx.
\end{equation*}
\end{enumerate}
\end{proposition}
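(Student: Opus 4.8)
The plan is to localize the quantity $\int_{\mathbb{R}^N}V u_k^2\dx$ around each dislocation point $y_k^{(n)}$, to use $V\ge 0$ in order to bound it from below by a finite sum of integrals over pairwise disjoint balls, and then to pass to the limit term by term using Fatou's lemma — together with the $\mathbb{Z}^N$--periodicity of $V$ in case (i), and the existence of the limit $V_\infty$ in case (ii). No quantitative Sobolev pairing between $V$ and $u_k^2$ is needed, precisely because $V\ge 0$ makes Fatou's lemma available throughout.

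First I would dispose of the trivial case: if $L:=\liminf_k\int_{\mathbb{R}^N}Vu_k^2\dx=+\infty$ there is nothing to prove, so I assume $L<\infty$ and, after passing to a subsequence, that $\int_{\mathbb{R}^N}Vu_k^2\dx\to L$. By \eqref{seis.um.sub}, $u_k(\cdot+y_k^{(n)})\rightharpoonup w^{(n)}$ in $H^s(\mathbb{R}^N)$ for each $n\in\mathbb{N}_0$, so the local compact embedding recalled after \eqref{comp_loc} yields strong convergence of $u_k(\cdot+y_k^{(n)})$ to $w^{(n)}$ in $L^2(B_R)$ for every $R>0$; by a diagonal extraction over $n\in\mathbb{N}_0$ and $R\in\mathbb{N}$ I may assume in addition that $u_k(\cdot+y_k^{(n)})\to w^{(n)}$ a.e.\ in $\mathbb{R}^N$ for all $n\in\mathbb{N}_0$ (this is legitimate because the conclusions of Theorem \ref{teo_tinta_frac_sub}, including the weak limits and the separation \eqref{seis.dois.sub}, are stable under passing to further subsequences).

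Next, fix $M\in\mathbb{N}$ and $R>0$. Since $y_k^{(1)}=0$ and $|y_k^{(n)}-y_k^{(m)}|\to\infty$ for $m\neq n$ by \eqref{seis.dois.sub} (in particular $|y_k^{(n)}|\to\infty$ for $n\ge 2$), the balls $B_R(y_k^{(n)})$, $n=1,\dots,M$, are pairwise disjoint for all large $k$; using $V\ge 0$ and the change of variables $x\mapsto x+y_k^{(n)}$,
\begin{equation*}
\int_{\mathbb{R}^N}V u_k^2\dx\ \ge\ \sum_{n=1}^{M}\int_{B_R(y_k^{(n)})}V(x)u_k(x)^2\dx\ =\ \sum_{n=1}^{M}\int_{B_R}V(x+y_k^{(n)})\,u_k(x+y_k^{(n)})^2\dx.
\end{equation*}
In case (i), periodicity of $V$ and $y_k^{(n)}\in\mathbb{Z}^N$ give $V(x+y_k^{(n)})=V(x)$, and Fatou's lemma applied to the nonnegative integrands $V(x)u_k(x+y_k^{(n)})^2\to V(x)|w^{(n)}(x)|^2$ a.e.\ yields $\liminf_k\int_{B_R}V(x)u_k(x+y_k^{(n)})^2\dx\ge\int_{B_R}V|w^{(n)}|^2\dx$. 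In case (ii), for $n=1$ the same Fatou argument gives $\liminf_k\int_{B_R}Vu_k^2\dx\ge\int_{B_R}V|w^{(1)}|^2\dx$; for $n\ge 2$, since $|y_k^{(n)}|\to\infty$ and $V(x)\to V_\infty$ as $|x|\to\infty$, one has $V(\cdot+y_k^{(n)})\to V_\infty$ uniformly on $B_R$ (so that $V$ is in particular bounded on $B_R(y_k^{(n)})$ for large $k$), and writing $V(x+y_k^{(n)})=V_\infty+\bigl(V(x+y_k^{(n)})-V_\infty\bigr)$ and using $u_k(\cdot+y_k^{(n)})\to w^{(n)}$ in $L^2(B_R)$ one gets $\lim_k\int_{B_R}V(x+y_k^{(n)})u_k(x+y_k^{(n)})^2\dx=V_\infty\int_{B_R}|w^{(n)}|^2\dx$. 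Taking $\liminf_k$ in the displayed inequality (and using that the $\liminf$ of a finite sum is at least the sum of the $\liminf$'s) I obtain $L\ge\sum_{n=1}^{M}\int_{B_R}V|w^{(n)}|^2\dx$ in case (i), and $L\ge\int_{B_R}V|w^{(1)}|^2\dx+V_\infty\sum_{n=2}^{M}\int_{B_R}|w^{(n)}|^2\dx$ in case (ii). Letting $R\to\infty$ by monotone convergence, and then $M\to\infty$, gives the two asserted inequalities; note that finiteness of $\int_{\mathbb{R}^N}V|w^{(n)}|^2\dx$ need not be assumed, since it follows from $L<\infty$ through the Fatou step.

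I do not expect a serious obstacle: the argument is essentially elementary once one exploits $V\ge 0$. The points that require the most care are the simultaneous a.e.\ convergence of the countably many dislocated sequences $\bigl(u_k(\cdot+y_k^{(n)})\bigr)_k$, which is handled by the diagonal extraction above and is justified by the stability of the profile decomposition under subsequences, and, in case (ii), the observation that the pointwise limit $V(x)\to V_\infty$ at infinity self-improves to uniform convergence of $V(\cdot+y_k^{(n)})$ on each fixed ball $B_R$ once $|y_k^{(n)}|$ is large — which is also what guarantees that $V$ is genuinely bounded on those far-away balls despite being only assumed in $L^\sigma_{\loca}(\mathbb{R}^N)$.
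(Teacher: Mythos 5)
Your proof is correct, but it follows a genuinely different route from the paper's. The paper establishes the \emph{exact} asymptotic identity
\begin{equation*}
\int_{\mathbb{R}^N} V(x)u_k^2\dx=\Big\||V|^{1/2}(u_k-w^{(1)})-|V_\infty|^{1/2}\textstyle\sum_{n=2}^m w^{(n)}(\cdot-y_k^{(n)})\Big\|_{L^2}^2+\int_{\mathbb{R}^N}V|w^{(1)}|^2\dx+\sum_{n=2}^m V_\infty\|w^{(n)}\|_2^2+o(1)
\end{equation*}
by induction on $m$, following the iterated Brezis--Lieb lemma of Cwikel--Tintarev, and then simply discards the nonnegative square on the right; the inequality of the proposition is the by-product of a stronger structural statement. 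You instead localize: you restrict the integral to pairwise disjoint balls $B_R(y_k^{(n)})$ (disjointness coming from \eqref{seis.dois.sub}), use $V\ge0$ to drop the complement, translate, and pass to the limit ball by ball via Fatou (periodic case) or via the essentially uniform convergence $V(\cdot+y_k^{(n)})\to V_\infty$ on $B_R$ combined with strong $L^2_{\loca}$ convergence from \eqref{comp_loc} (asymptotic case), finishing with monotone convergence in $R$ and then $M\to\infty$. Your argument is more elementary (no Brezis--Lieb machinery, no induction) and exploits the sign of $V$ more directly, at the cost of yielding only the one-sided bound rather than the asymptotic expansion with explicit remainder; for the purposes of this proposition that is all that is needed. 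The points you flagged as delicate --- the diagonal extraction giving simultaneous a.e.\ convergence of all dislocated sequences, and the fact that the essential limit $V(x)\to V_\infty$ upgrades to essentially uniform convergence on far-away balls (so $V$ is essentially bounded there despite being only $L^\sigma_{\loca}$) --- are handled correctly, and the reduction to $\liminf_k\int V u_k^2<\infty$ plus superadditivity of $\liminf$ over finitely many nonnegative terms closes the argument.
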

\begin{proof}
We just prove the second inequality, the first one follows by a similar argument.
\begin{multline}\label{comp_l2}
\int_{\mathbb{R}^N} V(x) u^2_k\dx = \int _{\mathbb{R}^N} \left||V(x)|^{1/2} (u_k -w^{(1)} ) - |V_\infty|^{1/2} \sum _{n=2} ^m w^{(n)} (\cdot - y_k ^{(n)}) \right|^2\dx,\\
+ \int_{\mathbb{R}^N}V(x)|w^{(1)}|^2\dx+\sum _{n=2}^m\int _{\mathbb{R}^N} V_\infty |w^{(n)}|^2\dx+o(1),
\quad\forall \, m.
\end{multline}
where with the notation $a_k = o (b_k)$ we mean that $a_k / b_k \rightarrow 0.$ We proceed as in the proof of the iterated Brezis-Lieb Lemma \cite[Proposition 6.7]{tintacocompact}. We start by checking that \eqref{comp_l2} holds for $m=2.$ In fact, by Proposition \ref{welldef}, up to a subsequence, the classical Brezis-Lieb Lemma and assertion \eqref{seis.dois.sub} implies that
\begin{equation}\label{comp_l2_part1}
\int _{\mathbb{R}^N} V(x) u^2_k\dx =\int_{\mathbb{R}^N} V(x) |w^{(1)}|^2\dx +  \int_{\mathbb{R}^N}V(x) |u_k - w^{(1)}|^2 \dx+o(1),
\end{equation}
consequently and by the same reason,
\begin{multline}\label{comp_l2_part2}
\int_{\mathbb{R}^N} V(x) |u_k - w^{(1)}|^2 \dx = \\
\begin{aligned}
&\int _{\mathbb{R}^N} V(x+y_k ^{(2)}) |u_k (\cdot + y_k ^{(2)}) - w^{(1)} (\cdot + y_k ^{(2)}) | ^2 \dx\\
&+\int _{\mathbb{R}^N} \left||V(x+y_k ^{(2)})|^{1/2} \left(u_k (\cdot + y_k ^{(2)}) - w^{(1)}(\cdot + y_k ^{(2)}) \right) - |V_\infty w^{(2)}|^{1/2} \right|^2\dx\\
&+\int _{\mathbb{R}^N} V_\infty |w^{(2)}|^2 \dx +o(1).
\end{aligned}
\end{multline}
Replacing identity \eqref{comp_l2_part2} in \eqref{comp_l2_part1}, we obtain \eqref{comp_l2} for $m=2.$ We shall now prove that \eqref{comp_l2} holds for $m+1$ provided that it is true for $m.$ Indeed, arguing as above,
\begin{multline}\label{comp_l2_part3}
\int _{\mathbb{R}^N}\left||V(x)|^{1/2}(u_k - w^{(1)}) - V_\infty^{1/2}\sum _{n=2} ^m w^{(n)} (\cdot - y_k ^{(n)})\right|^2 \dx -\int _{\mathbb{R}^N} V_\infty |w^{(m+1)}| ^2 \dx\\= \int _{\mathbb{R}^N} \left||V(x)|^{1/2} \left(u_k  - w^{(1)} \right)  - V_\infty^{1/2} \sum _{n=2} ^{m+1} w^{(n)} (\cdot - y_k ^{(n)} ) \right|^2\dx + o(1).
\end{multline}
Applying the induction hypothesis in \eqref{comp_l2_part3} we obtain \eqref{comp_l2}.
\end{proof}
\subsection{Pohozaev Identity} We prove a Pohozaev type identity following the same argument of \cite[Section 4]{paper1} with some appropriated modifications. It complements some results in the present literature, namely: \cite[Theorem 2.3]{changnonexist}, \cite[Proposition 4.1]{chang-wang}, \cite[Proposition 4.3 ]{paper1} and \cite[Theorem 1.1]{rosoton}.
\begin{proposition}\label{pohozaev_id}
Assume $f(x,t)\equiv f(t)\in C^1 (\mathbb{R})$ and $a(x) \in C^1 (\mathbb{R}^N \setminus \mathcal{O}),$ where $\mathcal{O}$ is a finite set. Let $u\in \mathcal{D}^{s,2} (\mathbb{R}^N )$ be a weak solution of \eqref{P} such that $f(u)/(1+|u|)$ belongs to $L^{N/2s}_{\loca}(\mathbb{R}^N).$ If $F(u),$ $f(u)u,$ $a(x)u^2$ and $\left\langle \nabla a(x),x\right\rangle u^2$ belongs to $L^1(\mathbb{R}^N),$ then $u \in C^1 (\mathbb{R}^N \setminus \mathcal{O} )$ and
\begin{equation*}
\frac{N-2s}{2}\int _{\mathbb{R}^N} |(-\Delta)^{s/2} u|\dx +\frac{N}{2} \int _{\mathbb{R}^N} a(x) u^2 \dx + \frac{1}{2}\int _{\mathbb{R}^N} \left\langle \nabla a(x),x\right\rangle u^2 \dx = N \int _{\mathbb{R}^N} F(u) \dx.
\end{equation*}
\end{proposition}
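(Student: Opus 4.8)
The plan is to transfer the identity to the local degenerate problem solved by the $s$-harmonic extension of $u$, run a Rellich--Pohozaev computation there, and then let a truncation near the (possibly singular) set $\mathcal{O}\times\{0\}$ and the radius go to their limits; this follows the scheme of \cite[Section 4]{paper1} with the modifications forced by the potential $a(x)$. Write $g(x,t):=f(t)-a(x)t$, with primitive $G(x,t):=F(t)-\tfrac12 a(x)t^2$, and let $w=E_s(u)$. By the equivalence recorded right after \eqref{defi_fraca}, $w$ is a weak solution of $\dive(y^{1-2s}\nabla w)=0$ in $\mathbb{R}^{N+1}_+$ with $-\lim_{y\to0^+}y^{1-2s}w_y=\kappa_s g(x,w)$ on $\mathbb{R}^N$, and $\|\nabla w\|^2_{L^2(\mathbb{R}^{N+1}_+,y^{1-2s})}=\kappa_s[u]_s^2<\infty$ by \eqref{sol_div}.

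First I would prove the regularity claim. Away from $\mathcal{O}$ the equation for $u$ reads $(-\Delta)^s u=V_1(x)u+V_2(x)$ with $V_1:=f(u)/(1+|u|)-a(x)$ and $V_2:=f(u)/(1+|u|)$, both lying in $L^{N/2s}_{\loca}(\mathbb{R}^N\setminus\mathcal{O})$ by hypothesis and by $a\in C^1(\mathbb{R}^N\setminus\mathcal{O})$. A Brezis--Kato / Moser iteration (as in \cite[Section 6]{secchi}; see also \cite{silvestre,moustapha,frac_niremberg}) then gives $u\in L^q_{\loca}(\mathbb{R}^N\setminus\mathcal{O})$ for every $q<\infty$, hence $u\in C^{\alpha}_{\loca}(\mathbb{R}^N\setminus\mathcal{O})$, and bootstrapping with $f,a\in C^1$ upgrades this to $u\in C^1(\mathbb{R}^N\setminus\mathcal{O})$. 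Correspondingly $w\in C^\infty(\mathbb{R}^{N+1}_+)$, and away from $\mathcal{O}\times\{0\}$ the quantities $\nabla_x w$, $y^{1-2s}w_y$ and $yw_y$ extend continuously up to $\{y=0\}$ with $yw_y\to0$ there; this is exactly the regularity that legitimizes the integrations by parts below.

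Next, fix $R>0$ and use the notation $B_R^+,B_R^N,(\partial B_R)^+$ of \eqref{ext_distrib_sense}. On $B_R^+$ minus small half-balls around the points of $\mathcal{O}\times\{0\}$ (equivalently, against a cutoff vanishing near $\mathcal{O}\times\{0\}$ and equal to $1$ outside an $\varepsilon$-neighbourhood) I would test the extension equation with the dilation multiplier $z\cdot\nabla w$, $z=(x,y)$. Using the algebraic identities $\dive(y^{1-2s}z)=(N+2-2s)y^{1-2s}$ and $\nabla w\cdot\nabla(z\cdot\nabla w)=|\nabla w|^2+\tfrac12 z\cdot\nabla(|\nabla w|^2)$, integration by parts produces: the interior term $\tfrac{N-2s}{2}\int_{B_R^+}y^{1-2s}|\nabla w|^2\dxdy$; a spherical boundary contribution on $(\partial B_R)^+$ bounded by $CR\int_{(\partial B_R)^+}y^{1-2s}|\nabla w|^2$; the flat-boundary trace $\kappa_s\int_{B_R^N}g(x,u)\,(x\cdot\nabla u)\dx$ (here $yw_y\to0$ is used so that $z\cdot\nabla w\to x\cdot\nabla u$ on $\{y=0\}$); and extra boundary/remainder terms supported near $\mathcal{O}\times\{0\}$. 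Writing, away from $\mathcal{O}$, $g(x,u)(x\cdot\nabla u)=x\cdot\nabla_x(G(x,u))+\tfrac12\langle\nabla a(x),x\rangle u^2$ and integrating by parts on $B_R^N$ converts the flat-boundary trace into boundary terms on $\partial B_R^N$, a multiple of $\int_{B_R^N}G(x,u)\dx$, and $\tfrac12\int_{B_R^N}\langle\nabla a(x),x\rangle u^2\dx$, again modulo terms supported near $\mathcal{O}\times\{0\}$.

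Finally, letting the truncation parameter $\varepsilon\to0$ along a suitable sequence kills all the terms supported near $\mathcal{O}\times\{0\}$: the gradient remainders vanish because $y^{1-2s}|\nabla w|^2\in L^1(\mathbb{R}^{N+1}_+)$ (so by the coarea formula the integrals over the excised spheres are negligible along some $\varepsilon_n\to0$), and the potential/nonlinear remainders vanish because $G(\cdot,u)=F(u)-\tfrac12 a(\cdot)u^2\in L^1(\mathbb{R}^N)$ and $\langle\nabla a(\cdot),\cdot\rangle u^2\in L^1(\mathbb{R}^N)$; the hypotheses $F(u),f(u)u,a(x)u^2,\langle\nabla a(x),x\rangle u^2\in L^1$ are precisely what make every arising integral finite and every such limit legitimate. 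Then letting $R\to\infty$ along a sequence $R_n$ with $R_n\int_{(\partial B_{R_n})^+}y^{1-2s}|\nabla w|^2\to0$ and $R_n\int_{\partial B_{R_n}^N}|G(x,u)|\,d\sigma\to0$ (coarea again, using integrability of $y^{1-2s}|\nabla w|^2$ and of $G(\cdot,u)$) collapses the spherical-type boundary terms, makes the interior term converge to $\tfrac{N-2s}{2}\kappa_s[u]_s^2$, and makes $\int_{B_{R_n}^N}G(x,u)\dx\to\int_{\mathbb{R}^N}(F(u)-\tfrac12 a(x)u^2)\dx$, leaving $0=\tfrac{N-2s}{2}\kappa_s[u]_s^2+\kappa_s(-N\int_{\mathbb{R}^N}F(u)\dx+\tfrac{N}{2}\int_{\mathbb{R}^N}a(x)u^2\dx+\tfrac12\int_{\mathbb{R}^N}\langle\nabla a(x),x\rangle u^2\dx)$; dividing by $\kappa_s$ and using \eqref{planche_equiv} is the claimed identity. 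The main obstacle is carrying this argument through with a genuinely singular $a$: one must be sure the $C^1$ regularity off $\mathcal{O}$ is strong enough to justify each integration by parts, that the truncation remainders near $\mathcal{O}\times\{0\}$ really vanish (this is where the four $L^1$ hypotheses are indispensable, since $u$ and $\nabla w$ need not be bounded there), and that a single sequence of radii makes all boundary-at-infinity contributions negligible.
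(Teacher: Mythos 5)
Your proposal is correct and follows essentially the same route as the paper: $s$-harmonic extension, local regularity obtained by writing $f(u)-a(x)u$ as an $L^{N/2s}_{\loca}$ coefficient times $u$ plus an $L^{N/2s}_{\loca}$ term (the paper inserts a $\sgn(u)$ factor that your $V_1,V_2$ splitting omits, but this is cosmetic) together with the regularity theory of \cite{frac_niremberg}, and then the truncated Rellich--Pohozaev computation with the dilation multiplier, exactly as in \cite[Proposition 4.3]{paper1}. The only organizational difference is that the paper uses a single cutoff $\xi_n$ that simultaneously excises $1/n$-neighbourhoods of $\mathcal{O}\times\{0\}$ and truncates at radius $n$ with $|z||\nabla\xi_n(z)|\leq C$, whereas you run the two limits separately with coarea-selected radii; both are legitimate and rest on the same four $L^1$ hypotheses.
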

\begin{proof}
We first prove the local regularity of $u$. For $x_0 \in \mathbb{R}^N\setminus \mathcal{O},$ $\overline{u} = u(\cdot+x_0)$ is a weak solution of $(-\Delta)^{s} \overline{u} +\overline{a}(x) \overline{u}= f(\overline{u})\text{ in } \mathbb{R}^N,$ where $\overline{a}(x) = a(x+x_0).$ Taking $r$ small enough, the ball $B^N_r$ does not contains any point of discontinuity of $\overline{a}(x)$ and so
	\begin{equation*}
	\frac{|g(\overline{u})|}{1+|\overline{u}|}\in L^{N/2s} (B^N_r),\quad \text{where}\quad g(\overline{u}):=f(\overline{u}) - \overline{a}(x)\overline{u}.
	\end{equation*}
	This enables us to proceed as in \cite[Proposition 4.2]{paper1}, to conclude that $u \in L^p (B_r ^N),$ for all $p\geq1.$ Moreover, since
	\begin{equation*}
	g(\overline{u})=f(\overline{u})-\overline{a}(x)\overline{u}=\left[\frac{f(\overline{u})}{1+|\overline{u}|}\sgn(\overline{u})- \overline{a}(x) \right]\overline{u} + \frac{f(\overline{u})}{1+|\overline{u}|},
	\end{equation*}
we can use the results of \cite{frac_niremberg} (see also \cite[Proposition 4.1]{paper1}) to conclude that there exist  $0<y_0,\ r_0<r$ with $B ^N _r \times [0,y_0] \subset B ^+ _r,$ and $\alpha \in (0,1),$ such that $\overline{w},\ \nabla _x \overline{w},\ y^{1-2s}\overline{w}_y \in C^{0, \alpha} (B ^N _{r_0} \times [0, y_0 ]),$ where $\overline{w}$ is the $s$-harmonic extension of $\overline{u}$ and $\nabla _x \overline{w} = (\overline{w}_{x_1} , \ldots , \overline{w}_{x_N}).$ Since $x_0$ is arbitrary,
	$
	w,\ \nabla_xw,\ y^{1-2s} w_y \in C(B ^N _{r}\setminus \mathcal{O} \times [0, y_0 ]  ),\quad \forall \,  r,y_0>0.
	$
	Consider $\xi \in C_0 ^\infty (\mathbb{R}:[0,1])$ such that $\xi(t)= 1,$ if $|t| \leq 1,$ $\xi(t)= 0,$ if $|t| \geq 2,$ and $|\xi ' (t)| \leq C,$ $\forall \, t \in \mathbb{R}$ for $C>0.$ Let $\mathcal{O}=\{x^{(1)},\ldots,x^{(l)} \},$ and $z^{(i)} = (x^{(i)},0),\ i=1,\ldots,l.$ For each $n=1,\ldots,$ define $\xi _n : \mathbb{R}^{N+1} \rightarrow \mathbb{R}$ by
	\begin{align*}
	\xi_n (z)=
	\left\{
	\begin{aligned}
	&\xi(|z|^2/n^2), \quad&\text{if }|z-z^{(i)}|^2 > 2/n^2,\\
	&1-\xi (n^2 |z-z^{(i)}|^2),\quad&\text{if }  |z-z^{(i)}|^2 \leq 2/n^2.
	\end{aligned}
	\right.
	\end{align*}
	Then, for $n$ large enough, $\xi _n \in C_0 ^\infty (\mathbb{R}^N)$ and verifies $|z||\nabla \xi _n (z)| \leq C,$ $\forall \, z\in \mathbb{R}^{N+1},$ for some $C>0.$ From now on the proof follows the same arguments of \cite[Proposition 4.3 ]{paper1}.\end{proof}
\begin{remark}
In previous the proof we have applied \cite[Theorem 2.15]{frac_niremberg} and for that it was crucial that $a(x)$ is a continuously differential function in $\mathbb{R}^N \setminus \mathcal{O}.$
\end{remark}
\begin{corollary}\label{pohozaev_standard}
Assume \eqref{bem_def} and $f(x,t) = f(t)\in C^1 (\mathbb{R}).$ Moreover, that $a(x)\equiv a_0,$ where $a_0$ is a positive constant. If $u \in H^s (\mathbb{R}^N)$ is a weak solution for \eqref{P}, then
\begin{equation*}
\int_{\mathbb{R}^N}F(u) -\frac{a_0}{2}u^2\dx = \frac{N-2s}{2N} \int_\mathbb{R^N} |(-\Delta )^{s/2}u|^2\dx
\end{equation*}
\end{corollary}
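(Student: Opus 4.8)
The plan is to obtain the identity as a direct specialization of Proposition \ref{pohozaev_id} to the constant potential $a(x)\equiv a_0$. With this choice $\mathcal{O}=\emptyset$, $a\in C^1(\mathbb{R}^N)$, and $\nabla a\equiv 0$, so that $\langle\nabla a(x),x\rangle u^2\equiv 0$ and the third term on the left-hand side of the identity in Proposition \ref{pohozaev_id} disappears. It then suffices to check that the remaining hypotheses of that proposition are met for the present data; the conclusion will follow by dividing the resulting identity by $N$ and rearranging.

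First I would dispose of the structural hypotheses. That $f(x,t)\equiv f(t)\in C^1(\mathbb{R})$ is assumed, and $a\equiv a_0$ is trivially of class $C^1$ on all of $\mathbb{R}^N$ (so $\mathcal{O}=\emptyset$). Since $0<s<N/2$, the Sobolev embedding $H^s(\mathbb{R}^N)\hookrightarrow L^{2_s^\ast}(\mathbb{R}^N)$ from \eqref{Hembd} together with $(-\Delta)^{s/2}u\in L^2(\mathbb{R}^N)$ shows, via the characterization of $\mathcal{D}^{s,2}(\mathbb{R}^N)$ recalled in Section \ref{s_Preliminaries}, that $u\in\mathcal{D}^{s,2}(\mathbb{R}^N)$; moreover a weak solution of \eqref{P} in $H^s(\mathbb{R}^N)$ is a weak solution in the $\mathcal{D}^{s,2}(\mathbb{R}^N)$ sense because $C_0^\infty(\mathbb{R}^N)$ is dense in both spaces.

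Next I would verify the integrability conditions, all of which reduce to \eqref{bem_def} and the embeddings $H^s(\mathbb{R}^N)\hookrightarrow L^2(\mathbb{R}^N)\cap L^{p}(\mathbb{R}^N)\cap L^{2_s^\ast}(\mathbb{R}^N)$. Fixing $\varepsilon$ in \eqref{bem_def} and integrating gives $|F(t)|+|f(t)t|\leq C(|t|^2+|t|^{p}+|t|^{2_s^\ast})$ for some $p\in(2,2_s^\ast)$, so $F(u),f(u)u\in L^1(\mathbb{R}^N)$; also $a_0u^2\in L^1(\mathbb{R}^N)$ since $u\in L^2(\mathbb{R}^N)$, and $\langle\nabla a,x\rangle u^2\equiv 0\in L^1(\mathbb{R}^N)$. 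For the local requirement $f(u)/(1+|u|)\in L^{N/2s}_{\loca}(\mathbb{R}^N)$, note that \eqref{bem_def} yields $|f(t)|/(1+|t|)\leq C(1+|t|^{2_s^\ast-2})$ for all $t\in\mathbb{R}$ (the quotient is bounded near $0$ and $\lesssim|t|^{2_s^\ast-2}$ for large $|t|$), while the elementary identity $(2_s^\ast-2)\cdot N/(2s)=2_s^\ast$ combined with $u\in L^{2_s^\ast}_{\loca}(\mathbb{R}^N)$ gives $|u|^{2_s^\ast-2}\in L^{N/2s}_{\loca}(\mathbb{R}^N)$, hence the claim.

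With all hypotheses verified, Proposition \ref{pohozaev_id} gives
\begin{equation*}
\frac{N-2s}{2}\int_{\mathbb{R}^N}|(-\Delta)^{s/2}u|^2\dx+\frac{N}{2}\,a_0\int_{\mathbb{R}^N}u^2\dx=N\int_{\mathbb{R}^N}F(u)\dx,
\end{equation*}
and dividing by $N$ and transferring the middle term to the right produces exactly the stated identity. The only step requiring genuine attention is the local integrability $f(u)/(1+|u|)\in L^{N/2s}_{\loca}(\mathbb{R}^N)$, which hinges on the exponent computation $(2_s^\ast-2)N/(2s)=2_s^\ast$; everything else is specialization and routine bookkeeping.
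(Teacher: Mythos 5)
Your proposal is correct and is exactly the argument the paper intends: the corollary is stated without proof as a direct specialization of Proposition \ref{pohozaev_id} to $a(x)\equiv a_0$ (so $\mathcal{O}=\emptyset$ and the $\langle\nabla a,x\rangle$ term vanishes), and your verification of the integrability hypotheses — in particular the exponent computation $(2_s^\ast-2)N/(2s)=2_s^\ast$ giving $f(u)/(1+|u|)\in L^{N/2s}_{\loca}(\mathbb{R}^N)$, and the inclusion $H^s(\mathbb{R}^N)\subset\mathcal{D}^{s,2}(\mathbb{R}^N)$ — is sound.
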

\begin{corollary}\label{cor_pohozaev_hardy}
Assume \eqref{bem_def_ast} and that $f(x,t) = f(t)\in C^1 (\mathbb{R}).$ If $u \in \mathcal{D}^{s,2} (\mathbb{R}^N)$ is a weak solution for \eqref{P}, then for $0<\lambda <\Lambda_{N,s}$ given by \eqref{par_hardy},
\begin{equation*}
\int_{\mathbb{R}^N}|(-\Delta )^{s/2}u|^2-\lambda |x|^{-2s}u^2\dx = \frac{2N}{N-2s} \int_\mathbb{R^N}F(u) \dx.
\end{equation*}
\end{corollary}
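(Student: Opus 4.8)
The plan is to obtain Corollary~\ref{cor_pohozaev_hardy} as the special case $a(x)=-\lambda|x|^{-2s}$ of Proposition~\ref{pohozaev_id}. This potential is of class $C^1$ on $\mathbb{R}^N\setminus\{0\}$, so $\mathcal{O}=\{0\}$ is a finite set and the proposition is applicable in principle; in particular the substantive analytic content (the $s$-harmonic extension, the truncation near the singular point, and the limiting argument) is already supplied by Proposition~\ref{pohozaev_id}. Only two steps remain: verifying the integrability hypotheses of that proposition, and performing the elementary algebraic simplification permitted by the homogeneity of $|x|^{-2s}$.

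First I would record the structural identity coming from homogeneity: with $a(x)=-\lambda|x|^{-2s}$ one has $\nabla a(x)=2s\lambda|x|^{-2s-2}x$, hence $\langle\nabla a(x),x\rangle=2s\lambda|x|^{-2s}=-2s\,a(x)$, which is just the Euler relation for the $(-2s)$-homogeneous function $|x|^{-2s}$. Next I would check the hypotheses of Proposition~\ref{pohozaev_id}. From \eqref{bem_def_ast} we have $|f(t)|\le C|t|^{2^\ast_s-1}$, so $|f(u)|/(1+|u|)\le C|u|^{2^\ast_s-2}$; since $u\in L^{2^\ast_s}(\mathbb{R}^N)$ and $2^\ast_s/(2^\ast_s-2)=N/2s$, this lies in $L^{N/2s}(\mathbb{R}^N)\subset L^{N/2s}_{\loca}(\mathbb{R}^N)$. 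The same growth bound gives $|f(u)u|,|F(u)|\le C|u|^{2^\ast_s}\in L^1(\mathbb{R}^N)$. For the two terms involving the potential, the Hardy--Sobolev inequality \eqref{par_hardy} (extended from $C^\infty_0(\mathbb{R}^N)$ to $\mathcal{D}^{s,2}(\mathbb{R}^N)$ by density) gives $\int_{\mathbb{R}^N}|x|^{-2s}u^2\dx\le\Lambda_{N,s}^{-1}[u]_s^2<\infty$, so both $a(x)u^2$ and $\langle\nabla a(x),x\rangle u^2=-2s\,a(x)u^2$ belong to $L^1(\mathbb{R}^N)$, and Proposition~\ref{pohozaev_id} applies.

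With the hypotheses verified, substituting $\langle\nabla a(x),x\rangle u^2=-2s\,a(x)u^2$ into the identity of Proposition~\ref{pohozaev_id} collapses its last two left-hand terms, giving
\begin{equation*}
\frac{N-2s}{2}\int_{\mathbb{R}^N}|(-\Delta)^{s/2}u|^2\dx+\Big(\frac{N}{2}-s\Big)\int_{\mathbb{R}^N}a(x)u^2\dx=N\int_{\mathbb{R}^N}F(u)\dx;
\end{equation*}
since $\tfrac{N}{2}-s=\tfrac{N-2s}{2}$ and $a(x)=-\lambda|x|^{-2s}$, the left side is $\tfrac{N-2s}{2}\int_{\mathbb{R}^N}(|(-\Delta)^{s/2}u|^2-\lambda|x|^{-2s}u^2)\dx$, and dividing by $\tfrac{N-2s}{2}$ yields the stated identity. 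I expect the only point requiring care to be the hypothesis check in the second step — confirming that the single growth condition \eqref{bem_def_ast}, together with the Hardy--Sobolev inequality, genuinely delivers all four integrability requirements of Proposition~\ref{pohozaev_id}; once those are in force, the remainder is pure algebra.
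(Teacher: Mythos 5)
Your proposal is correct and is exactly the route the paper intends: the corollary is stated as a direct consequence of Proposition~\ref{pohozaev_id} with $a(x)=-\lambda|x|^{-2s}$, using the Euler relation $\langle\nabla a(x),x\rangle=-2s\,a(x)$ and the growth bound \eqref{bem_def_ast} together with the Hardy--Sobolev inequality \eqref{par_hardy} to verify the integrability hypotheses. Your verification that $|f(u)|/(1+|u|)\le C|u|^{2^\ast_s-2}\in L^{N/2s}(\mathbb{R}^N)$ and the final algebraic simplification are both accurate.
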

Next we have non-existence results, complementing the discussions made in \cite{mouhamednonexist}.
\begin{corollary} Assume $f(x,t)\equiv f(t)\in C^1(\mathbb{R}^N)$ and either one of the conditions are satisfied,
\begin{enumerate}[label=(\roman*)]
\item\label{COR_UM} $a(x) \in C^1 (\mathbb{R}^N \setminus \mathcal{O}),$ where $\mathcal{O}$ is a finite set, $2s a(x)+\left\langle \nabla a(x), x\right\rangle > 0$ for all $x$ in a non-zero measure domain and $2_s ^\ast F(t) \leq  f(t)t,$ for all $t\in \mathbb{R};$ or
\item\label{COR_DOIS} $a(x) \in C^1 (\mathbb{R}^N \setminus \mathcal{O}),$ where $\mathcal{O}$ is a finite set, $a(x)> 0,$ $\left\langle \nabla a(x),x\right\rangle > 0$ for all $x$ in a non-zero measure domain and there exists $0<\delta \leq 2,$ such that $\delta F(t) \geq f(t)t,$ for all $t\in \mathbb{R};$ or
\item\label{COR_TRES} $a(x) \equiv a_0>0$ constant and there exists $0\leq\delta \leq 2s/(N-2s),$ in a such way that $2_s ^\ast F(t) \leq f(t)t+\delta a_0 t^2,$  for all $t\in \mathbb{R};$
\item\label{COR_QUATRO} $a(x)\equiv 0$ and there exists $0<p<2_s^\ast$ such that $pF(t) \geq f(t)t$ for all $t\in \mathbb{R}.$
\end{enumerate}
If $u\in H^s (\mathbb{R}^N )$ is a weak solution of Eq. \eqref{P}, such that $F(u),$ $f(u)u,$ $a(x)u^2,$ $\left\langle \nabla a(x),x\right\rangle u^2$ belongs to $L^1(\mathbb{R}^N) $ and $f(u)/(1+|u|)$ belongs to $L^{N/2s}_{\loca}(\mathbb{R}^N),$ then $u\equiv0.$ 
\end{corollary}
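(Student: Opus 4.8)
The plan is to play two integral identities satisfied by a solution against each other. Let $u\in H^s(\mathbb{R}^N)$ be a weak solution of \eqref{P} with the integrability properties in the statement; I want to show $u\equiv0$. First I would record the \emph{Nehari identity}: since $u$ solves \eqref{P} and $f(u)u,\,a(x)u^2\in L^1(\mathbb{R}^N)$, one may use $u$ itself as a test function (after the cut-off/density approximation of the type used in Remark \ref{r_nonnegative}), obtaining
\[
\int_{\mathbb{R}^N}|(-\Delta)^{s/2}u|^2\dx+\int_{\mathbb{R}^N}a(x)u^2\dx=\int_{\mathbb{R}^N}f(u)u\dx .
\]
Second, I would invoke the \emph{Pohozaev identity}: in cases \ref{COR_UM} and \ref{COR_DOIS} the hypotheses are exactly those of Proposition \ref{pohozaev_id}; in case \ref{COR_TRES} one uses Corollary \ref{pohozaev_standard} (with $a\equiv a_0$); and in case \ref{COR_QUATRO} one uses Proposition \ref{pohozaev_id} with $a\equiv 0$. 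In every case this gives
\[
\frac{N-2s}{2}\int_{\mathbb{R}^N}|(-\Delta)^{s/2}u|^2\dx+\frac{N}{2}\int_{\mathbb{R}^N}a(x)u^2\dx+\frac{1}{2}\int_{\mathbb{R}^N}\langle\nabla a(x),x\rangle u^2\dx=N\int_{\mathbb{R}^N}F(u)\dx .
\]

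Next I would eliminate $\int_{\mathbb{R}^N}|(-\Delta)^{s/2}u|^2\dx$ between these two relations; using $2N=(N-2s)\,2^\ast_s$ this yields the master identity
\[
\int_{\mathbb{R}^N}\bigl(2s\,a(x)+\langle\nabla a(x),x\rangle\bigr)u^2\dx=(N-2s)\Bigl(2^\ast_s\int_{\mathbb{R}^N}F(u)\dx-\int_{\mathbb{R}^N}f(u)u\dx\Bigr),
\]
while eliminating $\int_{\mathbb{R}^N}a(x)u^2\dx$ instead gives the companion identity $\int_{\mathbb{R}^N}\langle\nabla a(x),x\rangle u^2\dx=2s\int_{\mathbb{R}^N}|(-\Delta)^{s/2}u|^2\dx+N\bigl(2\int_{\mathbb{R}^N}F(u)\dx-\int_{\mathbb{R}^N}f(u)u\dx\bigr)$, which is the convenient form when the growth condition is phrased with the constant $2$ rather than $2^\ast_s$. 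The conclusion then follows by matching signs. In \ref{COR_UM}: $2^\ast_s F(t)\le f(t)t$ makes the right-hand side of the master identity $\le0$, while $2s\,a(x)+\langle\nabla a(x),x\rangle\ge0$ makes its left-hand side $\ge0$, so both sides vanish; since the weight is positive on a set of positive measure, $u$ must vanish there, and then $u\equiv0$. In \ref{COR_TRES}, where $\langle\nabla a,x\rangle\equiv0$, the master identity reads $2s\,a_0\int u^2\dx=(N-2s)\bigl(2^\ast_s\int F(u)\dx-\int f(u)u\dx\bigr)$, and $2^\ast_s F(t)\le f(t)t+\delta a_0t^2$ bounds its right-hand side by $(N-2s)\,\delta a_0\int u^2\dx$, so $(2s-(N-2s)\delta)\,a_0\int u^2\dx\le0$; since $\delta\le 2s/(N-2s)$ this forces $u\equiv0$. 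In \ref{COR_QUATRO}, $a\equiv0$ kills the left-hand side, so $\int f(u)u\dx=2^\ast_s\int F(u)\dx$; combined with $f(t)t\le pF(t)$ and $p<2^\ast_s$ this gives $\int F(u)\dx\le0$, whereas the Pohozaev identity gives $\int F(u)\dx=\tfrac{N-2s}{2N}\int|(-\Delta)^{s/2}u|^2\dx\ge0$, hence $u\equiv0$. Case \ref{COR_DOIS} I would treat analogously, combining the Pohozaev and Nehari identities (note that $f(t)t\le\delta F(t)$ with $\delta\le2$ and Nehari already force $\int F(u)\dx>0$ once $u\not\equiv0$) with the sign condition on $\langle\nabla a(x),x\rangle$ to reach a contradiction.

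The hard part will not be the elimination algebra, which is routine, but justifying the two identities. One must verify that the integrability hypotheses in the statement are exactly enough both to make $u$ an admissible test function (for Nehari) and to meet the hypotheses of Proposition \ref{pohozaev_id}: this is where the $C^1$ regularity of $a$ on $\mathbb{R}^N\setminus\mathcal{O}$, the local bound $f(u)/(1+|u|)\in L^{N/2s}_{\loca}(\mathbb{R}^N)$, and the memberships $F(u),\,f(u)u,\,a(x)u^2,\,\langle\nabla a,x\rangle u^2\in L^1(\mathbb{R}^N)$ all get consumed, and it is also what delivers the local regularity $u\in C^1(\mathbb{R}^N\setminus\mathcal{O})$ needed to pass from ``$u$ vanishes on a set of positive measure'' to ``$u\equiv0$'' in case \ref{COR_UM}. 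Once both identities are secured, the remainder is bookkeeping.
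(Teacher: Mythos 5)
Your proposal is correct and follows essentially the same route as the paper: the paper likewise combines the Pohozaev identity of Proposition \ref{pohozaev_id} (resp.\ Corollary \ref{pohozaev_standard}) with the Nehari identity $I'(u)\cdot u=0$ and the growth hypotheses to force the relevant integrals to vanish in each of the four cases. The only difference is cosmetic — you package the elimination once as a ``master identity'' while the paper redoes the algebra case by case — and your remarks on where the integrability hypotheses are consumed match the paper's use of them.
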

\begin{proof}
\ref{COR_UM} Applying Proposition \ref{pohozaev_id}, we get
\begin{equation*}
\int_{\mathbb{R}^N} |(-\Delta )^{s/2}u|^2\dx+\frac{N}{N-2s}\int_{\mathbb{R}^N}a(x)u^2 \dx + \frac{1}{N-2s} \int _{\mathbb{R}^N}\left\langle \nabla a(x) , x\right\rangle u^2 \dx \leq \int _{\mathbb{R}^N} f(u)u\dx.
\end{equation*}
Using that $I'(u)\cdot u = 0,$ we obtain $u\equiv0,$ since
\begin{equation*}
\int_{\mathbb{R}^N} (2s a(x) + \left\langle \nabla a(x) , x\right\rangle )u^2 \dx \leq 0.
\end{equation*}
\ref{COR_DOIS} Using again Proposition \ref{pohozaev_id} we obtain that
\begin{equation*}
\frac{N-2s}{2N}\delta\int_{\mathbb{R}^N} |(-\Delta )^{s/2}u|^2\dx +\frac{\delta}{2} \int _{\mathbb{R}^N} a(x) u^2 \dx + \frac{\delta}{2N}\int _{\mathbb{R}^N} \left\langle \nabla a(x),x\right\rangle u^2 \dx \geq \int _{\mathbb{R}^N} f(u)u \dx,
\end{equation*}
which implies that
\begin{equation*}
\left(1 - \frac{N-2s}{2N} \delta \right)\int_{\mathbb{R}^N} |(-\Delta )^{s/2}u|^2\dx + \left(1-\frac{\delta}{2}\right)\int_{\mathbb{R}^N}a(x) u^2 \dx - \frac{\delta}{2N}\int_{\mathbb{R}^N}\left\langle \nabla a(x) , x\right\rangle  u^2 \dx \leq 0,
\end{equation*}
from this we get $u\equiv0.$

\ref{COR_TRES} Once more we can use Proposition \ref{pohozaev_id} to get
\begin{equation*}
\int_{\mathbb{R}^N} |(-\Delta )^{s/2}u|^2\dx+\frac{N}{N-2s}a_0\int_{\mathbb{R}^N}u^2 \dx \geq \int _{\mathbb{R}^N} f(u)u\dx,
\end{equation*}
which yields
\begin{equation*}
\left[ \frac{N-(1+\delta)(N-2s)}{N-2s} \right]  a_0 \int_{\mathbb{R}^N} u^2\dx \leq 0.
\end{equation*}
In particular $u\equiv0.$

\ref{COR_QUATRO} Proposition \ref{pohozaev_id} implies that $u\equiv0,$ since
\begin{equation*}
\int_{\mathbb{R}^N} |(-\Delta )^{s/2}u|^2\dx = 2_s ^\ast \int _{\mathbb{R}^N} F(u) \dx \geq \frac{2_s ^\ast}{p} \int _{\mathbb{R}^N} f(u)u \dx = \frac{2_s ^\ast}{p} \int_{\mathbb{R}^N} |(-\Delta )^{s/2}u|^2\dx.\qedhere
\end{equation*}
\end{proof}
\section{Proof of Theorem \ref{teoremao} }\label{s_proof_GS}
\begin{proof}
\ref{GS_UM} We use Theorem \ref{teo_tinta_frac_sub} which makes our argument easier then the one of \cite[Theorem 2.1]{reinaldo}. By Proposition \ref{p_psbounded}, there exists a bounded sequence $(u_k)$ such that $I(u_k) \rightarrow c(I)$ and $I'(u_k) \rightarrow 0.$ Using the profile decomposition provided by Theorem \ref{teo_tinta_frac_sub}, if we have $w^{(n)}= 0$ for all $n\in \mathbb{N}_0,$ then by assertion \eqref{seis.quatro.sub}, $u_k \rightarrow 0$ in $L^{p} (\mathbb{R}^N),$ for any $2<p<2_s ^{\ast }$ and by \eqref{seis.um.sub}, $u_k \rightharpoonup 0$ in $H_{V}^s(\mathbb{R}^N),$ up to a subsequence. Thus, by Proposition \ref{lemmaconverge},
\begin{equation}\label{eq_GS_primeira}
	\begin{aligned}
	& o(1)+c(I)=I(u_k) = \frac{1}{2}\| u_k\|^2 _{V} - \int_{\mathbb{R}^N} F(x,u_k) \dx = \frac{1}{2}\| u_k\|^2 _{V} + o(1),  \\
	& o(1)=I'(u_k)\cdot u_k = \|u_k \|^2 _{V} - \int _{\mathbb{R}^N} f(x,u_k) u_k \dx = \|u_k \|^2 _{V} + o(1),
	\end{aligned}
\end{equation}
which is a contradiction, with $c(I)>0.$ Thus, there must be at least one nonzero $w^{(n)}.$ Moreover, we have that each $w^{(n)}$ is a critical point of $I.$ In fact, up to a subsequence, we can take  $h ^{(n)}\in L ^{\sigma '} (\supp (\varphi)),$ $n\in \mathbb{N} _0,$ such that
\begin{equation}\label{dominada}
|u_k (x + y_k ^{(n)}) | \leq h ^{(n)}(x), \quad\text{a.e. }x \in \supp (\varphi),
\end{equation}
where $\sigma ' = \sigma / (\sigma -1)$ and $\varphi\in C^\infty _0 (\mathbb{R}^N),$ which can be done thanks to Proposition \ref{welldef}. Thus 
\begin{equation*}
	\begin{aligned}
	&|V(x + y_k ^{(n)}) u_k (x + y_k ^{(n)}) \varphi (x)|= |V(x) u_k (x + y_k ^{(n)}) \varphi (x)| \leq h ^{(n)}(x) |V(x) \varphi(x)| \in L ^1 (\supp (\varphi))  \\
	&V(x + y_k ^{(n)}) u_k (x + y_k ^{(n)}) \varphi (x) =V(x) u_k (x + y_k ^{(n)}) \varphi (x) \rightarrow V (x) w^{(n)} (x)\varphi (x), \quad\text{a.e. in } \mathbb{R}^N,
	\end{aligned}
\end{equation*}
which together with by the Dominated Convergence Theorem implies
\begin{align*}
\lim _{k \rightarrow \infty }(u_k ,\varphi (\cdot - y_k ^{(n)}) ) _{V} &= \lim_{k \rightarrow \infty} \left[  [u_k (\cdot + y_k ^{(n)} ), \varphi]_s + \int_{\mathbb{R}^N} V(x +y_k ^{(n)}) u_k (\cdot  +y_k ^{(n)}) \varphi (x) \dx \right]  \nonumber \\
&=[w^{(n)}, \varphi]_s + \int_{\mathbb{R}^N} V(x)w^{(n)}\varphi \dx. 
\end{align*}
By the same reason and \eqref{bem_def}, up to a subsequence we have,
\begin{equation*}
\lim _{k \rightarrow \infty} \int _{\mathbb{R}^N} f(x + y_k ^{(n)} , u_k (\cdot + y_k ^{(n)} ) ) \varphi \dx = \int _{\mathbb{R}^N} f(x, w^{(n)} )\varphi \dx.
\end{equation*}
Consequently we may pass the limit in
\begin{equation*}
I'(u_k)\cdot \varphi (\cdot - y_k ^{(n)}) = (u_k ,\varphi (\cdot - y_k ^{(n)}) ) _{V} - \int _{\mathbb{R}^N} f(x + y_k ^{(n)} , u_k (\cdot + y_k ^{(n)} ) ) \varphi \dx,
\end{equation*}
to conclude that $I'(w^{(n)})=0,$ for all $n\in \mathbb{N}_0$. In particular, we get that $\mathcal{G}_{\mathcal{S}} := \inf \{  I(u) : u\in H^s_{V}(\mathbb{R}^N)\setminus \{0 \},\ I'(u) = 0 \},$ is nonnegative. We are going to prove that is $\mathcal{G}_{\mathcal{S}}$ is attained and is positive. Let $(u_k)$ be a minimizing sequence for $\mathcal{G}_{\mathcal{S}},$ that is $I(u_k) \rightarrow \mathcal{G}_{\mathcal{S}}$ and $I'(u_k)=0.$ Arguing as in Proposition \ref{p_psbounded} we obtain that $(u_k)$ is bounded. Suppose by contradiction that $w^{(n)}=0$ for all $n\in \mathbb{N}_0.$ In this case we have $\mathcal{G}_{\mathcal{S}}>0,$ otherwise, if $\mathcal{G}_{\mathcal{S}}=0,$ then using \eqref{eq_GS_primeira} we would conclude that $\|u_k\|_{V} = o(1),$ and at the same time,
\begin{equation*}
\|u_k\|^2_{V} = \int _{\mathbb{R}^N}f(u_k)u_k \dx\leq \varepsilon(C_2 \|u_k\|^2_{V} + C_\ast\|u_k\|^{2_s ^\ast}_{V} ) + C_\varepsilon \|u_k\|^{p_\varepsilon}_{V},
\end{equation*}
where $\mathcal{C}_2,\ \mathcal{C}_{2_s^\ast},\ \mathcal{C}_{p_\varepsilon }>0$ are the constants given in Proposition \ref{welldef}. In particular, $ (1-\varepsilon \mathcal{C}_2) \leq \varepsilon \mathcal{C}_{2_s^\ast} \|u_k\|^{2_s ^\ast -2}_{V} + \mathcal{C}_{p_\varepsilon } \| u _k \|^{p_\varepsilon - 2}_{V},$ $\forall \, k\in \mathbb{N}, $ which, by taking $\varepsilon$ small enough, would lead to a contradiction with the fact that $\|u_k\|_{V} = o(1).$ In view of that, in any case, we can argue as above to conclude that there must be a nonzero $w^{(n_0)}$ which is a critical point of $I.$ From \eqref{seis.um.sub}, $u_k(x + y_k ^{(n_0)})\rightarrow w^{(n_0)}(x)$ a.e. in $\mathbb{R}^N.$ Thus
\begin{multline*}
\mathcal{G}_{\mathcal{S}} = \lim _{k \rightarrow\infty} I(u_k) = \lim _{k \rightarrow\infty}\int _{\mathbb{R}^N}  \mathcal{F} (x,u_k(\cdot + y_k ^{(n_0)})) \dx \\=  \liminf_{k\rightarrow \infty} \int _{\mathbb{R}^N}\mathcal{F} (x,u_k(\cdot + y_k ^{(n_0)})) \dx  \geq \int _{\mathbb{R}^N}\mathcal{F} (x,w^{(n_0)}) \dx = I(w^{(n_0)}),
\end{multline*}
where we have used \eqref{A-R} or \eqref{dife_posi} to ensure that $\mathcal{F} (x,u_k(\cdot + y_k ^{(n_0)})) = \mathcal{F}(x, u_k) \geq 0$ a.e. in $\mathbb{R}^N.$ Thus, once again using \eqref{A-R} or \eqref{dife_posi}, we can see that $\mathcal{G}_{\mathcal{S}}=I(w^{(n_0)})>0.$

\ref{GS_DOIS} From Proposition \ref{welldef}, the norm
\begin{equation*}
\vertiii{u}^2_{\lambda} = \int _{\mathbb{R}^N} |(-\Delta)^{s/2} u| ^2 -\lambda |x|^{-2s}u^2 \dx,\quad u \in \mathcal{D}^{s,2}(\mathbb{R}^N),\quad 0<\lambda<\Lambda _{N,s},
\end{equation*}
is equivalent with respect to the norm $[\ \cdot \ ]_s$ in $\mathcal{D}^{s,2}(\mathbb{R}^N).$ Let $(u_k)$ be a minimizing sequence for $\mathcal{I}_\lambda,$ and for each $k,$ let $u_k ^\ast$ be the Schwarz symmetrization of $u_k$. Applying the fractional Polya-Szeg\"{o} inequality (see \cite[Theorem 3]{beckner}), for each $k,$
\begin{equation*}
	\begin{aligned}
	& \int _{\mathbb{R}^N}\int _{\mathbb{R}^N}  \frac{ |u_k^\ast (x) - u_k ^\ast (y) |^2}{|x-y|^{N + 2s}}\dxdy \leq \int _{\mathbb{R}^N}\int _{\mathbb{R}^N} \frac{ |u_k(x) - u_k(y) |^2}{|x-y|^{N + 2s}}\dxdy,  \\
	& \int_{\mathbb{R}^N} F(u^\ast_k) \dx = \int_{\mathbb{R}^N} F(u_k) \dx.
	\end{aligned}
\end{equation*}
Thus $(u^\ast_k)\subset \mathcal{D}_\rad^{s,2}(\mathbb{R}^N)$ and is also a minimizing sequence for \eqref{min_crit}. Now observe that $\vertiii{\ \cdot \ }_\lambda$ is invariant with respect to the action of dilations given in Theorem \ref{teo_tinta_frac}, more precisely,
\begin{equation*}
\vertiii{u}_{\lambda} ^2 = \vertiii{\gamma ^{\frac{N-2s}{2} }u (\gamma ^j \cdot) }^2_{\lambda},\quad\forall \, u \in \mathcal{D}^{s,2}(\mathbb{R}^N),\ \gamma >1 \text{ and }j\in \mathbb{Z},
\end{equation*}
and satisfies the homogeneity property, $\vertiii{u(\cdot / \delta) }^2_\lambda = \delta^{N-2s} \vertiii{u}^2 _{\lambda},$ $u \in \mathcal{D}^{s,2}(\mathbb{R}^N),$ $\delta >0.$ In view of Remark \ref{r_radial_Dcompact} and Corollary \ref{cor_pohozaev_hardy}, the proof now follows the same argument of \cite[Theorem 3.4]{paper1}, replacing $[\ \cdot \ ]_s$ by $\vertiii{\ \cdot \ }_\lambda$.
\end{proof}
\begin{remark}
\begin{enumerate}[label=(\roman*)]
\item In the context of the proof of Theorem \ref{teoremao}--(i), if we assume in addition that $f(x,t)$ satisfies \eqref{increasing_total}, then $\mathcal{G}_{\mathcal{S}} = c(I)=I(w^{n_0})$ and $w^{(n_0)}$ is nonnegative. Indeed the truncation given in Remark \ref{r_nonnegative} satisfies the assumptions of Theorem \ref{teoremao}--(i), and we can apply the same argument there, to conclude that the ground state $w^{(n_0)}$ is nonnegative. Furthermore, Remark \ref{r_minimax}--(iv) guarantees that the path $\zeta (t) = t w^{(n_0)},$ $t \geq 0,$ belongs to $\Gamma_{I}$ and $c(I) \leq I(w^{(n_0)}).$ On the other hand, considering $(u_k)$ given in the beginning of the proof of Theorem \ref{teoremao}, by Corollary \ref{decomp_auto_lemma}, Remark \ref{assymp_defined}--(ii) and estimate \eqref{seis.tres.sub}, up to a subsequence, we have
\begin{equation*}
c(I)=\lim _{k \rightarrow \infty}\left[  \frac{1}{2}\|u_k\|_{V} ^2 - \int_{\mathbb{R}^N} F(x,u_k) \dx \right] \geq \sum _{n \in \mathbb{N} _0} I ( w^{(n)}).
\end{equation*}
Consequently, using \eqref{A-R} or \eqref{dife_posi} to get $I ( w^{(n)}) \geq 0 $, we conclude that $c(I) = \mathcal{G}_{\mathcal{S}}.$
\item If we consider the infimum \eqref{min_crit} defined over $\mathcal{D}_\rad^{s,2} (\mathbb{R}^N),$ by Remark \ref{r_radial_Dcompact} we can obtain concentration-compactness of the minimizing sequences as described in \cite[Theorem 3.4]{paper1}. More precisely, for any minimizing sequence $(u_k)$ of \eqref{min_crit}, there exists a sequence $(j_k)$ in $\mathbb{Z}$ such that the sequence $(\gamma^{-\frac{N-2s}{2}j_k } u_k(\gamma^{-j_k}\cdot ) )$ contains a convergent subsequence in $\mathcal{D}^{s,2}_\rad (\mathbb{R}^N),$ whose the limit is a minimizer of \eqref{min_crit} in $\mathcal{D}^{s,2}_\rad (\mathbb{R}^N).$
\item In the context of the proof of Theorem \ref{teoremao} (ii), assume that $F(t) \geq 0$ for all $t\geq 0.$ Since $\vertiii{|u_k|} _\lambda \leq \vertiii{u_k}_\lambda,$ without loss of generality we can assume that each $u_k$ is nonnegative. In this case, the obtained minimizer for \eqref{min_crit} is nonnegative.
\end{enumerate}
\end{remark}
\section{Proof of Theorem \ref{teo_GS_alt}}
\begin{proof}
As mentioned, we prove Theorem \ref{teo_GS_alt} by using the Nehari manifold method (see \cite{szulkin}). For convenience of the reader the proof will be divided into several steps.\\
\noindent \emph{Step 1.} For each $u \in H^s_{V} \setminus \{0 \} $ there exists a unique $\tau(u) >0$ such that $\tau (u) u \in \mathcal{N}$ and $\max_{t \geq 0} I (t u) = I (\tau (u)u).$ In particular $\mathcal{N} \neq \emptyset.$ To see that the function $h_u (t)= I (t u),$ $t>0,$ has a maximum point $t_u,$ we proceed in a similar way as in the Remark \ref{r_minimax}--(iv). Moreover, $h' (t_u) = 0,$ if and only if $t_u u $ belongs to $\mathcal{N}$ and
\begin{equation}\label{aux_nehari}
\|u\| _{V} ^2-\int _{\mathbb{R}^N}b(x) u^2 \dx = \frac{1}{t_u}\int_{\mathbb{R}^N} f (x, t_u u) u   \dx.
\end{equation}
By \eqref{increasing_total} the right-hand side of the above identity occurs at most one point. Thus there is a unique maximum point $\tau(u)=t_u$ for the function $h_u (t).$

\noindent \emph{Step 2.} The function $\tau : H^s_{V} \setminus \{0 \} \rightarrow (0,\infty)$ is continuous. Thus the map $\eta : H^s_{V} \setminus \{0 \}  \rightarrow \mathcal{N},$ defined by $\eta (u) =\tau (u)u$ is continuous and $\eta \big|_{\mathcal{S}}$ is a homeomorphism of the unit sphere $\mathcal{S}$ of $H^s _{V} (\mathbb{R}^N)$ in $\mathcal{N}.$

Assume that $u_n\rightarrow u$ in $H^s_{V} \setminus \{0 \}.$ Using that $F(x,t)>0$ and \eqref{A-R} we have $F(x,t) \geq C_1 |t|^\mu -C_2 t^2,$ $\text{for a.e. }x \in \mathbb{R}^N$ and $\forall \, t \in \mathbb{R}.$ Thus, from identity \eqref{aux_nehari} we obtain that
\begin{equation*}
\|u_n\| _{V} ^2-\int _{\mathbb{R}^N}b(x) u_n^2 \dx \geq C_1 |\tau(u_n)|^{\mu -2} \int_{\mathbb{R}^N}|u_n|^\mu \dx-C_2\|u_n\| _{V} ^2,\quad\forall \, n\in \mathbb{N},
\end{equation*}
that is, $(u_n)\subset L ^{\mu } (\mathbb{R}^N)$ with $\| u_n \| _{V} ^2 \geq C |\tau(u_n)| ^{\mu -2} \int_{\mathbb{R}^N}|u_n|^\mu \dx,$ $\forall \, n\in \mathbb{N}.$ Moreover, since $u \neq 0,$ $\|u_n\| > C>0,$ $\forall n.$ Thus $(\tau (u_n))$ is bounded. We next prove that any subsequence of $(\tau (u_n))$ has a convergent subsequence with the same limit $\tau(u),$ thus $\tau(u_n) \rightarrow \tau (u).$ It is clear that for a subsequence $\tau(u_n) \rightarrow t_0>0.$ In fact, using \eqref{bem_def}, \eqref{V_be} and \eqref{aux_nehari},
\begin{equation*}
\| u _n \| _{V} ^2 -\int _{\mathbb{R}^N}b(x) u_n^2 \dx \leq \varepsilon C \left( \| u_n \|^2 _{V} + \tau (u_n) ^{2 _s ^\ast -2} \| u_n \| ^{2 ^\ast _s} _{V}\right) + C _\varepsilon \tau(u_n) ^{p _\varepsilon -2} \| u_n \| _{V} ^{p _\varepsilon},\ \forall \, n\in \mathbb{N}.
\end{equation*}
From which, we obtain
\begin{equation}\label{nehari_estimate}
\left(1 -\varepsilon \mathcal{C}_2 - \frac{\|b(x)\|_\beta}{\mathcal{C}_{V} ^{(\beta)}}  \right)\| u _n \| _{V} ^2 \leq  \varepsilon \mathcal{C}_{2_s^\ast} \tau (u_n) ^{2 _s ^\ast -2} \| u_n \| ^{2 ^\ast _s} _{V}  + C _\varepsilon \mathcal{C}_{p_\varepsilon} \tau(u_n) ^{p _\varepsilon -2} \| u_n \| _{V} ^{p _\varepsilon},\ \forall \, n\in \mathbb{N},
\end{equation}
which implies $t_0 >0,$ by taking $\varepsilon$ small enough. Thus we may apply the Dominated Convergence Theorem in \eqref{aux_nehari} to conclude that $t_0 = \tau (u)$ and the continuity of the function $\tau.$ Using \eqref{aux_nehari} to compute $\tau (u/\|u\|_{V})$ we obtain that
\begin{equation*}
\|u\| _{V} ^2-\int _{\mathbb{R}^N}b(x) u^2 \dx = \frac{1}{\frac{\tau(u/\|u\| _{V})}{\|u\| _{V}}} \int_{\mathbb{R}^N} f\left(x,\frac{\tau(u/\|u\| _{V})}{\|u\| _{V}}\right) u \dx,
\end{equation*}
which by uniqueness gives $\tau (u/\|u\|_{V})= \tau (u)u.$ Consequently the inverse of $\eta$ is the retraction map given by $\varrho : \mathcal{N} \rightarrow \mathcal{S},\ \varrho (u)=u/\|u\| _{V}.$\\
\noindent \emph{Step 3.} $\mathcal{N}$ is away from the origin, that is, there is $R_{\mathcal{N}} >0$ such that $\|u\|_V > R_{\mathcal{N}} > 0$ if $u \in \mathcal{N}.$ Indeed, estimate \eqref{nehari_estimate} implies that
\begin{equation*}
1 -\varepsilon \mathcal{C}_2 - \frac{\|b(x)\|_\beta}{\mathcal{C}_{V} ^{(\beta)}}  \leq \varepsilon \mathcal{C}_{2_s^\ast} \| u\|_{V} ^{2_s ^\ast -2} +C _\varepsilon \mathcal{C}_{p_\varepsilon} \| u \| _{V} ^{p_\varepsilon -2},\quad \forall \, u \in \mathcal{N}.
\end{equation*}
\noindent\emph{Step 4.} For all $\zeta \in \Gamma_I$ we have that $\zeta ([0, \infty )) \cap \mathcal{N} \neq \emptyset.$ Let us suppose that this assertion is false, that is, there exists $\zeta_0 \in \Gamma _{I}$ which does not intercepts $\mathcal{N}$ in any point. Let $t_0>0$ such that $I (\zeta_0 (t_0)) <0$ and $\zeta_0 (t) \neq 0,$ for all $(0,t_0].$ We prove now that $\tau (\zeta (t))>1$ for all $t\in (0,t_0].$ In fact, by continuity, there is $\delta>0$ such that $\|\zeta _0 (t)\| < R_{\mathcal{N}},$ for all $t\in [0, \delta].$ We also have that $ \|\tau (\zeta_0 (t) ) \zeta_0 (t) \|_{V} > R_{\mathcal{N}},$ which implies $\tau (\zeta_0 (t)) >1,$ for all  $t\in (0, \delta].$ The continuity of $\tau (t)$ and the fact that $\zeta _0 (t) \not\in \mathcal{N},$ for all $t,$ allows us to choose $\delta = t_0.$ On the other hand, by \eqref{A-R} and \eqref{increasing_total},
\begin{align*}
h_{\zeta (t_0) } (t) & \geq \frac{t^2}{2}\left[ \| \zeta _0 (t_0)\|^2_V  - \int_{\mathbb{R}^N} b(x) |\zeta _0 (t_0)|^2 \dx - \frac{2}{\mu }\int_{\mathbb{R}^N} \frac{f(x,t \zeta_0 (t_0))}{t \zeta_0 (t_0)} |\zeta _0 (t_0)|^2 \dx \right]\\
& >\frac{t^2}{2} \left[\int_{\mathbb{R}^N} \frac{f(x,\tau(\zeta_0 (t_0)) \zeta_0 (t_0) )}{\tau(\zeta_0 (t_0)) \zeta_0 (t_0)}|\zeta _0 (t_0)|^2-\frac{f(x,t \zeta_0 (t_0))}{t \zeta_0 (t_0)} |\zeta _0 (t_0)|^2 \dx\right]\\
&>0,\quad\forall \,t\in (0,\tau (\zeta (t_0))].
\end{align*}
In particular, $0<h_{\zeta (t_0) } (1)=I (\zeta_0 (t_0)),$ which is a contradiction with the choice of $\zeta_0 (t_0).$

\noindent\emph{Step 5.} $ c_{\mathcal{N}  }(I)  = \bar{c}(I).$ In fact, since $\eta \big|_{\mathcal{S}}$ is a homeomorphism, we have $
\bar{c}(I) = \inf _{u \in H^s _V \setminus \{0\}} I (\tau(u) u) = \inf _{u \in \mathcal{S}} I (\tau(u) u)=  c_{\mathcal{N}  }(I). $

\noindent\emph{Step 6.} $\bar{c} (I ) = c(I ).$ Given $u \in H^s _V \setminus \{0\}$, define the path $\zeta(t) = t t_0 u,$ where $t_0>0$ is chosen in such way that $I(t_0 u)<0.$ Then, by Remark \ref{r_minimax}--(iv), it is easy to see that $\zeta \in \Gamma_I$ and $ \max_{t \geq 0}I(tu) = \max_{t \geq 0} I(\zeta(t)) \geq c(I).$ Consequently $c(I )\leq \bar{c} (I).$ On the other hand, given $\zeta \in \Gamma_I,$ there exists $t_0$ such that $\zeta (t_0)$ belongs to $\mathcal{N}.$ Thus, $ \max_{t \geq 0} I(\zeta(t)) \geq I(\zeta(t_0)) \geq c_{\mathcal{N}}(I)=\bar{c} (I ).$ Since $\zeta \in \Gamma_I$ is arbitrary, $c(I)\geq \bar{c} (I ).$
\end{proof}
\begin{remark}
In view of Remark \ref{r_nonnegative}, if $b(x) \equiv 0,$ then the radial ground state $u$ obtained in Theorem \ref{teo_GS_alt} can be considered as being nonnegative.
\end{remark}
\section{Proof of Theorem \ref{teo_sub} }
Before the proof of Theorem \ref{teo_sub}, to complement our discussion, we are going to compare the minimax level of limit functionals $I_\mathcal{P}$ and $I_\infty$ with the minimax level of the energy functional $I$ associated with Eq. \eqref{P}. Some arguments used to prove this result of comparison are used in the proof of Theorem \ref{teo_sub}.
\begin{proposition}\label{p_assumption_faz_sentido}
Assume that $f(x,t)$ satisfies either \eqref{bem_def}--\eqref{posi_algum}, \eqref{lim_infinito} or \eqref{posi_algum}--\eqref{dife_esti}, \eqref{lim_infinito}. Moreover, suppose that $b(x)\equiv 0,$ \eqref{V_pe}--\eqref{V_sirakov} and \eqref{increasing}. Then $c(I) \leq c(I _\mathcal{P}).$ Alternatively, if instead of the last set of hypothesis we assume that $V(x)\geq 0,$ $b(x)$ has compact support, \eqref{V_sirakov}--\eqref{V_as} and  \eqref{lim_infinito_infty}, then $c(I) \leq c(I _\infty ).$ Moreover, under these conditions, if we assume  \eqref{suf_cond_sub}, then \eqref{minimax_comper} and \eqref{minimax_comper'} holds true respectively for each considered case.
\end{proposition}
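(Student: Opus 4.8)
\emph{Overall strategy.} The plan is to compare the three minimax levels by ``sending mass off to infinity'': a near-optimal path for the limiting functional, translated far away, becomes an admissible competitor for $c(I)$, and for the strict inequalities one captures the strict sign in \eqref{suf_cond_sub} along a distinguished path running through the ground state of the limit problem. I would work throughout with paths on $[0,1]$, using Remark~\ref{r_minimax}~(i) together with its obvious analogues for $I_\mathcal{P}$ and $I_\infty$ (whose mountain-pass geometry is Remark~\ref{r_minimax}~(iii)) to replace $\Gamma_I$ by $\Gamma^1_I$; then each path has compact image in $H^s_V(\mathbb{R}^N)$, which is what makes the passages to the limit below uniform in the parameter.

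\emph{The inequalities $c(I)\le c(I_\mathcal{P})$ and $c(I)\le c(I_\infty)$.} Given $\varepsilon>0$, pick $\gamma\in\Gamma^1_{I_\mathcal{P}}$ with $\sup_{t\in[0,1]}I_\mathcal{P}(\gamma(t))\le c(I_\mathcal{P})+\varepsilon$ and $\|\gamma(1)\|_V$ large, and for $y\in\mathbb{Z}^N$ put $\gamma_y(t)=\gamma(t)(\cdot-y)$. Since $V$ is $\mathbb{Z}^N$-periodic, $D_{\mathbb{Z}^N}$ acts isometrically on $H^s_V(\mathbb{R}^N)$ (Remark~\ref{assymp_defined}~(ii)), so $\|\gamma_y(t)\|_V=\|\gamma(t)\|_V$, while $\int_{\mathbb{R}^N}F(x,\gamma(t)(x-y))\dx=\int_{\mathbb{R}^N}F(x+y,\gamma(t)(x))\dx$; by \eqref{lim_infinito} and the periodicity of $F_\mathcal{P}$ one has $F(x+y,\tau)\to F_\mathcal{P}(x,\tau)$ as $|y|\to\infty$ in $\mathbb{Z}^N$, uniformly for $\tau$ bounded. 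A Vitali-type argument, resting on the compactness of $\gamma([0,1])$ in $L^p(\mathbb{R}^N)$ for $2\le p\le2_s^\ast$ (hence uniform integrability and tightness) and on the subcritical bound \eqref{bem_def}, then gives $I(\gamma_y(t))\to I_\mathcal{P}(\gamma(t))$ uniformly in $t$, so for $|y|$ large $\gamma_y\in\Gamma^1_I$ and $c(I)\le\sup_tI(\gamma_y(t))\le c(I_\mathcal{P})+2\varepsilon$; let $\varepsilon\to0$. For $c(I)\le c(I_\infty)$ the same scheme works with $y\in\mathbb{R}^N$, $|y|\to\infty$: here $H^s_V(\mathbb{R}^N)=H^s(\mathbb{R}^N)$ with equivalent norms (Remark~\ref{assymp_defined}~(i)), \eqref{V_as} yields $\int V(\cdot+y)|\gamma(t)|^2\to V_\infty\int|\gamma(t)|^2$, the compact support of $b$ gives $\int b(\cdot+y)|\gamma(t)|^2\to0$, and \eqref{lim_infinito} gives $\int F(\cdot+y,\gamma(t))\to\int F_\infty(\gamma(t))$, so again $I(\gamma_y(t))\to I_\infty(\gamma(t))$ uniformly.

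\emph{The strict inequalities under \eqref{suf_cond_sub}.} Here $I\le I_\mathcal{P}$ pointwise (recall $b\equiv0$ in this case) and $I\le I_\infty$ pointwise (also using $V-b\le V\le V_\infty$ and $b\ge0$). In the periodic case let $w_\mathcal{P}$ be a ground state of $I_\mathcal{P}$ (Theorem~\ref{teoremao}~(i)); by \eqref{increasing} and the Remark after Theorem~\ref{teoremao}, $I_\mathcal{P}(w_\mathcal{P})=c(I_\mathcal{P})$ and $t\mapsto I_\mathcal{P}(tw_\mathcal{P})$ is maximized only at $t=1$, so the ray $\zeta(t)=tw_\mathcal{P}$ (reparametrized on $[0,1]$ so that $I(\zeta(1))<0$) lies in $\Gamma^1_I$ and $\sup_{t\ge0}I_\mathcal{P}(tw_\mathcal{P})=c(I_\mathcal{P})$. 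Writing $I(tw_\mathcal{P})=I_\mathcal{P}(tw_\mathcal{P})-D(t)$ with $D(t)=\int_{\mathbb{R}^N}[F(x,tw_\mathcal{P})-F_\mathcal{P}(x,tw_\mathcal{P})]\dx\ge0$, the supremum of $I(tw_\mathcal{P})$ is attained at some $t_1>0$ (since $I(tw_\mathcal{P})/t^2\to\tfrac12\|w_\mathcal{P}\|_V^2>0$ as $t\to0^+$ and $I(tw_\mathcal{P})\to-\infty$); since $w_\mathcal{P}$ is continuous and vanishes at infinity by elliptic regularity, the set $\{0<|w_\mathcal{P}|<\delta/t_1\}$, with $(-\delta,\delta)$ the interval on which the first inequality of \eqref{suf_cond_sub} is strict, has positive measure, so $D(t_1)>0$ and $c(I)\le\sup_tI(tw_\mathcal{P})<c(I_\mathcal{P})$, i.e.\ \eqref{minimax_comper}. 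For $I_\infty$ no monotonicity is available, so I would instead exploit $f_\infty\in C^1$: take a strictly positive ground state $w_\infty$ of the autonomous limit problem (Theorem~\ref{teoremao}~(i); positivity via the truncation of Remark~\ref{r_nonnegative} and the strong maximum principle), note $\int F_\infty(w_\infty)-\tfrac{V_\infty}{2}w_\infty^2=\tfrac{N-2s}{2N}[w_\infty]_s^2>0$ by Corollary~\ref{pohozaev_standard}, and a direct scaling computation with $\zeta(t)=w_\infty(\cdot/t)$ gives $\sup_{t\ge0}I_\infty(\zeta(t))=I_\infty(w_\infty)$, this value being $c(I_\infty)$ (one bound from admissibility of $\zeta$, the other from the concentration-compactness for the autonomous problem as in Theorem~\ref{teoremao}~(i)). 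Along $\zeta$ the gap $I_\infty(\zeta(t))-I(\zeta(t))=\tfrac12\int(V_\infty-V+b)|\zeta(t)|^2\dx+\int[F(x,\zeta(t))-F_\infty(\zeta(t))]\dx$ is strictly positive at the maximizer $t_*>0$ of $I(\zeta(\cdot))$, whether the strictness in \eqref{suf_cond_sub} sits in the potential (use $w_\infty>0$ on the positive-measure set $\{V<V_\infty\}$) or in the nonlinearity (use that $\{0<w_\infty(\cdot/t_*)<\delta\}$ has positive measure); hence $c(I)\le I(\zeta(t_*))<c(I_\infty)$, i.e.\ \eqref{minimax_comper'}.

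\emph{Main obstacle.} The translation arguments are routine once the uniform (Vitali) passage to the limit is arranged, which uses only the compactness of the path image and the subcritical growth \eqref{bem_def}. The genuine difficulty lies in the strict inequalities: one must know that the ground state of the limit problem sits exactly at its mountain-pass level, so that the distinguished ray (resp.\ dilation) through it is not merely admissible but optimal for the limiting functional, and in the $I_\infty$ case one has to substitute the Pohozaev identity (Corollary~\ref{pohozaev_standard}) for the missing monotonicity and invoke strict positivity of $w_\infty$ in order to read off the strict sign of \eqref{suf_cond_sub} along a concrete path. Matching these ingredients to the exact hypotheses in force---in particular when $f_\mathcal{P}$, resp.\ $f_\infty$, satisfies only \eqref{quadratic} rather than \eqref{A-R}---is the delicate point.
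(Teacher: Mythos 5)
Your argument is correct and lands on the same two pillars as the paper's proof (competitor paths obtained by pushing the limit problem's optimal objects to infinity; strictness read off along the ray, resp.\ dilation, through the limit ground state via \eqref{increasing}, resp.\ Corollary~\ref{pohozaev_standard}), but the route differs in two places worth recording. For the non-strict inequalities the paper does not use near-optimal paths at all: it takes a mountain-pass solution $u$ of the limit problem, forms the one-parameter families $\zeta_k(t)=tu(\cdot-y_k)$ (resp.\ $\lambda_k(t)=u((\cdot-y_k)/t)$), shows $I(\zeta_k(t))\to I_\mathcal{P}(tu)$ uniformly on compacta, proves the maximizers $t_k$ stay bounded, and concludes $c(I)\le\lim_k\max_tI(\zeta_k(t))\le\max_tI_\mathcal{P}(tu)=c(I_\mathcal{P})$, the last equality resting on \eqref{increasing} (resp.\ on the Pohozaev identity). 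Your $\varepsilon$-optimal-path-plus-Vitali scheme buys you the inequality $c(I)\le c(I_\mathcal{P})$ without invoking existence or optimality of a ground state for the limit problem, at the price of a uniform-in-$t$ limit over a compact path image (which is fine); the paper's scheme reuses the same objects for the strict inequality, which is why it prefers them. For the strict inequalities you exploit the pointwise bounds $I\le I_\mathcal{P}$, $I\le I_\infty$ that \eqref{suf_cond_sub} provides and work with untranslated paths, exactly as the paper does in part (iii) of Theorem~\ref{teo_sub}; the paper's proof of this Proposition instead keeps the translated paths and inserts the strict inequality $I(t_ku(\cdot-y_k))<I_\mathcal{P}(t_ku)$ at the maximizer. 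These are equivalent. One caveat: to convert the pointwise strictness of $F_\mathcal{P}<F$ on $(-\delta,\delta)$ into a strict integral inequality you need $|\{0<|t_1w_\mathcal{P}|<\delta\}|>0$, which you justify by continuity and decay of $w_\mathcal{P}$; that regularity is not free under the stated hypotheses (it requires the bootstrap of \cite[Section 6]{secchi} or similar), and the paper is silent on this point, so you should either supply the regularity or argue measure-theoretically. Likewise, your reliance on Theorem~\ref{teoremao}~(i) for the limit problems presupposes that $f_\mathcal{P}$ (resp.\ $f_\infty$) inherits the full set \eqref{posi_algum}--\eqref{dife_esti} in the non-Ambrosetti--Rabinowitz alternative, which \eqref{lim_infinito} does not literally guarantee; the paper shares this implicit assumption.
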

\begin{proof}
(i) Let $u\in H^s_{V}(\mathbb{R}^N)$ be a nonnegative (see Remark \ref{r_nonnegative}) nontrivial weak solution for $(-\Delta)^s u + V(x) u = f_\mathcal{P}(x,u),$ at the mountain pass level for $I_\mathcal{P}$, that is, $I_\mathcal{P}(u) = c(I_\mathcal{P}).$ For each $k,$ we define the path $\zeta _k (t) = t u (\cdot - y_k),\quad t \geq 0.$ where $(y_k)$ is taken such that $|y_k|\rightarrow\infty.$ The idea is to prove that
\begin{equation}\label{sake}
c(I) \leq \limsup _{k \rightarrow \infty }\max _{t \geq 0} I(\zeta _k (t) ) \leq \max _{t \geq 0} I _\mathcal{P} (t u)=c(I_\mathcal{P}).
\end{equation}
In fact, taking into account that $I$ and $I_\mathcal{P}$ are locally Lipschitz sets of $H^s_{V}(\mathbb{R}^N)$ (they are $C^1$ in $H^s _{V}(\mathbb{R}^N)$) and the following estimate
\begin{equation*}
\left| I( \zeta _k (t) ) - I _{\mathcal{P}} (tu) \right| \leq \int _{\mathbb{R}^N } \left| F(x + y_k,tu) - F_\mathcal{P} (x + y_k,tu) \right| \dx,
\end{equation*}
by using a density argument we get that $\lim _{k \rightarrow \infty } I( \zeta _k (t) ) = I _{\mathcal{P}} (tu),$ uniformly in compact sets of $\mathbb{R}.$ Consequently we may proceed as in \cite[Proposition 9.1]{paper1}. First note that
\begin{equation*}
\lim_{k \rightarrow \infty}\int _{\mathbb{R}^N} F(x+y_k,tu)\dx=\int _{\mathbb{R}^N}F_\mathcal{P}(x,tu) \dx,\quad\text{for each }t>0.
\end{equation*}
In particular,
\begin{equation*}
\int _{\mathbb{R}^N} F(x+y_k,u)\dx >0,\quad\text{ for }k\text{ large enough.}
\end{equation*}
Thus, using either \eqref{bem_def}--\eqref{posi_algum} or \eqref{posi_algum}--\eqref{dife_esti} and the arguments of Remark \ref{r_minimax}--(iv), we see that, for $k$ large enough, $ \zeta _k$ belongs to $\Gamma _{I}.$ As a consequence, there exists $t_k>0$ such that $I(\zeta _k (t_k) )= \max_{t \geq 0} I(\zeta _k (t))>0.$ We claim that $(t_k)$ is bounded. In fact, assume by contradiction that up to a subsequence $t_k \rightarrow \infty.$ Thus, by the arguments of Remark \ref{r_minimax}--(iv) we get
\begin{equation*}
I(\zeta _k (t_k) ) = \frac{t_k^2}{2}\|u\|^2 _{V} -\int_{\mathbb{R}^N} F(x+y_k,t_k u )\dx\rightarrow - \infty,\text{ as }t\rightarrow \infty,
\end{equation*}
which leads to a contradiction with the fact that $I(\zeta _k (t_k) ) > 0$ for all $k.$ Therefore, up to a subsequence, $t_k \rightarrow t_0,$ and thus $\lim_{k\rightarrow \infty}\max _{t \geq 0} I(\zeta _k (t_k))=I _\mathcal{P} ( t_0 u ),$ which leads to \eqref{sake}.

(ii). The second case is proved in a similar way. Let $w\in H^s_{V}(\mathbb{R}^N)=H^s(\mathbb{R}^N)$ be a nontrivial weak solution for the equation $(-\Delta)^s w + V_\infty w = f_\infty(w),$ at the mountain pass level, more precisely, $I_\mathcal{\infty}(w) = c(I_\mathcal{\infty}).$ For each $k,$ define the path $\lambda _k (t) = w \left((\cdot - y_k)/t\right) ,$ $t \geq 0.$ where $(y_k)$ is chosen in a such way that $|y_k|\rightarrow\infty.$ As before, we consider the estimate
\begin{multline*}
\left| I( \lambda _k (t) ) - I_\infty (w (\cdot / t )) \right| \\\leq \frac{t^N}{2}\int _{\mathbb{R}^N } \left|(  V(tx +y_k) -b(tx+y_k) )- V _\infty \right| w^2 \dx + t^N\int _{\mathbb{R}^N } \left| F(tx + y_k,w) - F_\infty (w) \right| \dx,
\end{multline*}
and the fact that $I$ and $I_\infty$ are Lipschitz in bounded sets of $H^s(\mathbb{R}^N)$ to obtain, by a density argument, that $\lim _{k \rightarrow \infty } I( \lambda _k (t) ) = I_\infty (w (\cdot / t )),$ uniformly in compact sets of $\mathbb{R}.$ We also have that the path $\lambda _k$ belongs to $\Gamma_I,$ for $k$ large enough. In fact, assuming the contrary, we would obtain $k_0$ and a sequence $t_n \rightarrow \infty$ such that $I(\lambda_{k_0} (t_n)) > 0,$ for all $n.$ On the other hand, we have that
\begin{equation*}
\lim_{n \rightarrow \infty} \int_{\mathbb{R}^N}F(t_nx+y_{k_0},w)- \frac{1}{2}\left[ V(t_nx+y_{k_0}) -b(t_nx+y_{k_0})\right] w^2\dx = \int_{\mathbb{R}^N}F_\infty(w) -\frac{1}{2}V_\infty w^2 \dx,
\end{equation*}
which leads to the contradiction $I(\lambda_{k_0} (t_n)) < 0,$ if $n$ large enough. Let $t_k>0$ such that $I(\lambda _k (t_k) )= \max_{t \geq 0} I(\lambda _k (t))>0.$ We claim that $(t_k)$ is bounded. On the contrary, 
\begin{multline*}
0<I(\lambda_k (t_{n_k}))\\= \frac{1}{2} t_{n_k}^{N-2s} [w]_s ^2 -t_{n_k}^N \left[\int_{\mathbb{R}^N}F(t_{n_k}x+y_k,w)- \frac{1}{2}(V(t_{n_k}x+y_k)-b(t_{n_k} x + y_k) w^2\dx\right]\\ \rightarrow - \infty,\text{ as }k\rightarrow \infty,
\end{multline*}
which is impossible. Thus, up to a subsequence, $t_k\rightarrow t_0$ and $\lim_{k\rightarrow \infty}\max _{t \geq 0} I(\lambda _k (t))=I _\infty (w(\cdot/t_0)).$ As a consequence, $c(I) \leq \lim_{k\rightarrow \infty}\max _{t \geq 0} I(\lambda _k (t_k)) \leq \max _{t \geq 0} I_\infty (w(\cdot / t)) = c(I_\infty),$ where we have used Corollary \ref{pohozaev_standard} to conclude that $t=1$ is the unique critical point of $ I_\infty (w(\cdot / t)).$

Now assume \eqref{suf_cond_sub}. Considering the above discussion, for each case respectively, we have for $k$ large enough.
\begin{equation*}
	\begin{aligned}
&c(I)\leq \max_{t\geq 0} I(\zeta_k(t)) = I(t_k u(\cdot - y_k) ) < I_\mathcal{P}(t_k u) \leq \max_{t\geq 0}I_\mathcal{P}(t_k u) = c(I_\mathcal{P}),\\
&c(I) \leq \max_{t\geq 0} I(\lambda_k(t)) = I(u((\cdot-y_k)/t_k)) < I_\infty(u(\cdot/t_k)) \leq \max_{t\geq 0}I_\infty(u(\cdot/t_k)) = c(I_\infty).\qedhere
\end{aligned}
\end{equation*}
\end{proof}
In order to prove our existence result without use the compactness conditions \eqref{minimax_comper} and \eqref{minimax_comper'}, we use the argument of \cite[proof of Theorem 1.2]{reinaldo}. Thus we use \cite[Theorem~2.3]{lins}, on 
the existence of a critical point of $I$ 
whenever the minimax level \eqref{minimax} is attained (see Remark \ref{r_minimax}--(i)).
\begin{proof}[Proof of Theorem \ref{teo_sub} completed]
In view of Lemma \ref{geometry} and Proposition \ref{p_psbounded} there exists a bounded sequence $(u_k)$ such that $I(u_k) \rightarrow c(I)$ and $I'(u_k) \rightarrow0,$ for both cases of this theorem. Let $(w ^{(n)})$ and $(y_k ^{(n)})$ be the sequences given in Theorem \ref{teo_tinta_frac_sub} for the sequence $(u_k).$ The underlying main idea to proof the concentration-compactness of Theorem \ref{teo_sub}, follows the same one of \cite[Theorem 3.6]{paper1} which we shall now describe: we prove that $w^{(n)}= 0$ for all $n\geq 2,$ which by assertions \eqref{seis.um.sub}, \eqref{seis.quatro.sub} and Proposition \ref{lemmaconverge} imply that $u_k \rightarrow w^{(1)}$ in $H_{V}^s(\mathbb{R}^N),$ up to a subsequence. To this end, we argue by contradiction and assume the existence of at least one $w^{(n_0)} \neq 0,$ $n_0 \geq 2.$

\ref{teo_sub_UM} In view of Remark \ref{assymp_defined}--(ii), by Proposition \ref{comp1} and \eqref{seis.tres.sub}, up to a subsequence, we have
\begin{equation}\label{teo_sub_minimax_est_UM}
c(I)=\lim _{k \rightarrow \infty}\left[  \frac{1}{2}\|u_k\|_{V} ^2 - \int_{\mathbb{R}^N} F(x,u_k) \dx \right] \geq I(w^{(1)})+\sum _{n \in \mathbb{N} _0,n >1} I_\mathcal{P} ( w^{(n)}).
\end{equation}
We claim that the terms on the right-hand side of \eqref{teo_sub_minimax_est_UM} are nonnegative. Indeed, following as in the proof of Theorem \ref{teoremao} $w^{(1)}$ and $w^{(n)},$ $n\geq 2,$ are critical points for $I$ and $I_\mathcal{P},$ respectively. In view of that, \eqref{A-R} or \eqref{dife_posi} implies that $I(w^{(1)}) \geq 0$ and $I_\mathcal{P}(w^{(n)}) \geq 0,$ $n\geq 2,$ respectively. On the other hand, Remark \ref{r_minimax}--(iv) guarantees that $\zeta^{(n_0)}(t)=tw^{(n_0)}\in \Gamma_{I_\mathcal{P}}$ and $c(I_\mathcal{P})<  I_\mathcal{P}(w^{(n_0)}).$ This, together with \eqref{teo_sub_minimax_est_UM} and \eqref{minimax_comper} leads to a contradiction.

\ref{teo_sub_DOIS} Following the proof of Theorem \ref{teo_tinta_frac_sub}, we can replace $\| \cdot\|$ by the equivalent norm $\| \cdot \|_{V_\infty}$ in assertions \eqref{seis.um.sub}--\eqref{seis.quatro.sub}. Consequently, by \eqref{seis.tres.sub}, Propositions \ref{comp1} and \ref{p_l2}, up to a subsequence, 
\begin{equation}
c(I)\geq\lim _{k \rightarrow \infty}\left[  \frac{1}{2}\|u_k\|_{V} ^2 -\int_{\mathbb{R}^N} b(x) u^2_k\dx- \int_{\mathbb{R}^N} F(x,u_k) \dx \right]\geq I(w^{(1)}) +\sum _{n \in \mathbb{N} _0,n >1} I_{\infty } ( w^{(n)}) \label{liminf}.
\end{equation}
Thus, it suffices to prove that the right-hand side of \eqref{liminf} is non-negative and $I_\infty(w^{(n)}) \geq c(I _\infty),$ $\forall \, n \geq 2.$ In fact, $c(I) \geq I(w^{(n_0)}) \geq c(I _\infty),$ which leads to a contradiction with \eqref{minimax_comper}. To do this, we prove that  $w^{(1)}$ and $w^{(n)}$ are critical points for $I$ and $I_\infty$ respectively, $n\geq 2$. Let $\varphi \in C^\infty _0 (\mathbb{R}^N)$ and $h^{(n)}\in L ^{2^\ast_s -1}(\supp (\varphi))$ as in \eqref{dominada}. By \eqref{V_as} and \eqref{seis.dois.sub}, there exists $k_0=k_0 ( \varphi)$ such that $V(x+y_k ^{(n)}) < 1 + V_\infty,$ $\forall \, k>k_0,$ $x \in \supp (\varphi)$ and $n \geq 2.$
Thus,
\begin{equation*}
	\begin{aligned}
	&|V(x + y_k ^{(n)}) u_k (x + y_k ^{(n)}) \varphi (x)| \leq (\varepsilon + V_\infty) h ^{(n)}(x) |\varphi(x)| \in L ^1 (\supp (\varphi)), \text{ for }k>k_0,  \\
	&V(x + y_k ^{(n)}) u_k (x + y_k ^{(n)}) \varphi (x) \rightarrow V_\infty w^{(n)} (x)\varphi(x)\quad \text{a.e. in } \mathbb{R}^N,
	\end{aligned}
\end{equation*}
which together with the Dominated Convergence Theorem implies
\begin{align*}
\lim _{k \rightarrow \infty }(u_k ,\varphi (\cdot - y_k ^{(n)}) ) _{V} &= \lim_{k \rightarrow \infty} \left[  [u_k (\cdot + y_k ^{(n)} ), \varphi]_s + \int_{\mathbb{R}^N} V(x +y_k ^{(n)}) u_k (\cdot +y_k ^{(n)}) \varphi (x) \dx \right]  \nonumber \\
&=[w^{(n)}, \varphi]_s + \int_{\mathbb{R}^N} V_\infty w^{(n)} (x) \varphi (x) \dx. 
\end{align*}
And for the same reason,
\begin{equation*}
\lim _{k \rightarrow \infty} \int _{\mathbb{R}^N} f(x + y_k ^{(n)} , u_k (\cdot + y_k ^{(n)} ) ) \varphi \dx = \int _{\mathbb{R}^N} f_\infty ( w^{(n)} )\varphi \dx.
\end{equation*}
Consequently, taking the limit in
\begin{equation*}
I'(u_k)\cdot \varphi (\cdot - y_k ^{(n)}) = (u_k ,\varphi (\cdot - y_k ^{(n)}) ) _{V} - \int _{\mathbb{R}^N} f(x + y_k ^{(n)} , u_k (\cdot + y_k ^{(n)} ) ) \varphi \dx,
\end{equation*}
we deduce that $I'(w^{(1)})=0$ and $I'_\infty(w^{(n)})=0,$ $n\geq 2.$ Using \eqref{A-R} or \eqref{dife_posi} we get that $I(w^{(1)})\geq 0$ and $I_\infty(w^{(n)})\geq0,$ $n\geq 2.$ Finally, define $\lambda^{(n_0)}(t) = w^{(n_0)} (\cdot / t),$ $t\geq 0.$ By Corollary \ref{pohozaev_standard},
\begin{equation*}
I _\infty (\lambda^{(n_0)}(t)) = \frac{1}{2} t^{N-2s} [w^{(n_0)}]_s ^2 - t^N \left[ \int _{\mathbb{R}^N} F _{\infty} (w^{(n_0)}) - \frac{V _\infty }{2} |w^{(n_0)}|^2 \dx \right] \rightarrow - \infty,\text{ as }t\rightarrow \infty,
\end{equation*}
which together with Remark \ref{assymp_defined} implies that $\lambda^{(n_0)}\in \Gamma _{I_\infty}.$ Corollary \ref{pohozaev_standard} gives that $t=1$ is the unique critical point of $I _\infty (\lambda^{(n_0)}(t)).$ Thus, $c(I _\infty) < \max _{t \geq 0} I_\infty (\lambda^{(n_0)}(t)) = I_\infty (w^{(n_0)}),$ a contradiction.

\ref{teo_sub_TRES} Finally, assume \eqref{replace_improved} instead of \eqref{minimax_comper} and \eqref{minimax_comper'}. Consider the existence of $w^{(n_0)}\neq 0,$ $n_0 \in \mathbb{N}_0,$ and the paths $\zeta^{(n_0)}$ and $\lambda^{(n_0)}$ as above. Taking account the above discussion, by estimates \eqref{teo_sub_minimax_est_UM} and \eqref{liminf}, for each case we have
\begin{equation*}
	\begin{aligned}
&c(I)\leq \max_{t\geq 0} I(\zeta^{(n_0)}(t)) \leq \max_{t\geq 0} I_\mathcal{P}(\zeta^{(n_0)}(t) )= I_\mathcal{P}(w^{(n_0)}) \leq c(I),\\
&c(I)\leq \max_{t\geq 0} I(\lambda^{(n_0)}(t)) \leq \max_{t\geq 0} I_\infty(\lambda^{(n_0)}(t) ) = I_\infty (w^{(n_0)}) \leq c(I),
	\end{aligned}
\end{equation*}
where we have used \eqref{replace_improved} to ensure that the paths $\zeta^{(n_0)}$ and $\lambda^{(n_0)}$ belongs to $\Gamma_I.$ Thus, we have that the minimax level $c(I)$ is attained and we can apply \cite[Theorem~2.3]{lins} to obtain the existence of a critical point $u$ for $I_\lambda$ with $I_\lambda(u)=c(I_\lambda).$ If there is no $w^{(n)} \neq 0,$ $n \in \mathbb{N}_0,$ (which is the case where strict inequalities occurs) we can  obtain that $u_k \rightarrow w^{(1)},$ up to a subsequence.
\end{proof}
\section{Proof of Theorem \ref{teo_crit}}\label{s_proof_crit} 
\begin{proof}
The proof will be divided into three steps. We first assume \eqref{suficient_ast} and \eqref{suficient_ast_0}.
(i) We can proceed analogously to the proof of Lemma \ref{geometry}, to see that there exists a sequence $(u_k)$ in $\mathcal{D}^{s,2}(\mathbb{R}^N)$ such that $I _\ast(u_k) \rightarrow c(I_\ast)>0$ and $I' _\ast (u_k) \rightarrow 0.$ Let $(w ^{(n)}),\  (y_k ^{(n)}),\ (j _k ^{(n)})$ be the sequences given by Theorem \ref{teo_tinta_frac} and define the set
\begin{equation*}
\mathbb{N} _{\sharp} = \left\lbrace n \in \mathbb{N} _\ast\setminus\{1\}:|\gamma ^{j_k ^{(n)}}y_k ^{(n)}|\text{ is bounded} \right\rbrace .
\end{equation*}
Passing to a subsequence and using a diagonal argument if necessary, we may assume that each sequence $(\gamma ^{j_k ^{(n)}}y_k ^{(n)}),$ $n \in \mathbb{N} _{\sharp},$ converges with $a^{(n)} := \lim _{k\rightarrow \infty}\gamma ^{j_k ^{(n)}}y_k ^{(n)},$ $n \in \mathbb{N} _{\sharp}.$

(ii) Now we shall prove the following estimate, up to a subsequence
\begin{multline}\label{reasonbefore}
\limsup_k \|u_k \| _V ^2  \geq \|w^{(1)}\|_V ^2 \\+ \sum _{n \in \mathbb{N}_\ast \setminus \mathbb{N} _{\sharp}} [w^{(n)}]_s^2 + \sum _{n \in \mathbb{N} _+\cap\mathbb{N} _{\sharp}} \|w^{(n)}\| _{V_+(\cdot + a^{(n)}-a_\ast) }^2+\sum _{n \in \mathbb{N} _{-}\cap\mathbb{N} _{\sharp}} \|w^{(n)}\| _{V_{-}(\cdot + a^{(n)}-a_\ast) }^2,
\end{multline}
In order to prove this, first consider the operator 
\begin{equation*}
d_k ^{(n)} u = \gamma ^{\frac{N-2s}{2} j_k ^{(n)} } u(\gamma^{j _k ^{(n)}} ( \cdot - y_k^{(n)} ) ),\quad u\in \mathcal{D}^{s,2}(\mathbb{R}^N),\ n \in \mathbb{N}_\ast.
\end{equation*}
For each $n \in \mathbb{N}_\ast,$ let $(\varphi_j ^{(n)})$ in $C^\infty _0 (\mathbb{R}^N)$ such that $\varphi_j ^{(n)} \rightarrow w^{(n)}$ in $\mathcal{D}^{s,2} (\mathbb{R}^N).$ Evaluating
\begin{equation*}
\left\|u_k - \sum _{n \in M_\ast} d_k ^{(n)} \varphi_j ^{(n)} \right\|^2 _V \geq 0,
\end{equation*}
in a finite subset $M_\ast = \{1,\ldots,M\}$ of $\mathbb{N}_\ast,$ we have
\begin{equation}\label{rb_passlimit_ref}
\|u_k\|_V ^2 \geq 2 \sum_{n \in M_\ast} (u_k , d_k ^{(n)} \varphi _j ^{(n)} )_V - \sum_{n \in M_\ast} \|d_k ^{(n)} \varphi _j ^{(n)}\|^2_V.
\end{equation}
We are now going to study the limit in \eqref{rb_passlimit_ref}. Taking
\begin{equation*}
v_k ^{(n)} :=d_k ^{(n)} u_k = \gamma ^{-\frac{N-2s}{2}j_k ^{(n)}}u_k (\gamma ^{- j_k ^{(n)}} \cdot + y_k^{(n)} ),
\end{equation*}
we have
\begin{align*}
(u_k,d_k ^{(n)}\varphi _j ^{(n)}  )_V &= [v_k ^{(n)} ,\varphi _j ^{(n)} ]_s + \int_{\mathbb{R}^N} \gamma ^{-2s j_k ^{(n)}} V(\gamma ^{-j_k ^{(n)}}( (x +y_k ^{(n)}) +a_ \ast )) v_k ^{(n)}(\cdot + a_ \ast) \varphi _j ^{(n)}(\cdot + a_ \ast) \dx,\\
\| d_k ^{(n)}\varphi _j ^{(n)}\|_V ^2 &= [\varphi _j ^{(n)}]_s ^2 + \int_{\mathbb{R}^N}\gamma ^{-2sj_k ^{(n)}} V(\gamma ^{-j_k ^{(n)}}( (x +y_k ^{(n)}) +a_ \ast )) |\varphi _j ^{(n)}(\cdot + a_ \ast)|^2 \dx.
\end{align*}
Fixed $j,$ using \eqref{V_conve_seq_crit}, up to a subsequence we have
\begin{equation}\label{limite_aux_um}
\lim_{k \rightarrow \infty} (u_k,d_k ^{(n)}\varphi _j ^{(n)} )_V =[w^{(n)} ,  \varphi _j ^{(n)}]_s \\ \quad\text{ and }\quad\lim_{k \rightarrow \infty}\| d_k ^{(n)}\varphi _j ^{(n)} \|_V ^2 = [ \varphi _j ^{(n)}]^2_s,
\end{equation}
provided that $n \notin \mathbb{N}_{\sharp}$ (this is the case when $n \in \mathbb{N}_0$). Similarly, up to a subsequence, by \eqref{V_limits_ast},
\begin{equation}\label{limite_aux_dois}
\begin{aligned}
\lim_{k \rightarrow \infty} (u_k,d_k ^{(n)}\varphi _j ^{(n)})_V &=( w^{(n)}, \varphi _j ^{(n)})_{V_\kappa(\cdot + a^{(n)}-a_\ast)} \\
\lim_{k \rightarrow \infty}\| d_k ^{(n)}\varphi _j ^{(n)} \|_V ^2 &= \| \varphi _j ^{(n)}\|^2_{V_\kappa(\cdot + a^{(n)}-a_\ast)},
\end{aligned}
\end{equation}
where $\kappa =+,-,$ whenever $n \in \mathbb{N} _+\cap\mathbb{N} _{\sharp}$ or $\mathbb{N} _{-}\cap\mathbb{N} _{\sharp},$ respectively.  Since $\mathbb{N}_\ast \setminus \{1\}= (\mathbb{N}_\ast \setminus \mathbb{N}_{\sharp}) \dot{\cup} \left[ (\mathbb{N}_{+} \cap \mathbb{N}_{\sharp} ) \dot{\cup } (\mathbb{N}_{-} \cap \mathbb{N}_{\sharp} ) \right],$ up to a subsequence, we can apply the limits \eqref{limite_aux_um} and \eqref{limite_aux_dois} in \eqref{rb_passlimit_ref} to get
\begin{multline}\label{almost_finish_ineq}
\limsup_k\|u_k\|^2_V \geq \|w^{(1)}\|_V ^2 + \sum_{n \in M_\ast \cap\mathbb{N} _+\cap\mathbb{N} _{\sharp}  }2(w^{(n)} , \varphi _j ^{(n)})_{V_+ (\cdot + a^{(n)}-a_\ast)} - \|\varphi _j ^{(n)}\|_{V_+ (\cdot + a^{(n)}-a_\ast)}^2\\+\sum_{n \in M_\ast \cap \mathbb{N} _{-}\cap\mathbb{N} _{\sharp} }2(w^{(n)} , \varphi _j ^{(n)})_{V_{-} (\cdot + a^{(n)}-a_\ast)} - \|\varphi _j ^{(n)}\|_{V_{-} (\cdot + a^{(n)}-a_\ast)}^2 \\+\sum_{ n \in M_\ast \setminus \mathbb{N} _{\sharp}} 2 [w^{(n)} ,\varphi _j ^{(n)} ]_s - [\varphi _j ^{(n)}]^2_s.
\end{multline}
Since $\| \cdot \|_{V_{+}}$ and $\| \cdot \|_{V_{-}}$ are equivalent to $[\ \cdot \ ]_s$ in $\mathcal{D}^{s,2}(\mathbb{R}^N)$ we can take the limit in $j$ in \eqref{almost_finish_ineq} and use the arbitrariness of choice for $M$ to obtain \eqref{reasonbefore}.

(iii) If $w^{(n)}=0$ for all $n\geq 2,$ then $u_k \rightarrow w^{(1)}$ in $\mathcal{D}^{s,2}(\mathbb{R}^N),$ with $w^{(1)}$ being a critical point of $I_\ast.$ Let us argue by contradiction and assume the existence of $w^{(n_0)}\neq 0,$ with $n_0 \geq2 $. By \cite[Proposition 7.1]{paper1} and estimate \eqref{reasonbefore}, up to a subsequence, we have
\begin{equation}\label{crit-right-hand}
c(I_\ast) \geq I_\ast (w^{(1)})+\sum _{n \in \mathbb{N}_\ast \setminus \mathbb{N} _{\sharp}}I_0 (w^{(n)}) + \sum _{n \in \mathbb{N} _+\cap\mathbb{N} _{\sharp}}I^{(n)}_{+}(w^{(n)}) +\sum _{n \in \mathbb{N} _{-}\cap\mathbb{N} _{\sharp}} I^{(n)}_{-}(w^{(n)}),
\end{equation}
where
\begin{equation*}
I_\pm ^{(n)}(u) = \frac{1}{2}\| u \|^2 _{V_\pm(\cdot + a^{(n)}-a_\ast)}  - \int_{\mathbb{R}^N} F_\pm (u)\dx\quad\text{and}\quad I_0(u) = \frac{1}{2}[u]_s^2 - \int_{\mathbb{R}^N} F_0 (u)\dx,\quad u \in \mathcal{D}^{s,2}(\mathbb{R}^N).
\end{equation*}
As before, we prove that each $w^{(n)}$ is a critical point for the functionals in the respective index of the sums in \eqref{crit-right-hand}, and as a consequence of \eqref{A-R}, the right-hand side of \eqref{crit-right-hand} is non-negative. In the next step we obtain that $c(I_\ast ) < I^{(n)}_\kappa (w^{(n)})$ in the correspondent index, which leads to a contradiction with estimate \eqref{crit-right-hand}. In fact, given $\varphi$ in $C^\infty _0 (\mathbb{R}^N),$ as in the proof of \eqref{reasonbefore},
\begin{equation*}
\lim _{k \rightarrow \infty} (u_k,d_k ^{(n)} \varphi )_V = [w^{(n)},\varphi]_s\quad\text{ and }\quad\lim _{k \rightarrow \infty}(u_k,d_k ^{(n)} \varphi )_V=(w^{(n)},\varphi )_{V_\pm(\cdot + a^{(n)}-a_\ast)},
\end{equation*}
provided that $n \in \mathbb{N}_\ast \setminus \mathbb{N} _{\sharp}$ and $n \in \mathbb{N} _\pm \cap\mathbb{N} _{\sharp},$ respectively. Since,
\begin{equation*}
\left| \gamma ^{-\frac{N+2s}{2}j_k ^{(n)}} f\left(\gamma ^{-j_k ^{(n)}}x + y_k ^{(n)} ,\gamma ^{\frac{N-2s}{2} j_k ^{(n)}} t  \right)\varphi \right| \leq C |t|^{2_s ^\ast -1},\quad\forall \,  k,n\text{ and }t,
\end{equation*}
thanks to Lebesgue Theorem, up to a subsequence, taking the limit in $k$ in the following identity
\begin{equation*}
I_\ast' (u_k) \cdot (d_k ^{(n)} \varphi ) = (v_k ^{(n)} , \varphi ) _V
- \int _{\mathbb{R}^N} \gamma ^{-\frac{N+2s}{2}j_k ^{(n)}} f\left(\gamma ^{-j_k ^{(n)}}x + y_k ^{(n)} ,\gamma ^{\frac{N-2s}{2} j_k ^{(n)}} v_k ^{(n)}  \right)\varphi \dx ,
\end{equation*}
we can conclude that $I'_\ast (w^{(1)})=(I_\pm ^{(n)} )'(w^{(n)}) = I_0' (w^{(n)})=0,$ in the corresponding index.

(iv) To conclude the proof, we prove now that $c(I_\ast ) < I^{(n_0)}_\pm (w^{(n_0)})$ or $c(I_\ast ) < I^{(n_0)}_\pm (w^{(n_0)}),$ where $n_0$ belongs to $\mathbb{N}_\ast \setminus \mathbb{N} _{\sharp}$ or $\mathbb{N} _\pm \cap\mathbb{N} _{\sharp}$ respectively. Define the path
$
\zeta^{(n_0)}(t) = t w^{(n_0)},\quad t\geq 0$ if $ n_0 \in \mathbb{N}_\ast \setminus \mathbb{N} _{\sharp}$ and $
\zeta^{(n_0)}(t) = t w^{(n_0)}(\cdot +a_\ast- a^{(n)}),\quad t\geq 0$ if $ n_0 \in \mathbb{N} _\pm \cap\mathbb{N} _{\sharp}.
$
Using \eqref{suficient_ast}--\eqref{suficient_ast_0} and Remark \ref{r_minimax}--(iv) we have that $\zeta^{(n_0)}$ belongs to $\Gamma_I$ with
\begin{equation*}
\begin{aligned}
&c(I_\ast) \leq \max _{t \geq 0}I_\ast (\zeta^{(n_0)}(t)) <I_0(\zeta^{(n_0)}(\bar{t}))\leq \max _{t \geq 0} I_0( \zeta^{(n_0)} (t) ) = I_0 (w^{(n_0)}),\quad\text{if } n_0 \in \mathbb{N}_\ast \setminus \mathbb{N} _{\sharp}.\\
&c(I_\ast) \leq \max _{t \geq 0}I_\ast (\zeta^{(n_0)}(t)) < I_\pm ^{(n)}( \zeta^{(n_0)} (\bar{t}) ) \leq \max _{t \geq 0} I_\pm ^{(n)}( \zeta^{(n_0)} (t) ) = I_\pm ^{(n)} (w^{(n_0)}),\quad\text{if } n_0 \in \mathbb{N} _\pm \cap\mathbb{N} _{\sharp},
\end{aligned}
\end{equation*}
 which together with \eqref{crit-right-hand} leads to a  contradiction ($\bar{t}$ is the maximum of $I_\ast (\zeta^{(n_0)}(t))$).

(v) We now assume only \eqref{suficient_ast}. Arguing in as before, we can prove 
$
u_k \rightarrow w^{(1)}\text{ in a subsequence}$ or $ c(I_\ast) = \max_{t \geq } I_\ast (\zeta^{(n_0)}(t)).
$
If the minimax level $c(I_\ast)$ is attained,  we apply \cite[Theorem~2.3]{lins} to obtain the existence of a critical point $u\in \zeta^{(n_0)} ([0, \infty))$ such that $I_\ast(u) = c(I_\ast).$
\end{proof}

{\it Acknowledgments.} {Research supported by INCTmat/MCT/Brazil, CNPq and CAPES/Brazil}


\end{document}